\definecolor{violet}{rgb}{0.0,0.2,0.7}
\definecolor{rouge2}{rgb}{0.8,0.0,0.2}
 \theoremstyle{plain}    
 \newtheorem{thm}{Theorem}[section]
\theoremstyle{plain} 
\newtheorem{bigthm}{Theorem}
\newtheorem{bigprop}[bigthm]{Proposition}
 \numberwithin{equation}{section} 
 \numberwithin{figure}{section} 
 \newtheorem{cor}[thm]{Corollary} 
 \theoremstyle{plain} 
 \theoremstyle{plain}    
 \newtheorem{prop}[thm]{Proposition} 
 \theoremstyle{plain}    
 \newtheorem{lem}[thm]{Lemma} 
 \theoremstyle{remark}
  \newtheorem{claim}[thm]{Claim}
 \theoremstyle{remark}
 \newtheorem{rem}[thm]{Remark}
 \theoremstyle{definition}
 \theoremstyle{definition}
 \theoremstyle{definition}
\theoremstyle{remark}  
\newtheorem{set}[thm]{Setup}
\theoremstyle{plain}
\theoremstyle{definition}
\newtheorem{defi}[thm]{Definition}
\newtheorem{theoremapp}{Theorem}
\newtheorem{lemma_app}[theoremapp]{Lemma}
\newtheorem{proposition_app}[theoremapp]{Proposition}
\newtheorem{remark_app}[theoremapp]{Remark}
\newcommand{\C}{{\mathbb{C}}}
\newcommand{\Q}{{\mathbb{Q}}}
\newcommand{\cC}{{\mathcal{C}}}
\newcommand{\cE}{{\mathcal{E}}}
\newcommand{\cF}{{\mathcal{F}}}
\newcommand{\cJ}{{\mathcal{J}}}
\newcommand{\cQ}{{\mathcal{Q}}}
\def\1{\mathbf{1}}
\newcommand{\fA}{{\mathfrak{A}}}
\newcommand{\cK}{{\mathcal{K}}}
\newcommand{\ol}{\overline}
\newcommand{\wX}{\widehat{X}}
\newcommand{\holom}[3]{\ensuremath{#1:#2  \rightarrow #3}}
\newcommand{\om}{\omega}
\newcommand{\ome}{\omega_{\ep}}
\newcommand{\wom}{\widehat \omega}
\newcommand{\Xr}{X_{\rm reg}}
\newcommand{\Xs}{X_{\rm sing}}
\newcommand{\vp}{\varphi}
\newcommand{\ep}{\varepsilon}
\newcommand{\Ric}{\mathrm{Ric} \,}
\newcommand{\Tr}{\mathrm{Tr}}
\newcommand{\Id}{\mathrm{Id}}
\newcommand{\ddbar}{\d\dbar}
\renewcommand{\d}{\partial}
\newcommand{\dbar}{\overline{\partial}}
\renewcommand{\ge}{\geqslant}
\renewcommand{\le}{\leqslant}
\newcommand{\supp}{\operatorname{supp}}
\newcommand{\codim}{\operatorname{codim}}
\newcommand{\PSH}{\operatorname{PSH}}
\title{Bogomolov-Gieseker inequality for log terminal Kähler threefolds}
\date{\today}
\author{Henri Guenancia}
\address{Institut de Mathématiques de Toulouse; UMR 5219, Université de Toulouse; CNRS, UPS, 118 route de Narbonne, F-31062 Toulouse Cedex 9, France.}
\email{henri.guenancia@math.cnrs.fr}
\author{Mihai P\u{a}un}
\address{Universit\"at Bayreuth, Mathematisches Institut, Lehrstuhl Mathematik VIII, Komplexe Analysis und Differentialgeometrie, Universit\"atsstrasse 30, D-95447, Bayreuth, Germany.}
\email{mihai.paun@uni-bayreuth.de}
\begin{document}
 
\begin{abstract}
In this article we prove the orbifold version of the Bogomolov-Gieseker inequality for stable $\Q$-sheaves on log terminal Kähler threefolds. 
\end{abstract}

\maketitle

\tableofcontents

\section{Introduction} 
\label{Intro}

\subsection{Bogomolov-Gieseker inequality}

Let $(X,\omega_X)$ be a compact Kähler manifold of dimension $n$ and let $\cF$ be a vector bundle of rank $r$ which is stable with respect to the Kähler class $[\omega_X]$. By Donaldson-Uhlenbeck-Yau theorem \cite{Don85,UY}, $\cF$ admits an Hermite-Einstein metric with respect to $\omega_X$, hence the Bogomolov-Gieseker discriminant 
\[\Delta(\cF):=2r c_2(\cF)-(r-1) c_1(\cF)^2\] satisfies
\begin{equation}
\label{BG}
\Delta(\cF)\cdot [\omega_X]^{n-2}\ge 0.
\end{equation}
Moreover, the equality case happens if and only if $\cF$ is projectively flat. The inequality \eqref{BG} was generalized by Bando and Siu \cite{BS94} to the case where $\cF$ is merely a coherent reflexive sheaf as they proved the existence of an "admissible" Hermite-Einstein metric for $\cF$ on the locus where it is locally free. A simpler, alternative argument amounts to appealing to Rossi's desingularization result \cite{Rossi68} and using the fact that stability is an open condition. Note that when $X$ is projective and $[\omega_X]=c_1(H)$ is the class of an ample line bundle, then one is reduced to the statement on a surface for which the inequality was first proved, cf \cite{MP97} and references therein. \\

In this article, we are interested in generalizations of \eqref{BG} when $X$ is allowed to be singular. A thorny issue that arises when dealing with singular varieties is the definition of Chern classes of coherent sheaves. More precisely, several notions exist that are not equivalent, already when $\cF=T_X$.  When $X$ is normal and has finite quotient singularities in codimension two (i.e. $X$ is an orbifold away from a set of codimension at least three), one can rely on the notions of orbifold Chern classes to define the first two Chern {\it numbers}
\[c_1(\cF)^2 \cdot [\om_X]^{n-2} \quad \mbox{and} \quad c_2(\cF)\cdot [\omega_X]^{n-2}\]
in a satisfying way provided that $\cF$ is an orbifold {\it bundle} in codimension two \-- sometimes referred to as $\cF$ being a $\mathbb Q$-sheaf. This goes back to Mumford when $X$ is projective and $[\omega_X]=c_1(L)$ for some line bundle $L$ \cite{MR717614} relying on the existence of suitable Galois covers while Graf and Kirschner \cite{GK20} gave a purely analytic definition which they showed coincides with Mumford's when $X$ is algebraic and $[\omega_X]\in \mathrm{NS}_{\mathbb Q}(X)$. 

A folklore result established by \cite[Proposition~9.3]{GKKP} asserts that normal complex quasi-projective varieties with log terminal singularities have quotient singularities in codimension two, hence it makes sense to consider the first two Chern numbers associated to a $\mathbb Q$-sheaf on such a variety. The proof extends almost verbatim to the complex analytic setting as explained in \cite[Lemma~5.8]{GK20}. 

Our main theorem goes as follows. 

\begin{bigthm}
\label{thmA}
Let $(X,\omega_X)$ be a compact Kähler variety of dimension three with log terminal singularities. Let $\cF$ be a reflexive $\mathbb Q$-sheaf on $X$ which is stable with respect to $[\omega_X]$. Then the Bogomolov-Gieseker inequality holds for $\cF$, i.e. 
\[\Delta(\cF) \cdot [\omega_X] \ge 0.\]
In the equality case, $\cF|_{X_{\rm reg}}$ is a projectively flat vector bundle. 
\end{bigthm}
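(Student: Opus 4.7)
The strategy I would pursue parallels the smooth case: produce a Hermite--Einstein metric for $\cF$ on a large open subset of $X$, then invoke a Chern--Weil identity that expresses $\Delta(\cF) \cdot [\om_X]$ as the integral of a non-negative quantity. Two new ingredients are needed here: an admissible HE metric compatible with the orbifold structure, and a Chern--Weil formula that computes the \emph{orbifold} Chern numbers (rather than any ``naive'' analytic ones).

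As a first step, the introduction recalls that since $\dim X = 3$ and $X$ is log terminal, \cite{GKKP}, \cite{GK20} provide a finite set $\Sigma \subset X$ outside of which $X$ is an orbifold and $\cF$ is an orbifold vector bundle. On $X \setminus \Sigma$ I would construct an admissible Hermite--Einstein metric $h$ for $\cF$ with respect to $\om_X$. Locally on a quasi-\'etale Galois cover $\wt U \to U$ trivializing the orbifold structure of $\cF$, one has a genuine Kähler form and an equivariant vector bundle, so the classical Donaldson--Uhlenbeck--Yau theorem \cite{Don85,UY} applies once $\om_X$-stability is shown to pass to the cover. Globally, I would use an approximation scheme: choose a resolution $\pi : \wt X \to X$, work with the Kähler classes $\om_\ep := \pi^*\om_X + \ep \wt\om$, solve DUY for a suitable reflexive approximation of $\pi^{[*]}\cF$ on $\wt X$, and pass to the limit $\ep \to 0$ using Bando--Siu-type estimates \cite{BS94} to obtain an admissible HE metric on $\cF|_{X\setminus\Sigma}$ with $L^2$ curvature.

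The Chern--Weil step would then proceed as follows. By \cite{GK20}, the orbifold Chern numbers $c_1(\cF)^2 \cdot [\om_X]$ and $c_2(\cF) \cdot [\om_X]$ are computed by integrating the Chern--Weil representatives of $h$ over $\Xr \setminus \Sigma$. Combined with the HE equation $\sqrt{-1}\,\Lambda_{\om_X} F_h = \lambda \cdot \mathrm{Id}_\cF$, this yields the identity
\[
\Delta(\cF) \cdot [\om_X] \;=\; c_{n,r}\int_{X\setminus\Sigma} \bigl| F_h^\perp \bigr|^2_{h,\om_X}\, \om_X^n \;\ge\; 0,
\]
where $F_h^\perp := F_h - \tfrac{\tr F_h}{r} \cdot \mathrm{Id}_\cF$ is the trace-free curvature and $c_{n,r}>0$. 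The codimension-$6$ set $\Sigma$ does not contribute, thanks to the $L^2$-integrability of $F_h$. In the equality case, $F_h^\perp \equiv 0$ on $X \setminus \Sigma$, so $\cF|_{X\setminus\Sigma}$ is projectively flat; reflexivity of $\cF$ together with $\codim_X \Sigma \ge 3$ then extends projective flatness to all of $\Xr$.

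The main obstacle is the construction in the second step: the approximation must respect the orbifold structure and deliver enough regularity so that Chern--Weil on the limit metric recovers the orbifold Chern numbers. Uniform control of $\log \det h$ and of $\int |F_h|^2 \om_X^n$ as $\ep \to 0$ must be extracted, extending the Bando--Siu analysis to this singular orbifold setting. The hypothesis $\dim X = 3$ enters crucially here, because it forces $\Sigma$ to be a finite set: cutoff, capacity and integrability arguments near $\Sigma$ are then essentially one-dimensional in complex codimension, which is what makes the convergence of the Chern--Weil integrals tractable.
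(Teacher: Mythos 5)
Your outline captures the broad contour (Hermite--Einstein metric plus Chern--Weil), but it elides precisely the steps that are the actual content of this proof, and as written the argument would not close. First, you take $\omega_X$ as the reference metric for the HE equation, whereas the paper works with the interpolating metric $\omega_\theta$ of \eqref{chern4}: a metric that is genuinely orbifold Kähler on $X\setminus U$ and agrees with a multiple of $\omega_X$ near the non-orbifold points $\{p_1,\ldots,p_k\}$. This is not a cosmetic choice. The orbifold Chern numbers are defined (cf.\ \eqref{chern44}) via Chern--Weil forms of an \emph{orbifold} metric on $\cF$, so to identify the Chern--Weil integral of the HE metric with $\Delta(\cF)\cdot[\omega_X]$ one must know that $h_\cF$ is quasi-isometric to an orbifold metric. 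That is exactly Theorem~\ref{quasi orbi}, and it is not automatic from the construction: it requires a Harnack-type $L^\infty$ estimate for $\omega_\theta$ (Corollary~\ref{LI estimates theta}), which in turn rests on the Green-function estimates of \cite{GPSS22} together with the cut-off construction of Lemma~\ref{cutoff}. Your proposal asserts orbifold compatibility of the limiting metric as a feature of a Bando--Siu limit, but there is no argument supplied, and with reference metric $\omega_X$ (whose orbifold pullback is degenerate along the ramification locus) this would in fact fail.

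Second, the line ``the codimension-$6$ set $\Sigma$ does not contribute, thanks to the $L^2$-integrability of $F_h$'' hides the hardest part. What must be shown is that the boundary terms near $\Sigma=\{p_i\}$ vanish in the integration by parts which replaces the reference orbifold Chern--Weil form by $\Delta(\cF,h_\cF)$, and this requires $\dbar$-closedness of the $(2,2)$-current $\Delta(\cF,h_\cF)$ on the orbifold locus (Theorem~\ref{end,I}), proved via delicate cut-off estimates (Lemma~\ref{flem1}, Lemma~\ref{flem2}). The $L^2$ bound on curvature alone controls $\int_{X_0}\Delta(\cF,h_\cF)\wedge\eta$ but not the commutator terms $\int\Delta(\cF,h_\cF)\wedge\partial\chi_\ep\wedge\dbar(\cdot)$. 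There is also a subtlety in the last step (\textsection~\ref{ssec 5}): one needs $\omega_X+dd^c\phi$ to vanish near the $p_i$'s and a potential with $|\partial\varphi_p|=\mathcal{O}(|Z|)$ so that \eqref{chern8} holds; working naively with $\omega$ or $\omega_X$ would require $L^\infty$ rather than $L^2$ control of the curvature, as the Remark after \eqref{chern8} explains. In short, your strategy is the right skeleton, but the flesh --- the interpolating metric $\omega_\theta$, the Green-function Harnack estimate, the quasi-isometry with an orbifold metric, and the $\dbar$-closedness of $\Delta(\cF,h_\cF)$ --- is missing, and these are the genuine contributions that make the argument work.
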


Theorem~\ref{thmA} confirms a conjecture by Campana-Höring-Peternell. Actually the main motivation for Theorem \ref{thmA} is the proof of the abundance conjecture for K\"ahler threefolds. We refer to \cite{CHPad}, and more particularly to the appendix of this paper for a complete discussion of the relevance of our result in this framework.

In the equality case, we also get that the $\mathbb P^{r-1}$ bundle $\mathbb P(\cF|_{X_{\rm reg}})$ extends to a locally trivial projective bundle after a finite quasi-étale cover of $X$, cf Corollary~\ref{equality}.\\

\noindent
$\bullet$ {\bf Comparison with earlier work.}

$\circ$ When $X$ is smooth in codimension two (of any dimension), then the Chern numbers can be computed on a resolution of $X$ and the Bogomolov-Gieseker inequality is a rather simple consequence of its version for manifolds, cf e.g. \cite[Proposition~3.4]{CGG} and references therein. 

$\circ$ When $X$ is projective and $[\omega_X]=c_1(H)$ for some ample divisor $H$, then one is reduced to the classic inequality for stable orbifold bundles on an orbifold surface. 

$\circ$ When $X$ admits an orbifold resolution (e.g. when $X$ is projective \cite{LiTian19}), \cite{C++} have showed that the inequality holds, relying on the construction of singular HE metrics. 

$\circ$ When $X$ is Kähler of any dimension, \cite{Ou} has showed the existence of a partial orbifold resolution, and hence the inequality, cf. \cite{C++}.

$\circ$ For closely related results and references we refer to the interesting recent articles \cite{CWcomplex} and \cite{Chen22}. 
\\

\noindent
$\bullet$ {\bf A few words about the proof of Theorem~\ref{thmA}.}
Our arguments are relying on results and ideas from \cite{C++} (see Section 5 of \emph{loc. cit}), i.e. using an "admissible" Hermite-Einstein metric for $\cF$ to compute its Chern numbers. The key difference is that here we are able to analyze further the geometric properties of $(X, \omega_{\theta})$, where
$\omega_{\theta}$ is a singular K\"ahler metric on $X$ having orbifold singularities in the complement of an open subset containing the finite set of non-quotient singularities of $X$, thanks to the key Proposition~\ref{propC} below, itself relying on \cite{GPSS22}. The existence of a metric $h_\cF$ on $\cF$ which verifies the Hermite-Einstein equation with respect to $\omega_{\theta}$ together with important estimates is a result from \cite{C++}. In addition, here we establish the crucial intersection properties of the current 
$\Delta(\cF, h_\cF)$ which lead to the proof of Theorem \ref{thmA}. Also, most of the new arguments we supply in the last section hold in higher dimension. The main reason why Theorem \ref{thm1} is formulated and proved in dimension three only is explained in Remark \ref{higher}.  We refer to the few lines above of Section~\ref{ssec 1} for a more detailed plan of the proof of Theorem~\ref{thmA}.  \\

\subsection{Canonical metrics on Kähler spaces with log terminal singularities}
Let $(X,\omega_X)$ be a compact Kähler space with log terminal singularities. A natural question is whether one can construct positive currents of the form $\omega:=\omega_X+dd^c \varphi$ that reflect the singularities of $X$. For instance, one would like $\omega$ to be an orbifold Kähler metric near any quotient singularity of $X$, in the sense that the pull-back of $\omega$ in any local, smooth uniformizing chart should be a Kähler metric. Coming up with explicit candidates $\omega$ seems to be a hard task, cf. Section~\ref{sec direct} for a few comments and very particular results in this direction. 

Since $X$ is klt, $K_X$ is a $\Q$-line bundle and given any smooth hermitian metric on $K_X$, one can construct a positive measure $\mu_h$ which is a smooth volume form on $X_{\rm reg}$ and puts no mass on $X_{\rm sing}$, cf Section~\ref{sec canonical}. Next, it has been showed by \cite{EGZ09} that up to renormalizing $\mu_h$, there is a unique positive current $\omega=\omega_X+dd^c \varphi$ with $\varphi \in \mathrm{PSH}(X, \omega_X) \cap L^{\infty}(X)$ solving 
\begin{equation}
\label{monge}
\omega^n=\mu_h.
\end{equation}
Moreover, $\omega|_{X_{\rm reg}}$ is a Kähler metric. For instance, if $c_1(K_X)=0$ and the Chern curvature of $h$ vanishes, then $\omega|_{X_{\rm reg}}$ is Ricci-flat. 

In general, the geometry of $(X_{\rm reg}, \omega)$ near $X_{\rm sing}$ remains quite mysterious, and one of the very first questions to address is the behavior of $\omega$ near quotient singularities, which are the generic singularities in codimension two by a result of  \cite{GKKP}. It was proved in \cite{LiTian19} that $\omega$ is an orbifold metric when $X$ is projective. We give in Section~\ref{sec projective} a simplified proof of that result based on Artin's approximation theorem. In the Kähler case, this is not known in full generality, cf Remark~\ref{rem Kahler}.  \\


%

%
%
%

\noindent
$\bullet$ {\bf Connection with the work of Guo-Phong-Song-Sturm \cite{GPSS22}.}
As explained above, very few general results are known about the geometry of $(X_{\rm reg}, \omega)$. Recently though, Guo-Phong-Song-Sturm studied the properties of Green kernels of compact Kähler manifolds in the impressive series of articles \cite{GPSS22,GPSS23}, which have their origin in \cite{CC21, GPT23} and ultimately enabled them to derive diameter bounds for $(X_{\rm reg}, \omega)$. 
From our point of view, the strength of the results in \emph{loc. cit.} is that they are established without any sort of assumption on the Ricci curvature, i.e. no $\cC^2$ bounds for the volume element is required, not even one-sided.

As we explain in Section~\ref{sec Green}, their estimates allow one to construct and finely control Green's functions for the incomplete manifold $(X_{\rm reg}, \omega)$. However, a roadblock to further exploit these functions seems to be related to validity of the following property. \\

\noindent
{\bf Property (P).} 
\label{propP}
$X_{\rm sing}$ admits a family of cut-off functions $(\rho_\delta)$ such that there exist a constants $\ep>0$ independent of $\delta$ satisfying 
\[\limsup_{\delta \to 0} \int_X \big(|\nabla \rho_\delta|^{2+\ep}_\omega+|\Delta_\omega \rho_\delta|^{1+\ep}\big) \, \omega^n =0.\]

One of our main technical result is the following statement:

\begin{bigprop}
\label{propC}
Let $X$ be a compact Kähler variety and let $\omega$ solve \eqref{monge}. Assume that Property (P) is satisfied. There exists $p>1$ such that for any non-negative function $f\in C^2(X_{\rm reg})$, we have 
\[ f+(\Delta_\omega f)_- \in L^p \quad \Longrightarrow f\in L^{\infty}(X_{\rm reg}).\]
\end{bigprop}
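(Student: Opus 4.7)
The plan is to establish a Green's function representation for $f$ on the incomplete Kähler manifold $(X_{\rm reg}, \omega)$ using the kernel estimates of \cite{GPSS22}, and to conclude via Hölder's inequality. Property~(P) enters precisely to justify the integration by parts near $X_{\rm sing}$.

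The first step is to construct, as sketched in Section~\ref{sec Green}, a nonnegative Green's function $G(x, \cdot)$ on $X_{\rm reg}$ for the operator $-\Delta_\omega$, normalized so that $-\Delta_\omega G(x, \cdot) = \delta_x - V^{-1}\omega^n$ with $V = \int_X \omega^n$, and satisfying a uniform integrability bound $\sup_{x \in X_{\rm reg}} \|G(x, \cdot)\|_{L^q(\omega^n)} \le C$ for some $q > 1$ depending only on the data. This is a direct consequence of the Green kernel estimates of Guo-Phong-Song-Sturm.

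The heart of the argument is to establish the one-sided representation
\[ f(x) \le V^{-1} \int_X f\, \omega^n + \int_X G(x, y)\, (\Delta_\omega f)_-(y)\, \omega^n(y), \quad x \in X_{\rm reg}. \]
Formally this comes from the identity $f = V^{-1}\!\int f\,\omega^n - \int G\, \Delta_\omega f\, \omega^n$ by discarding the nonnegative contribution $\int G\,(\Delta_\omega f)_+\, \omega^n$, which we only know is $\ge 0$ (possibly $+\infty$). To justify it rigorously, I would apply the smooth Green representation formula to $f\rho_\delta$, where $\rho_\delta$ are the cut-offs from Property~(P), split $\Delta_\omega f = (\Delta_\omega f)_+ - (\Delta_\omega f)_-$, use monotone convergence for the two nonnegative pieces $\int G\rho_\delta(\Delta_\omega f)_\pm\,\omega^n$, and let $\delta \to 0$. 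This produces two correction terms
\[ 2\int_X G(x,\cdot)\,\nabla f \cdot \nabla \rho_\delta\, \omega^n \qquad \text{and} \qquad \int_X G(x,\cdot)\, f\, \Delta_\omega \rho_\delta\, \omega^n \]
which must be shown to vanish in the limit. The second one is controlled by Hölder's inequality combining the $L^q$ bound on $G$, the $L^p$ integrability of $f$, and the decay $\|\Delta_\omega \rho_\delta\|_{L^{1+\ep}} \to 0$; the first requires a Caccioppoli-type bound on $\nabla f$ localized by $G$, paired against $\|\nabla \rho_\delta\|_{L^{2+\ep}} \to 0$. Matching the three-factor Hölder exponents pins down how large $p$ must be chosen, in terms of $q$ and $\ep$.

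Once the one-sided representation is in hand, the conclusion follows immediately: Hölder's inequality with $1/q + 1/q' = 1$ yields
\[ f(x) \le V^{-1}\|f\|_{L^1} + \|G(x, \cdot)\|_{L^q}\,\|(\Delta_\omega f)_-\|_{L^{q'}} \le C\bigl(\|f\|_{L^p} + \|(\Delta_\omega f)_-\|_{L^p}\bigr) \]
as soon as $p \ge q'$, giving the desired $L^\infty$ bound. The main obstacle is therefore the limit passage in the cut-off identity: the one-sided nature of the hypothesis means one cannot appeal to standard Stampacchia truncations, and one really has to carry the Green's kernel through the correction terms, balancing the integrability of $G$, $f$ and $\nabla f$ against the quantitative decay of $\rho_\delta$ provided by~(P). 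This is the reason why $p$ must be allowed to be sufficiently large, and why the exponents $2+\ep$ and $1+\ep$ appear in the formulation of Property~(P).
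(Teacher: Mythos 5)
Your overall framework matches the paper's: a Green-function representation on $X_{\mathrm{reg}}$ with cut-offs $\rho_\delta$ from Property~(P), the one-sided use of $(\Delta_\omega f)_-$ (discarding the favourably-signed term $\int G\,\rho_\delta\,(\Delta_\omega f)_+$), and Hölder to close. The sign convention for $G$, the normalization, the correction terms $\int G\,\nabla f\cdot\nabla\rho_\delta$ and $\int G\,f\,\Delta_\omega\rho_\delta$, and the way the exponents of Property~(P) must mesh with the $L^q$ bound on $G$ and the $L^p$ hypothesis on $f$ — all of that is the same.

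However, your treatment of the first correction term $\int G\,\nabla f\cdot\nabla\rho_\delta\,\omega^n$ has a genuine gap. You propose to control it by a ``Caccioppoli-type bound on $\nabla f$ localized by $G$'', but the hypothesis of the Proposition gives information only on $f$ and $(\Delta_\omega f)_-$, not on $\nabla f$; a Caccioppoli inequality localized by an unbounded weight like $G$, and strong enough to pair against $\|\nabla\rho_\delta\|_{L^{2+\ep}}$, is neither available nor sketched, and it is quite unclear that it could be produced from the given data. The paper avoids the issue entirely: it integrates by parts \emph{once more}, writing $\int G_x\langle\nabla f,\nabla\rho_\delta\rangle = -\int G_x f\,\Delta_\omega\rho_\delta - \int f\,\langle\nabla G_x,\nabla\rho_\delta\rangle$, so that $\nabla f$ is traded for $\nabla G_x$, which \emph{is} controlled by the kernel estimates (G.3)--(G.4); it then tests against a smooth compactly supported $\tau$, uses Cauchy--Schwarz with the weight $(-G_x)^{1+\ep}$, and crucially exploits the symmetry $G(x,y)=G(y,x)$ to unwind the resulting double integral and bring in $\sup_y\|G_y\|_{L^{1+\ep}}$. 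This extra integration by parts plus the symmetry trick is the missing idea in your proposal, and it is precisely why the hypothesis can be stated in terms of $(\Delta_\omega f)_-$ alone with no assumption on $\nabla f$.
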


In the statement above, we use the notation $h_-:=\max\{-h,0\}$ for a given real-valued function $h$. \\

We do not know if Property (P) is satisfied for the metric $\omega$, even just locally on the orbifold locus. If it were, then one could prove that $\omega$ is an orbifold metric on that locus. However, if one replaces $\omega$ by a current obtained by suitably gluing an orbifold metric to a Kähler metric, then one can show that Property (P) is satisfied locally near the orbifold locus for that metric (cf Lemma~\ref{cutoff}) and therefore get a Harnack type estimate similar to Proposition~\ref{propC} above, cf Corollary~\ref{LI estimates theta}. This turns out to be a key step in the proof of Theorem~\ref{thmA}

\subsection*{Acknowledgments} It is our pleasure and great honour to dedicate this article to D.H. Phong, whose contributions in complex geometry are like noble wine, getting better and better as time goes by. The authors would also like to thank V. Guedj for enlightening discussions about canonical metrics and Chung-Ming Pan for pointing out an error in an earlier version. Thanks also go to R. Wentworth for sharing some interesting references in connection with the topics discussed in this article. Finally, we would like to thank the referees for their suggestions that helped improve the article. H.G. is partially supported by the French Agence Nationale de la Recherche (ANR) under reference ANR-21-CE40-0010 (KARMAPOLIS). MP is gratefully acknowledge the support from the Deutsche Forschungsgemeinschaft (DFG).

\section{Canonical metrics on log terminal Kähler spaces} 
\label{sec canonical}

Let $X$ be a compact normal Kähler space. 

\begin{defi}\label{def1}
A Kähler form on $X$ is a Kähler form $\omega_X$ on $\Xr$ such that $\omega_X$ extends to a Kähler form on $\mathbb C^N$ given any local embedding $X\underset{\rm loc}{\hookrightarrow} \mathbb C^N$. 
\end{defi}

Assume that the rank one reflexive sheaf $mK_X:=((\det \Omega_X^1)^{\otimes m})^{**}$ is locally free for some integer $m\ge 1$. It makes sense to consider a smooth hermitian metric $h$ on $K_X$ as well as a local generator $\sigma$ of $mK_X$. Then  
\[\mu_h:=i^{n^2} \frac{(\sigma\wedge \bar \sigma)^{\frac 1m}}{|\sigma|^{2/m}_{h^{\otimes m}}}\]
defines a positive measure on $\Xr$ which is independent of the choice of $m$ or $\sigma$. We extend it to $X$ trivially. We have $\mu_h(X)<+\infty$ if and only if $X$ has log terminal singularities. From now on, we assume that $X$ has log terminal singularities. \\

 Given a Kähler form $\omega_X$, we set $V:=\int_X \omega_X^n = [\omega_X]^n$ and rescale $h$ so that $\mu_h(X)=V$. We consider the Monge-Ampère equation
\begin{equation} 
\tag{MA}
\label{MA}
(\omega_X+dd^c \varphi)^n=\mu_h,
\end{equation}
for $\varphi \in \PSH(X, \omega_X)\cap L^{\infty}(X)$ normalized so that $\sup_X \varphi=0$. \\

It was proved in \cite{EGZ09} that \eqref{MA} admits a unique solution $\varphi$. Moreover, the positive current $\omega:=\omega_X+dd^c \varphi$ induces a Kähler metric on $\Xr$, satisfying
\begin{equation} 
\label{KE}
\Ric \omega=-\Theta_h(K_X).
\end{equation}

\medskip

The resolution of \eqref{MA} goes as follows. Let $\pi: \wX\to X$ be a log resolution of singularities of $X$ and let $E=\sum_{i\in I} E_i$ be the exceptional divisor of $\pi$, which has simple normal crossings. One can write $K_{\wX}=\pi^\star K_X+\sum a_i E_i$ for some $a_i\in \mathbb Q$ and the klt condition translates into $a_i>-1$ for every $i$. Let $s_i$ be a section of $\mathcal O_{\wX}(E_i)$ cutting out $E_i$ and let $h_i$ be a smooth hermitian metric on $\mathcal O_{\wX}(E_i)$. We write $|s_i|^2:=|s_i|^2_{h_i}$. There exists a 
smooth volume form $dV_{\wX}$  on $\wX$ such that the pull back measure $\pi^\star\mu_h$ (which is well-defined) satisfies $\pi^\star\mu_h=\prod_{i\in I} |s_i|^{2a_i} dV_{\wX}$. Solving \eqref{MA} is then equivalent to solving the degenerate complex Monge-Amp\`ere equation
\begin{equation} 
\label{MAres}
(\pi^\star\omega_X+dd^c \psi)^n=\prod_{i\in I} |s_i|^{2a_i} dV_{\wX},
\end{equation}
where $\pi^\star \omega_X$ is semipositive but degenerate along the exceptional locus of $\pi$. Let $\omega_{\wX}$ be a Kähler form on $\wX$. For any $\ep>0$, the form $\pi^\star\omega_X+\ep \omega_{\wX}$ is Kähler, with volume $V_\ep=[\pi^\star\omega_X+\ep \omega_{\wX}]^n$. By Yau's theorem \cite{Yau}, there exists a unique function $\psi_\ep \in \PSH(\wX,  \pi^\star\omega_X+\ep \omega_{\wX})\cap \mathcal C^{\infty}(\wX)$  solution of 
\begin{equation} 
\label{MAep}
(\pi^\star\omega_X+\ep \omega_{\wX}+dd^c \psi_\ep)^n=c_\ep \prod_{i\in I} (|s_i|^{2}+\ep^2)^{a_i} dV_{\wX},
\end{equation}
such that $\sup_{\wX} \psi_\ep = 0$, where $c_\ep=V_\ep/V$. We set
\[\omega_\ep:=\pi^\star\omega_X+\ep \omega_{\wX}+dd^c \psi_\ep.\] 
A key feature of this approximation is that 
\begin{equation}
\label{Lp unif}
\exists p>1, C>0; \quad \left\|\frac{\ome^n}{dV_{\wX}}\right\|_{L^p(dV_{\wX})} \le C.
\end{equation}
One of the consequences of \eqref{Lp unif} is that there exists $C>0$ independent of $\ep$ such that $\|\psi_\ep\|_\infty \le C$. Moreover, given any compact subset $K\Subset \Xr$ and any integer $k\ge 1$, there exists $C_{K,k}$ such that $\|\psi_\ep\|_{C^k(K)}\le C_{K,k}$. From this one deduces easily that $\omega_\ep$ converges to a current $\pi^\star\omega$ such that $\omega$ is a Kähler metric on $\Xr$ and the convergence $\omega_\ep \to \pi^\star\omega$ is locally smooth on $\pi^{-1}(\Xr)$. 

\section{Construction and properties of Green's functions}
\label{sec Green}
With the notations from the previous section, let $x\in \wX$ and let $G_x^\ep$ be the Green function associated to $(\wX, \omega_\ep)$. That is, it is the unique solution of 
\[\Delta_{\omega_\ep} G_x^\ep= \delta_x-\frac{1}{V_\ep}, \quad \mbox{and} \quad \sup_{\wX} G_x^\ep=-1.\]
The function $G_x^\ep$ belongs to $W^{1,p}(\wX)$ for any $p<\frac{2n}{2n+1}$, and for any function $f\in \mathcal C^{\infty}(X)$, one has 
\[f(x)=\frac{1}{V_\ep}\int_X f\omega_\ep^n+\int_X G_x^\ep \Delta_{\omega_\ep} f \omega_\ep^n.\]
 It also implies that if $f$ is merely in $W^{1,q}$, then $f(x)=\int_X f\omega_\ep^n-\int_X \langle \nabla G_x^\ep, \nabla f \rangle \omega_\ep^n.$
 
 The following result is a consequence of the breakthrough result recently obtained by Guo-Phong-Song-Sturm \cite[Proposition~5.1]{GPSS22}: 
 
 \begin{thm}
 \label{GPSS}
 There exists $C,\delta>0$ independent of $\ep$ such that for any $x\in \wX$
 \[\int_X(-G_x^\ep)^{1+\delta} \ome^n+\int_X|\nabla G_x^\ep|^{1+\delta} \ome^n\le C.\]
 Moreover, for any $\beta>0$, there exists $C_\beta>0$ independent of $\ep$ such that for any $x\in \wX$
\[ \int_X\frac{|\nabla G_x^\ep|^{2}}{(-G_x^\ep)^{1+\beta}} \ome^n\le C_\beta.\]
 \end{thm}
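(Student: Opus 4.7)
The strategy is to apply the uniform Green's function estimates of \cite[Proposition~5.1]{GPSS22} to the smooth family $(\wX, \ome)_{\ep > 0}$, and then to derive the weighted second inequality by a direct integration-by-parts argument.

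The hypotheses of \cite[Proposition~5.1]{GPSS22} require a uniform $L^p$-bound, for some $p > 1$, on the density of $\ome^n$ with respect to a fixed smooth background volume form on $\wX$, together with a uniform lower bound on the total volume. Both are available in our setting: the $L^p$-bound is precisely \eqref{Lp unif}, while $V_\ep = [\pi^\star\omega_X + \ep\,\omega_{\wX}]^n$ depends polynomially on $\ep$ and converges to $V = [\pi^\star\omega_X]^n > 0$ as $\ep \to 0$. Applying the cited proposition uniformly in $\ep$ then gives constants $\delta, C > 0$, independent of $\ep$ and of $x \in \wX$, which yield the first inequality
\[
\int_X(-G_x^\ep)^{1+\delta}\,\ome^n + \int_X |\nabla G_x^\ep|^{1+\delta}\,\ome^n \le C.
\]

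For the weighted estimate, I would proceed by a direct integration by parts. Set $u := -G_x^\ep$, so that $u \ge 1$ on $\wX$ by the normalization $\sup_{\wX} G_x^\ep = -1$, and $u(x) = +\infty$ because $G_x^\ep$ has a logarithmic singularity at $x$. For $\eta, \beta > 0$, the function $\phi_\eta := (u + \eta)^{-\beta}$ is continuous on $\wX$, smooth on $\wX \setminus \{x\}$, bounded above by $1$, vanishes at $x$, and satisfies $\langle \nabla G_x^\ep, \nabla \phi_\eta \rangle = \beta\,(u + \eta)^{-\beta - 1}|\nabla G_x^\ep|^2$. After a standard approximation (for instance, a truncation of $G_x^\ep$ at level $-M$ and passage to the limit $M \to \infty$, using the first estimate to control the error terms), the Green identity
\[
\phi_\eta(x) = \frac{1}{V_\ep}\int_X \phi_\eta\,\ome^n - \int_X \langle \nabla G_x^\ep, \nabla \phi_\eta \rangle\,\ome^n
\]
combined with $\phi_\eta(x) = 0$ and $\phi_\eta \le 1$ gives
\[
\beta \int_X \frac{|\nabla G_x^\ep|^2}{(u + \eta)^{\beta + 1}}\,\ome^n \;=\; \frac{1}{V_\ep}\int_X \phi_\eta\,\ome^n \;\le\; 1.
\]
Letting $\eta \to 0$ and applying Fatou's lemma yields the second assertion with $C_\beta = 1/\beta$.

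The main obstacle is verifying that \cite[Proposition~5.1]{GPSS22} applies \emph{uniformly} along the degenerating family $(\wX, \ome)$, rather than at each fixed $\ep$. This relies crucially on the uniform $L^p$-bound \eqref{Lp unif}, which itself rests on the a priori $\cC^0$ estimates for the solutions of the degenerate Monge--Amp\`ere equation \eqref{MAep}. Once this uniformity is granted, the weighted estimate reduces to the short computation above; the only delicate point is the Green identity for the nonsmooth test function $\phi_\eta$, which is handled by a routine approximation argument.
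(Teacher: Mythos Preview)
Your proposal is correct. The paper itself does not supply a proof of this theorem: it simply records it as ``a consequence of the breakthrough result recently obtained by Guo--Phong--Song--Sturm \cite[Proposition~5.1]{GPSS22}'', relying on the uniform $L^p$-bound \eqref{Lp unif} exactly as you identify, and then moves on. Both displayed inequalities are treated there as direct quotations from \cite{GPSS22}.

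Your write-up therefore goes slightly beyond the paper's own treatment. For the first inequality you match the paper (citation plus verification of hypotheses via \eqref{Lp unif} and the volume bound). For the weighted estimate you supply a self-contained derivation via the test function $\phi_\eta=(-G_x^\ep+\eta)^{-\beta}$ and Green's identity, yielding the explicit constant $C_\beta=1/\beta$. This argument is correct and standard; the approximation step you flag (truncating $G_x^\ep$ to justify the identity for the non-smooth $\phi_\eta$) is indeed routine once the $L^{1+\delta}$-bounds on $G_x^\ep$ and $\nabla G_x^\ep$ are in hand. The advantage of your route is that it makes transparent that the second estimate requires nothing beyond the Green equation and the normalisation $-G_x^\ep\ge 1$, whereas the paper simply defers both estimates to the cited source.
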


As we have seen above, the Kähler metrics $\ome$ converge locally smoothly to a Kähler metric $\pi^\star\om$ on $\wX\setminus E$. In what follows, we identify $\wX \setminus E$ with $\Xr$ and $\pi^\star\omega$ with $\omega$. From now on, we fix $x\in \Xr$.
 
 We have uniform $W^{1,1+\delta}_{\rm loc}$ bounds for $G_x^\ep$ on the manifold $\Xr$ in the usual sense. Therefore, up to using cut-off functions of $X_{\rm sing}$ and a standard diagonal argument, one can apply Kondrachov compactness theorem \cite[Theorem~7.22]{Gilb} in order to extract a subsequential limit $G_x\in L^1_{\rm loc}(\Xr)$ of $G_x^\ep$ when $\ep \to 0$ for the $L^{1}_{\rm loc}$ topology.  Thanks to Fatou lemma, it will satisfy
\begin{enumerate}[label=(G.\arabic*)]
\item $\mathrm{esssup}_{\Xr} G_x \le -1$.
\item $\Delta_\omega G_x=\delta_x-\frac 1V$  weakly on $\Xr$.
\item $\int_{\Xr}(-G_x)^{1+\delta} \om^n+\int_{\Xr}|\nabla G_x|^{1+\delta} \om^n\le C$
\item For every $\beta>0$, $\int_X\frac{|\nabla G_x|^{2}}{(-G_x)^{1+\beta}} \om^n\le C_\beta.$
\end{enumerate} 
The constants $C, C_\beta$ above are independent of the point $x\in \Xr$.  Note that (G.2) relies heavily on the locally smooth convergence of $\ome$ to $\om$ on $\Xr$. 

\begin{lem}
The function $G_x$ is smooth on $\Xr\setminus \{x\}$. 
\end{lem}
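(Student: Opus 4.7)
The plan is to invoke standard elliptic regularity for distributional solutions of a smooth elliptic equation. Fix any point $y \in \Xr \setminus \{x\}$ and a small coordinate ball $U \subset \Xr\setminus\{x\}$ around $y$ on which $\omega$ is a smooth Kähler metric (recall from Section~\ref{sec canonical} that the convergence $\ome \to \pi^\star \omega$ is locally smooth on $\pi^{-1}(\Xr)$, so $\omega$ is genuinely smooth there). On $U$ the Laplace operator $\Delta_\omega$ is a second order elliptic operator with $C^\infty$ coefficients.

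By property (G.2), the function $G_x$ satisfies the distributional equation
\[\Delta_\omega G_x = -\frac{1}{V}\]
on $U$, since the Dirac mass $\delta_x$ has support at $x\notin U$. The right hand side is (constant, hence) smooth. Classical elliptic regularity — Weyl's lemma, or more precisely interior regularity for weak solutions of linear elliptic equations with smooth coefficients, cf.~for instance \cite[Theorem~8.10]{Gilb} combined with a bootstrap on Sobolev indices — then yields a representative of $G_x$ that is $C^\infty$ on $U$. As $y$ was an arbitrary point of $\Xr\setminus\{x\}$, we conclude that $G_x \in C^\infty(\Xr\setminus\{x\})$.

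There is no real obstacle here: the only minor point of care is that $G_x$ is \emph{a priori} only an element of $L^1_{\rm loc}(\Xr)$ (with additional $W^{1,1+\delta}_{\rm loc}$ control coming from (G.3)), so the statement "$\Delta_\omega G_x = -1/V$ on $U$" must be read distributionally; this however is exactly the setting in which Weyl's lemma applies. Nothing about the singularities of $X$ enters, because the entire argument is local on the smooth locus $\Xr$ and away from the source point $x$.
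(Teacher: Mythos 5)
Your argument is correct and is essentially the same as the paper's: both reduce to the observation that away from $x$, on the regular locus, $G_x$ is a distributional solution of $\Delta_\omega G_x = -1/V$ for a smooth elliptic operator with smooth right-hand side, and then invoke interior elliptic regularity with a bootstrap. The only difference is cosmetic — you cite Gilbarg--Trudinger where the paper cites Agmon's Theorem~7.1.
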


\begin{proof}
 On $\Xr\setminus \{x\}$, we have $\Delta_\om G_x=-\frac 1 V$. Since $\omega$ is a smooth Kähler metric on $\Xr$, we can apply \cite[Theorem~7.1]{Agmon} inductively to $G_x$ and its derivatives since $G_x\in L^{1+\delta}$ for some $\delta>0$. 
\end{proof}

\begin{prop}[$L^{\infty}$-estimate]
\label{sup estimate}
Let $X$ be a compact Kähler variety with klt singularity, let $h$ be a smooth hermitian metric on $K_X$ and let $\omega$ solve \eqref{MA}. Assume that Property (P) from page~\pageref{propP} is satisfied. 

\noindent
Then there exists $p>1$ such that for any non-negative function $f\in C^2(X_{\rm reg})$, one has  
\[f+(\Delta_\omega f)_- \in L^p(X_{\rm reg}) \quad \Longrightarrow \quad f\in L^{\infty}(X_{\rm reg}).\]
\end{prop}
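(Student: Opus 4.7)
The plan is to establish a Green representation identity
\[f(x) = \frac{1}{V}\int_X f\,\omega^n + \int_X G_x\,\Delta_\omega f\, \omega^n\]
for every $x\in X_{\rm reg}$, and then combine it with the non-positivity of $G_x$ and Hölder's inequality to convert the $L^p$ data on $f$ and $(\Delta_\omega f)_-$ into the desired $L^\infty$ bound. Granting the identity, the estimate is immediate: since $\operatorname{esssup}_{X_{\rm reg}} G_x \le -1$ by (G.1), the product $G_x (\Delta_\omega f)_+$ is non-positive, so
\[\int G_x\,\Delta_\omega f\, \omega^n \le \int (-G_x)\,(\Delta_\omega f)_-\, \omega^n \le \|G_x\|_{L^{1+\delta_0}(\omega^n)}\, \|(\Delta_\omega f)_-\|_{L^{q}},\]
with $q:=(1+\delta_0)/\delta_0$, by Hölder. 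Combined with the trivial bound $\frac{1}{V}\int f\omega^n \le \|f\|_{L^1}$ and the $x$-uniform bound on $\|G_x\|_{L^{1+\delta_0}}$ from Theorem~\ref{GPSS}, this gives $f(x) \le C(\|f\|_{L^p} + \|(\Delta_\omega f)_-\|_{L^p})$ uniformly in $x$, for any $p \ge q$.

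To derive the representation, I would fix $x \in X_{\rm reg}$ and set $\chi_\delta := 1 - \rho_\delta$, where $\rho_\delta$ are the cutoffs from Property (P). For $\delta$ small, $\chi_\delta \equiv 1$ near $x$ and vanishes in a neighborhood of $X_{\rm sing}$, so $\chi_\delta f$ extends by zero to a smooth function on the resolution $\widetilde X$ compactly supported away from the exceptional divisor $E$. The classical Green identity on the smooth Kähler manifold $(\widetilde X, \omega_\epsilon)$ then gives
\[f(x) = \frac{1}{V_\epsilon}\int \chi_\delta f\,\omega_\epsilon^n + \int G_x^\epsilon\, \Delta_{\omega_\epsilon}(\chi_\delta f)\, \omega_\epsilon^n.\]
Letting $\epsilon \to 0$ at fixed $\delta$ is straightforward: the locally smooth convergence $\omega_\epsilon \to \omega$ on the compact set $\{\chi_\delta > 0\} \subset \widetilde X \setminus E$ and the $L^{1+\delta_0}_{\rm loc}$ convergence $G_x^\epsilon \to G_x$ from Section~\ref{sec Green} yield the same identity with $\omega$ in place of $\omega_\epsilon$. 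Expanding $\Delta_\omega(\chi_\delta f) = \chi_\delta \Delta_\omega f + f\,\Delta_\omega \chi_\delta + 2\operatorname{Re}\langle \nabla \chi_\delta, \nabla f\rangle_\omega$, the first summand converges to $\int G_x \Delta_\omega f\,\omega^n$ as $\delta \to 0$ by monotone convergence on $(\Delta_\omega f)_\pm$ separately, once again exploiting $G_x \le 0$.

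The hard part will be showing that the two remaining error terms, $\int G_x f\,\Delta_\omega \chi_\delta\, \omega^n$ and $2\int G_x \langle \nabla \chi_\delta, \nabla f\rangle_\omega\, \omega^n$, tend to zero as $\delta \to 0$. Since no a priori control on $\nabla f$ is available, the gradient term must first be integrated by parts against $\chi_\delta f$ being smooth and compactly supported off $E$, converting it into $-\int f G_x \Delta_\omega \chi_\delta\, \omega^n - \int f\langle \nabla G_x, \nabla \chi_\delta\rangle_\omega\,\omega^n$. The $\Delta_\omega \chi_\delta$ pieces are then controlled by Hölder, pairing a suitable $L^r$ bound on $fG_x$ with the $L^{1+\epsilon}$ decay of $\|\Delta_\omega \rho_\delta\|$ from Property~(P). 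The remaining cross term $\int f\langle \nabla G_x, \nabla \chi_\delta\rangle_\omega$ is where the weighted gradient estimate (G.4) becomes essential: factoring $|\nabla G_x| = \frac{|\nabla G_x|}{(-G_x)^{(1+\beta)/2}} \cdot (-G_x)^{(1+\beta)/2}$ and applying Cauchy--Schwarz, one uses the $L^2$ bound $(C_\beta)^{1/2}$ together with the $L^{2+\epsilon}$-decay of $\nabla \rho_\delta$ from Property~(P) to force the term to zero. Balancing all these exponents is the delicate bookkeeping; it ultimately determines how large $p>1$ must be chosen in terms of $\delta_0$ (from Theorem~\ref{GPSS}) and $\epsilon$ (from Property~(P)), yielding the existence of the exponent asserted in the statement.
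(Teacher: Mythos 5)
Your overall structure matches the paper's proof: Green representation via $\rho_\delta f$ on the resolution, expansion of $\Delta_\omega(\rho_\delta f)$, integration by parts to eliminate $\nabla f$, and a combination of Property (P), H\"older, and the weighted gradient estimate (G.4). The gap is in the final step, where you try to make the error terms vanish \emph{pointwise in $x$}. The exponents cannot be balanced this way, no matter how large you take $p$. Consider the term $\int_X f\,G_x\,\Delta_\omega\rho_\delta\,\omega^n$: a pointwise H\"older bound is forced into three factors $\|f\|_{L^s}\|G_x\|_{L^t}\|\Delta_\omega\rho_\delta\|_{L^u}$ with $\tfrac1s+\tfrac1t+\tfrac1u=1$. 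Theorem~\ref{GPSS} only controls $\|G_x\|_{L^t}$ uniformly in $x$ for $t\le 1+\delta$, so $\tfrac1t\ge\tfrac1{1+\delta}$; Property (P) needs $u\le 1+\varepsilon$, so $\tfrac1u\ge\tfrac1{1+\varepsilon}$. This leaves $\tfrac1s\le \tfrac{\delta}{1+\delta}-\tfrac1{1+\varepsilon}$, which is negative unless $\delta\varepsilon\ge 1$ --- exactly the opposite of what GPSS and Property (P) give you (both $\delta$ and $\varepsilon$ are small positive constants over which you have no control). The same obstruction kills the cross term: after your Cauchy--Schwarz with (G.4), you must control $(-G_x)^{1+\beta}$ in some $L^b$ with $b>1$ simultaneously with the $L^{1+\varepsilon/2}$ decay of $|\nabla\rho_\delta|^2$, and the arithmetic again requires $\delta$ to be large.

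The paper's essential device, which is missing from your outline, is to prove the estimate in the weak form $\int_X\tau(x)f(x)\,\omega^n\le M\int_X\tau(x)\,\omega^n$ for every smooth compactly supported $\tau\ge 0$ (sufficient because $f$ is continuous on $X_{\rm reg}$), then multiply the pointwise identity by $\tau(x)\omega^n(x)$, integrate in $x$, apply Fubini, and exploit the symmetry $G_x(y)=G_y(x)$. After Fubini, the problematic dependence on $G_x(y)$ inside the $y$-integral becomes $\int_X\tau(x)\,G_y(x)\,\omega^n(x)$ (or $\int_X\tau(x)^2\,(-G_y)^{1+\varepsilon}(x)\,\omega^n(x)$ for the gradient term), which is bounded \emph{uniformly in $y$} by $\sup\tau\cdot\sup_y\|G_y\|_{L^1}$ (resp.\ $\sup\tau^2\cdot\sup_y\|G_y\|_{L^{1+\varepsilon}}^{1+\varepsilon}$) directly from (G.3). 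The three-way H\"older thereby collapses to a two-way split between $f$ and $|\nabla\rho_\delta|^2$ or $\Delta_\omega\rho_\delta$, whose exponents close without any constraint relating $\delta$ and $\varepsilon$. Without this $\tau$-averaging and symmetry argument your estimates do not close, and the proposal as written has a genuine gap at its crux.
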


\begin{proof}
Up to decreasing $\ep>0$, we can assume that 
\[ \int_X \left((|G_x|^{1+\ep}+|\nabla G_x|^{1+\ep}+ \frac{|\nabla G_x|^{2}}{(-G_X)^{1+\ep}}\right) \om^n \le C,\]
for some $C$ independent of $x\in \Xr$. Let $p$ (resp. $p'$) be the conjugate exponent of $1+\ep$ (resp. $1+\ep/2$). We assume that $p_0\ge \max\{p, 2p'\}$.  
Fix $x\in \Xr$. We claim that 
\begin{equation}
\label{ineq}
f(x) \le \frac 1V \int_X f \om^n + \|G_x\|_{L^{1+\ep}} \|g\|_{L^{p_0}}.
\end{equation}
Let $M$ be the RHS of \eqref{ineq}. Since $f$ is continuous on $\Xr$, the claim will follow if one can prove that for any smooth non-negative function $\tau$ with compact support on $\Xr$, we have
\begin{equation}
\label{ineq1}
\int_X \tau(x)f(x) \om^n \le M \int_X \tau(x) \om^n
\end{equation}
The function $\rho_\delta f$ is $C^2$ and compactly supported on $\Xr$, so we have
\begin{equation}
\label{Gx}
\rho_\delta f(x)=\frac 1V \int_X \rho_\delta f \om^n+ \int_X G_x \Delta_\om (f\rho_\delta) \om^n.
\end{equation}
Since $\Delta_\om(f \rho_\delta)=f\Delta_\om \rho_\delta+\rho_\delta \Delta_\omega f+2 \langle \nabla f, \nabla \rho_\delta\rangle_\om$ and 
\[\int_X G_x \langle \nabla f, \nabla \rho_\delta\rangle_\om \om^n=-\int_X G_x f \Delta_\om \rho_\delta \om^n-\int_Xf  \langle \nabla G_x, \nabla \rho_\delta\rangle_\om \om^n \]
we find that 
\begin{equation}
\label{IPP}
\int_X G_x \Delta_\om (f\rho_\delta) \om^n=\int_X \rho_\delta G_x \Delta_\om f\om^n-2\int_X f \langle \nabla G_x, \nabla \rho_\delta\rangle_\om \om^n-\int_X G_x f \Delta_\om \rho_\delta \om^n.
\end{equation}
Clearly, one has
\begin{equation}
\label{easy}
\int_X \rho_\delta G_x \Delta_\om f \om^n \le \|G_x\|_{L^{1+\ep}} \|g\|_{L^{p_0}}
\end{equation}
Next, Cauchy-Schwarz inequality yields 
\begin{eqnarray*}
\left|\int_X f \langle \nabla G_x, \nabla \rho_\delta\rangle \om^n\right| &\le& \left(\int_X f^2G_x^{1+\ep} |\nabla \rho_\delta|_\omega^2\om^n\right)^{\frac 12} \left(\int_{\Xr}\frac{|\nabla G_x|^{2}}{G_x^{1+\ep}} \om^n\right)^{\frac 12}\\
&\le & C\left(\int_X f^2G_x^{1+\ep} |\nabla \rho_\delta|_\omega^2\om^n\right)^{\frac 12} 
\end{eqnarray*}
By Cauchy-Schwarz again, the symmetry of $G(\cdot,\cdot)$ yields
\begin{eqnarray*}
\left|\int_X \tau(x) \om^n\int_X f \langle \nabla G_x, \nabla \rho_\delta\rangle \om^n\right| &\le& CV^{\frac 12}\left(\int_X \tau(x)^2 \om^n \int_X f^2(y)G_y^{1+\ep}(x) |\nabla \rho_\delta(y)|_{\omega}^2 \om^n\right)^{\frac 12} \\
& \le & CV(\sup_X \tau^2) \sup_{y\in \Xr} \|G_y\|_{L^{1+\ep}}^{1/2}\int_X f^2(y) |\nabla \rho_\delta(y)|_{\omega}^2 \om^n\\
&\le & CV(\sup_X \tau^2) \sup_{y\in \Xr} \|G_y\|_{L^{1+\ep}}^{1/2} \|f\|_{L^{2p'}}\|\nabla \rho_\delta\|_{L^{2+\ep}}
\end{eqnarray*}
and thus 
\begin{equation}
\label{limsup}
\limsup_{\delta \to 0} \int_X \tau(x) \om^n\left|\int_X f \langle \nabla G_x, \nabla \rho_\delta\rangle \om^n\right|=0.
\end{equation}
Finally, we have similarly
\begin{eqnarray*}
\left|\int_X \tau(x) \om^n\int_X f G_x \Delta_\om \rho_\delta \om^n\right| &=&\left| \int_X f(y)\Delta_\omega \rho_\delta(y)\omega^n \int_X \tau(x)G_y(x) \omega^n\right|\\
&\le & \sup_X \tau \sup_{y\in \Xr} \|G_y\|_{L^{1}}\|f\|_{L^{2p}}\|\Delta_\om \rho_\delta\|_{L^{1+\ep}}
\end{eqnarray*}
and thus 
\begin{equation}
\label{limsup2}
\limsup_{\delta \to 0} \int_X \tau(x) \om^n\left|\int_X f G_x \Delta_\om \rho_\delta \om^n\right|=0.
\end{equation}

Recall that given \eqref{Gx}, all we need to conclude is to show that  
\[\limsup_{\delta \to 0} \int_X \tau(x) \om^n\int_X G_x \Delta_\om (f\rho_\delta) \om^n \le M\int_X\tau(x)\om^n.\]
This follows from  \eqref{IPP}, \eqref{limsup} and \eqref{limsup2}, hence the proposition is proved. 
\end{proof}

\begin{rem}
\label{local}
As the proof shows, the above statement is local. That is, let $x\in X$ and let $U$ be a neighborhood of  $x$ such that $U_{\rm sing}=X_{\rm sing}\cap U$ admits a family of cut-off functions $(\rho_\delta)$ such that $\lim_{\delta\to 0}\int_U( |\nabla \rho_\delta|_\om^{2+\ep}+|\Delta_\omega \rho_\delta|^{1+\ep}) \, \om^n=0$  for some $\ep>0$. Then the conclusion of the lemma holds for any $f\in \mathcal C^2(U)$ with $\mathrm{Supp}(f)\subset U$. 
\end{rem}

\begin{rem}
\label{bounded zero}
Since the support of the singular set has measure zero with respect to $\om^n$, Hölder inequality shows that the existence of $\ep>0$ such that $\int_X (|\nabla \rho_\delta|_\om^{2+\ep}+|\Delta_\omega \rho_\delta|^{1+\ep}) \, \om^n$ converges to zero when $\delta \to 0$ is equivalent to the existence of $\ep'>0$ such that $\int_X (|\nabla \rho_\delta|_\om^{2+\ep'}+|\Delta_\omega \rho_\delta|^{1+\ep'}) \, \om^n$ remains bounded as $\delta\to 0$. 
\end{rem}

We provide below a complementary result to Proposition~\ref{sup estimate}; it will however not be used later in the article. 

\begin{prop}
\label{sup estimate 2}
Assume that $\Xs$ admits a family of cut-off functions $(\rho_\delta)$ such that $\int_X |\nabla \rho_\delta|_\om^{2+\ep} \om^n$ converges to zero for some $\ep>0$, when $\delta \to 0$. Let $f$ be a non-negative Lipschitz function on $\Xr$  satisfying 
\begin{enumerate}[label=$\bullet$]
\item $f\in L^{p}(X, \om^n)$ for some $p>1$.
\item $\sup_{\Xr} |\nabla f|_\om <+\infty$.
\end{enumerate}
Then $\sup_{\Xr} f<+\infty$. 
\end{prop}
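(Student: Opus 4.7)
The natural strategy is to mirror the argument of Proposition~\ref{sup estimate}, with two structural changes tailored to the different hypotheses. Since we now control $|\nabla f|_\om$ instead of $\Delta_\om f$, we perform one integration by parts in the Green representation formula rather than two; this eliminates the term involving $\Delta_\om \rho_\delta$ and explains why only the cutoff condition on $|\nabla \rho_\delta|$ is required. The target inequality to establish is
\[f(x) \le \frac{1}{V} \int_X f\, \om^n + C \sup_{X_{\rm reg}} |\nabla f|_\om\]
for almost every $x \in X_{\rm reg}$, which, by continuity of $f$ on $X_{\rm reg}$, upgrades to every $x$ and yields the conclusion.

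The starting point is the Green identity applied to the Lipschitz, compactly supported function $\rho_\delta f$. After a regularization argument to justify its use for a merely Lipschitz integrand, it reads, for $x \in X_{\rm reg}$ and $\delta$ small enough that $\rho_\delta(x)=1$,
\[f(x) = \frac{1}{V} \int_X \rho_\delta f\, \om^n - \int_X \rho_\delta \langle \nabla G_x, \nabla f\rangle\, \om^n - \int_X f \langle \nabla G_x, \nabla \rho_\delta\rangle\, \om^n.\]
I would handle the first summand by dominated convergence, using $f \in L^1$, to obtain the leading term $\tfrac{1}{V}\|f\|_{L^1}$. The second summand is bounded uniformly in $\delta$ and $x$ by $C \sup_{X_{\rm reg}}|\nabla f|_\om$, thanks to Cauchy--Schwarz and the bound $\|\nabla G_x\|_{L^1} \le C$ coming from (G.3).

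The hard part is showing that the third (boundary) term vanishes in the limit $\delta \to 0$. My plan is to proceed exactly as in the derivation of \eqref{limsup} in the proof of Proposition~\ref{sup estimate}: integrate this term against a smooth compactly supported test function $\tau \ge 0$ on $X_{\rm reg}$, apply Fubini together with the symmetry $G_x(y) = G_y(x)$, and then Cauchy--Schwarz twice, using the refined Green's-function estimate (G.4) to absorb the factor $|\nabla G_x|^2$ against a negative power of $-G_x$. This should produce a bound of the form
\[C \|\tau\|_\infty \, \sup_{y}\|G_y\|_{L^{1+\varepsilon}}^{1/2}\, \|f\|_{L^{2p'}} \, \|\nabla \rho_\delta\|_{L^{2+\varepsilon}},\]
where $p'$ is the conjugate exponent of $1+\varepsilon/2$; the cutoff hypothesis then sends this to zero as $\delta \to 0$ (provided $p$ is large enough relative to $\varepsilon$, exactly as in Proposition~\ref{sup estimate}).

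Passing to the limit in the Green identity then yields the desired pointwise estimate for a.e.\ $x \in X_{\rm reg}$, and the continuity of $f$ on $X_{\rm reg}$ extends it to every point, completing the argument. The principal technical difficulty, just as in Proposition~\ref{sup estimate}, is the control of the boundary term; it is made possible by combining (G.4) with the cutoff condition $\|\nabla \rho_\delta\|_{L^{2+\varepsilon}} \to 0$, which is precisely what differs from and is weaker than Property (P).
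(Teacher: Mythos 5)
Your overall strategy — one integration by parts in the Green identity rather than two, then the same $\tau$/Fubini/Cauchy--Schwarz treatment of the boundary term using (G.3)--(G.4) — is exactly the route the paper takes, and the first two terms are handled correctly. However, there is a genuine gap in how you treat the integrability of $f$. You write that the boundary-term estimate goes through ``provided $p$ is large enough relative to $\varepsilon$, exactly as in Proposition~\ref{sup estimate},'' but the two hypotheses are not analogous. In Proposition~\ref{sup estimate} the statement reads ``there exists $p>1$ such that $\ldots$,'' so the large exponent is built into the hypothesis on $f$; here the hypothesis only gives $f\in L^p$ for \emph{some} $p>1$, possibly very close to~$1$, while once $\ep$ has been fixed by the Green's-function estimates the exponent $2p'$ needed in the Cauchy--Schwarz step (with $p'$ conjugate to $1+\ep/2$) is typically much larger. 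The argument as written therefore does not close.

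The paper's fix is a boosting reduction that your outline omits: first replace $f$ by $f+1$ to arrange $f\ge 1$, then replace $f$ by $f^\gamma$ for a small $\gamma\in(0,1)$. Since $f\ge 1$, the Lipschitz bound is preserved, $\sup_{\Xr}|\nabla f^\gamma|_\om=\gamma\sup_{\Xr} f^{\gamma-1}|\nabla f|_\om\le \gamma \sup_{\Xr}|\nabla f|_\om$, while $f^\gamma\in L^{p/\gamma}(X,\om^n)$, so by taking $\gamma$ small one pushes the integrability exponent above $2q$ ($q$ being the conjugate exponent of $1+\ep/2$). A uniform bound on $f^\gamma$ then gives $\sup_{\Xr} f<\infty$. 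With this reduction inserted before the Green identity, the rest of your outline — the $L^1$ bound for the first term, $\|\nabla G_x\|_{L^1}\cdot\sup|\nabla f|$ for the second, and the verbatim application of the argument for \eqref{limsup} to the third — is correct and coincides with the paper's proof.
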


\begin{proof}
Up to decreasing $\ep>0$, we can assume that $\|f\|_{L^p}+\|G_x\|_{L^{1+\ep}}+\|\nabla G_x\|_{L^{1+\ep}} \le C$ for some $C$ independent of $x\in \Xr$. Let $q$ be the conjugate exponent of $1+\ep/2$.
Up to replacing $f$ with $f+1$, one can assume that $f\ge 1$. In particular, one has $\sup_{\Xr} |\nabla f^{\gamma}|_{\om} <+\infty$ for any $1\ge \gamma>0$. Up to replacing $f$ with $f^\gamma$ for small $\gamma$, one can assume without loss of generality that $f\in L^{p}(X, \om^n)$ for $p\ge 2q$. 
Next, we write 

\begin{equation}
\label{Gx2}
\rho_\delta f(x)= \frac 1V \int_X \rho_\delta f \om^n- \int_X \langle \nabla G_x ,\nabla (f\rho_\delta)\rangle \om^n.
\end{equation}
In particular, one gets
\[\rho_\delta f(x)\le \|f\|_{L^1}+\|\nabla G_x\|_{L^1}\cdot \sup_{\Xr} |\nabla f|+ \int_X f|\langle \nabla G_x ,\nabla \rho_\delta\rangle| \om^n.\]
Using the same argument as in the proof of Proposition~\ref{sup estimate}, we would be done if one could prove that for any smooth non-negative function $\tau$ with compact support on $\Xr$, one has
\[\limsup_{\delta \to 0} \int_X \tau(x) \om^n \int_Xf|\langle \nabla G_x ,\nabla \rho_\delta\rangle| \om^n=0.\]
Since $f\in L^{2q}$, the proof of \eqref{limsup} applies verbatim to show the above identity, which ends the proof of the proposition.
\end{proof}
\medskip

\begin{rem}
The great news in this framework is that a very general Sobolev inequality was recently established in \cite{GPSS23}: it represents an  important result, which for sure will have a crucial impact on differential geometry of singular spaces. It would be very interesting to see if  one can obtain a more general version of Proposition \ref{sup estimate} (\emph{zum Beispiel} by imposing less restrictive conditions on the family of cutoff functions) via Sobolev inequality.
\end{rem}

\section{Some properties of canonical metrics}
\subsection{Orbifold regularity}
\label{sec projective}

Recall that if $X$ is a Kähler space with quotient singularities, then an orbifold Kähler metric on $X$ is a closed positive $(1,1)$ current $\omega$ with local potentials on $X$ such that $\omega|_{\Xr}$ is a smooth Kähler form and such that for any local uniformizing chart $p:V\to U$ where $V$ is smooth, $p^\star(\omega|_{U_{\rm reg}})$ extends to a smooth Kähler metric on $V$. 

 Let $(X,\omega_X)$ be a compact Kähler space with log terminal singularities and let $X_{\rm orb}\subset X$ be the euclidean open subset of $X$ where $X$ has quotient singularities. Next, let $\omega=\om_X+dd^c\varphi$ solving \eqref{MA}. 
 
 \begin{thm}[\cite{LiTian19}]
 \label{thmLT}
 Assume that $X$ is projective. Then,  $\omega|_{X_{\rm orb}}$ is an orbifold Kähler metric. 
 \end{thm}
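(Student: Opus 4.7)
The problem is local on $X_{\rm orb}$. I would fix $x \in X_{\rm orb}$ and a local uniformizing chart $p : V \to U$ around $x$, where $V \subset \C^n$ is a $G$-invariant open neighborhood of $0$, $U$ is an analytic neighborhood of $x$ with $U \cong V/G$ and $p(0)=x$, and $G \subset \mathrm{GL}_n(\C)$ is a finite subgroup without complex reflections (Chevalley-Shephard-Todd). The goal is to show that $p^\star\omega$ extends to a smooth K\"ahler form across $0 \in V$. The strategy is to globalize this local analytic picture algebraically via Artin's approximation theorem, thereby reducing the regularity question to classical Monge-Amp\`ere regularity at a smooth point of a projective variety.

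First I would use Artin's approximation to construct a normal projective variety $Y$ together with a finite Galois cover $\pi : Y \to X$ with group $G$ which is quasi-\'etale (\ie \'etale in codimension one), such that some $y \in \pi^{-1}(x)$ is a smooth point of $Y$ at which the germ $\pi : (Y,y) \to (X,x)$ is analytically isomorphic to $p : (V,0) \to (V/G,0)$. The role of Artin here is to upgrade the analytic germ identification $(X,x) \cong_{\mathrm{an}} (V/G,0)$ to an \'etale-algebraic one: this produces a finite \'etale cover of an \'etale neighborhood of $x$ in $X$ realizing locally the Galois cover $V \to V/G$, and projectivity of $X$ then allows me to globalize it by taking the normalization of $X$ in the corresponding function field extension.

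Once the cover $\pi$ is constructed, I would pull back the Monge-Amp\`ere equation to $Y$. Since $\pi$ is quasi-\'etale and $X$ is klt, the ramification formula gives $\pi^\star K_X = K_Y$ as $\Q$-Cartier divisors, so $\pi^\star \mu_h$ is a smooth positive volume form on $Y_{\rm reg}$, in particular near $y$. Setting $\tilde\varphi := \pi^\star \varphi$, one has
\[(\pi^\star\omega_X + dd^c \tilde\varphi)^n = \pi^\star \mu_h \quad \text{on } Y_{\rm reg},\]
with $\tilde\varphi$ bounded and $(\pi^\star\omega_X)$-PSH. Working on an analytic chart of $y$ identified with $(V,0)$ and picking any smooth local K\"ahler form $\omega_V$ on $V$, I can rewrite this as
\[(\omega_V + dd^c \Psi)^n = e^F \omega_V^n\]
with $\Psi$ bounded $\omega_V$-PSH and $e^F$ a smooth, strictly positive function. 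This is the classical complex Monge-Amp\`ere equation on a smooth domain with smooth positive right-hand side, and the combined interior estimates of Ko\l odziej ($L^\infty$), Yau (Laplacian), Evans-Krylov ($C^{2,\alpha}$) and Schauder then yield smoothness of $\Psi$, hence of $p^\star\omega$, on a neighborhood of $0 \in V$.

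The main obstacle lies in the first step: making precise how Artin's approximation theorem combined with projectivity of $X$ produces a global finite Galois cover that realizes the local quotient structure near $x$ and has $Y$ smooth above $x$. The subsequent steps---functoriality of the Monge-Amp\`ere equation under the quasi-\'etale cover $\pi$ together with standard interior Monge-Amp\`ere regularity on the smooth locus of $Y$ near $y$---are then routine.
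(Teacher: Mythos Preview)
Your overall strategy---use Artin approximation to produce an algebraic cover realizing the local uniformizing chart, pull back the Monge--Amp\`ere equation, and invoke regularity at a smooth point upstairs---is exactly the paper's approach. However, two of your claims do not hold as stated and the paper handles them differently.

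\medskip

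\textbf{The cover is neither globally quasi-\'etale nor Galois with group $G$.} Artin's theorem produces an \'etale neighborhood $i:U\to X$ of $x$ together with a finite quasi-\'etale Galois cover $p:V\to U$ with $V$ smooth. Globalizing the quasi-finite composite $i\circ p:V\to X$ (whether by normalization in the function field or, as the paper does, by Zariski's main theorem giving $V\hookrightarrow\overline V$ open and $f:\overline V\to X$ finite) yields a finite morphism that is quasi-\'etale over a neighborhood of $x$ but will in general ramify in codimension one elsewhere; the paper says this explicitly. It is also not Galois with group $G$: the \'etale map $i$ contributes a nontrivial function-field extension $k(U)/k(X)$, so $k(V)/k(X)$ need not be Galois. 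Fortunately neither global property is needed---all that matters is smoothness of a preimage of $x$ and quasi-\'etaleness of the cover there---but your identity $\pi^\star K_X=K_Y$ is only valid near $y$, and on a resolution of $\overline V$ the measure $\pi^\star\mu_h$ acquires both zeros and poles along an snc divisor, which the paper tracks carefully.

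\medskip

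\textbf{The regularity step is not purely local.} You invoke ``Yau's Laplacian estimate'' as an interior estimate, but Yau's $C^2$ estimate is a maximum-principle argument on a compact manifold; there is no direct interior analogue bootstrapping from a merely bounded psh solution. One could attempt a local argument via the Dirichlet problem plus the comparison principle, but this requires continuity of $\varphi$ across $X_{\rm sing}$, which you have not established. The paper avoids the issue entirely: it passes to a log resolution $g:W\to\overline V$, observes that $[f^\star\omega_X]$ is K\"ahler on $\overline V$ (since $f$ is finite) so that the ample locus of $[\pi^\star\omega_X]$ contains $W\setminus\mathrm{Exc}(g)$, and then applies the global regularity theorem \cite[Theorem~B.1]{BBEGZ19} for big semipositive classes to conclude that $\pi^\star\omega$ is smooth K\"ahler on $W\setminus(\mathrm{Supp}(E)\cup\mathrm{Exc}(g))\supset g^{-1}(V)$. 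This is the substantive analytic input your sketch is missing.
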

  The proof relies on the existence of a partial desingularization $\mu : \wX\to X$ such that $\wX$ has only quotient singularities and $\mu$ is isomorphic over $X_{\rm orb}$. The construction of $\mu$ relies on deep compactification results for stacks and is highly non-canonical, making its generalization to the complex analytic case difficult. 
    
  We provide below a much more elementary proof of Theorem~\ref{thmLT} relying solely on Artin's approximation theorem \cite{Artin69}. Recall that Artin's result implies that $X_{\rm orb}$ admits a covering in the étale topology by quasi-projective varieties which are finite quasi-étale quotients of smooth quasi-projective varieties. 
 
 \begin{proof}[Alternative proof of Theorem~\ref{thmLT}] 
Let $x\in X_{\rm orb}$. By \cite[Corollary~2.6]{Artin69}, there exists an étale map $i:U\to X$  such that $x\in i(U)$ and a finite, quasi-étale Galois cover $p:V\to U$ with $V$ quasi-projective smooth. By Zariski's main theorem, the quasi-finite map $i\circ p : V\to  X$ is the composition of an open immersion $V\hookrightarrow \overline V$ into a normal projective variety $\overline V$ and a surjective, finite morphism $f:\overline V\to X$. In summary, we have the following diagram
 \[
\begin{tikzcd}
\overline V \arrow[rrd, "f"]& &   \\
 V \arrow[u, hookrightarrow]  \arrow[r,"p", swap] & U \arrow[r, "i", swap] & X
 \end{tikzcd}
\]
Of course, $f$ is ramified in codimension one in general so that $\overline V$ need not be klt, but by construction $f$ is quasi-étale over a neighborhood of $x$. 

Let $g: W\to \overline V$ be the functorial resolution of $\overline V$ (so that $g$ is isomorphic over $V\subset \overline{V}_{\rm reg}$) and let $\pi:=f\circ g:W\to X$ be the composition, i.e. 
 \[
\begin{tikzcd}
W \arrow[d, "g", swap] \arrow[dr,"\pi"]&    \\
\overline V \arrow[r, "f", swap] & X
 \end{tikzcd}
\]
One can write $K_W=\pi^\star K_X+E$ where $E=\sum_{i\in I} a_i E_i$ is a $\Q$-divisor supported on the codimension one component of the non-étale locus of $\pi$, i.e. the union of $\mathrm{Exc}(g)$ and the strict transform by $g$ of the branching divisor of $f$. In particular, $g(\mathrm{Supp}(E))\subset \overline V\setminus V$. Note that $E$ need not have SNC support at this point. But one can consider a log resolution $W'\to W$ of $(W,E)$. Then the induced maps $\pi':W'\to X$ and $g':W'\to \overline V$ satisfy $K_{W'}=\pi'^*K_X+E'$ and $g'(\mathrm{Supp}(E'))\subset \overline V\setminus V$. So without loss of generality, one can assume that $\mathrm{Supp}(E)$ has simple normal crossings and 
\begin{equation}
\label{V}
g(\mathrm{Supp}(E))\subset \overline V\setminus V.
\end{equation} 

If $\Omega$ is a local trivialization of $mK_X$ defined on an open set $S\subset X$, then $\pi^\star\Omega|_{S_{\rm reg}}$ extends to a meromorphic section of $mK_W$ over $\pi^{-1}(S)$ with a pole of order $-ma_i$ along $E_i|_{\pi^{-1}(S)}$. This shows that $\pi^\star \mu_h$ is of the form $\prod_{i\in I} |s_i|^{2a_i}_{h_i} \omega_W^n$ where $s_i$ is a section cutting out $E_i$, $h_i$ is a smooth hermitian metric on $\mathcal O_W(E_i)$ and $\omega_W$ is a Kähler form on $W$. Set $\psi_+:=\sum_{a_i>0} a_i \log |s_i|^2$ and $\psi_-:=\sum_{a_i<0}(-a_i) \log |s_i|^2$; these are quasi-psh functions on $W$ such that 
\begin{equation}
\label{MA W}
(\pi^\star\omega_X+dd^c\pi^\star\varphi)^n=e^{\psi_+-\psi_-} \omega_W^n.
\end{equation}
Moreover, since $E$ has snc support and $\int_W e^{\psi_+-\psi_-}\omega_W^n=\int_X d\mu_h <+\infty$, we have 
\begin{equation}
\label{Lp}
\exists  p>1, \quad e^{-\psi_-}\in L^p(\omega_W^n). 
\end{equation}
Next, since $f$ is finite the class  $[f^*\omega_X]$ is Kähler by e.g. \cite[Proposition~3.5]{GK20}. In particular, the ample locus of $[\pi^\star\omega_X]=[g^*f^*\omega_X]$ satisfies
\begin{equation}
\label{Amp}
\mathrm{Amp}([\pi^\star\omega_X])\supset W\setminus \mathrm{Exc}(g).
\end{equation}
Actually both sets are equal but we don't need it. Combining \cite[Theorem~B.1]{BBEGZ19}, \eqref{Lp}, \eqref{Amp}, we see that $\pi^\star\omega$ is a Kähler metric away from $\mathrm{Supp}(E)\cup \mathrm{Exc(g)}$. Since $g$ is isomorphic over $V$ and $g^{-1}(V)\cap \mathrm{Supp}(E)=\emptyset$ by \eqref{V}, this shows that $f^*\omega|_V$ is a Kähler metric, i.e. $\omega$ is an orbifold Kähler metric near $x$. 
\end{proof}

\begin{rem}[Kähler case] 
\label{rem Kahler}The analog of Theorem~\ref{thmLT} when $X$ is merely Kähler is not known. However, W. Ou showed in \cite{Ou} that there exists a bimeromorphic modification $\mu: \wX\to X$ where $\wX$ has only quotient singularities and such that $\mu$ is isomorphic in codimension two over $X$. It is very likely that one could then argue as in \cite{LiTian19} and prove that the metric $\omega$ solving \eqref{MA} is orbifold over the set where $X$ is an isomorphism.

\end{rem}

\subsection{A direct construction}
\label{sec direct}
Given a compact Kähler space $(X, \omega_X)$ with quotient singularities, it is quite straightforward to construct a metric 
\[\omega_o:= \omega_X+ dd^c\varphi\]
compatible with the local uniformizations, i.e. an orbifold Kähler metric.

In the more general context we are considering in this article, $X$ admits log-terminal singularities and as we have seen in Theorem \ref{thmLT}, we can construct a metric with this property in the complement of an analytic set of co-dimension at least three as long as $X$ is projective (but most likely in the Kähler case too, cf Remark~\ref{rem Kahler}. But that metric  is far from being explicit, given that it is the solution of a Monge-Ampère equation. In this section we show that in dimension three and for a very particular class of singularities an explicit orbifold Kähler metric can be obtained.
\medskip

\noindent More precisely, we show that the following holds.
\begin{thm}\label{A1}
Let $(X, \omega_X)$ be a $3$-dimensional compact Kähler space with log-terminal singularities, and let $Z:= X\setminus X_{\rm orb}$ be the complement of the orbifold locus of $X$. We assume that $X_{\rm sing}\setminus Z$ consists of (non-necessarily isolated) ordinary double points only. Then there exists an "explicit" closed positive $(1, 1)$ current $\omega$ with local potentials on $X$ with the following properties.
\begin{itemize} 

\item It is a non-singular Kähler metric when restricted to $X_{\rm reg}$, the set of regular points of $X$.

\item It is an orbifold Kähler metric at each point of $X_0\cap X_{\rm sing}$, where $X_0= X\setminus Z$.

\item Let $\pi: \wX\to X$ be a resolution of singularities of $X$, and consider \[\cJ_\pi:= \pi^\star\omega^3/\omega_{\wX}^3\]
where $\omega_{\wX}$ is a (Kähler) metric on the non-singular model $\wX$ of $X$. Then $\cJ_\pi$ is in $L^p(\wX, \omega_{\wX})$ for some $p> 1$.
\end{itemize}
\end{thm}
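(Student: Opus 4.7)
The plan is to perturb the given Kähler form $\om_X$ near each transversal ordinary double point by an explicit orbifold potential, glue the pieces together, and then verify the $L^p$-integrability on a log resolution by exploiting the crepancy of the minimal resolution of the $A_1$ singularity.

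At each point $x\in X_0\cap X_{\rm sing}$ the hypothesis provides a local orbifold chart $p_i: V_i\to U_i$ with $V_i\subset \C^3$ open and $\Z_2$ acting linearly as $(u,v,w)\mapsto (u,-v,-w)$, whose fixed locus $\{v=w=0\}$ maps onto $X_{\rm sing}\cap U_i$. The $\Z_2$-invariant function $\phi_i(u,v,w):=|u|^2+|v|^2+|w|^2$ descends to a continuous function $\widetilde{\phi}_i$ on $U_i$, smooth on $(U_i)_{\rm reg}$, whose lift to $V_i$ is the standard Euclidean Kähler potential. Covering the compact set $X_0\cap X_{\rm sing}$ by finitely many such charts (shrunk so that $U_i\cap U_j\subset X_{\rm reg}$) and picking smooth cut-off functions $\chi_i$ supported in $U_i$ with $\chi_i\equiv 1$ on a smaller neighborhood of $U_i\cap X_{\rm sing}$, I would set
\[ \om := \om_X + \e \, dd^c\Big(\sum_i \chi_i\,\widetilde{\phi}_i\Big) \]
for $\e>0$ small. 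The positivity check splits into three cases: on $X_{\rm reg}$ outside the $\chi_i$-supports one has $\om=\om_X$; inside the region $\{\chi_i=1\}$ the pull-back reads $p_i^\star \om = p_i^\star \om_X + \e\, dd^c|z|^2$ on $V_i$, a sum of a smooth semipositive form and the strictly positive Euclidean form; in the transition zones, smoothness of $\widetilde{\phi}_i$ on $(U_i)_{\rm reg}$ allows one to absorb the cross-terms by choosing $\e$ small. Hence $\om$ is Kähler on $X_{\rm reg}$ and an orbifold Kähler metric at each point of $X_0\cap X_{\rm sing}$.

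For the $L^p$-bound on $\cJ_\pi$, choose the log resolution $\pi:\wX\to X$ to factor through the minimal (crepant) resolution $\widetilde S\to\C^2/\Z_2$ inside each chart, where $U_i\simeq \C\times (\C^2/\Z_2)$. In a blow-up chart $(u_1,a,b)$ where $\pi$ reads $(u_1,a,b)\mapsto (u_1,\,a,\,ab^2,\,ab)\in\C\times\{xy=z^2\}$, the pulled-back orbifold potential becomes $\Phi:=|u_1|^2+|a|(1+|b|^2)$. A direct computation of the Hermitian matrix of $dd^c\Phi$ shows that its determinant in $(a,b)$ equals the positive constant $\tfrac14$, so that $(\pi^\star\om)^3$ is a Bedford-Taylor positive measure that is comparable, up to a bounded factor, to the Euclidean volume $du_1 d\bar u_1\wedge da d\bar a\wedge db d\bar b$ on $\wX$. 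This reflects the crepancy of the $A_1$-resolution. Near the finite set $Z$ the cut-offs vanish, so $\pi^\star\om=\pi^\star\om_X$ is smooth and semipositive on $\wX$ with bounded third power relative to $\om_{\wX}^3$. Combining both regions, $\cJ_\pi\in L^\infty(\wX)\subset L^p(\wX,\om_{\wX})$ for every $p>1$.

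The main technical point lies in the previous computation: since the pulled-back potential $|a|(1+|b|^2)$ is only of class $C^1$, the measure $(\pi^\star\om)^3$ is \emph{a priori} just a Bedford-Taylor measure, and one must verify that it is in fact comparable to a smooth volume form on $\wX$. The fact that this works is precisely the crepancy of the minimal resolution of the $A_1$ singularity, which motivates the restriction to ordinary double points in the statement of the theorem; the argument would not go through for higher $A_n$ singularities or for ones with non-trivial discrepancies, since the pulled-back volume would then vanish or blow up along the exceptional divisor.
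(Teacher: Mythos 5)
Your construction differs from the paper's in the choice of orbifold potential, and this difference is not cosmetic: the paper deliberately avoids the chart-dependent potential you use. You take $\widetilde\phi_i$ to be the descent of the Euclidean potential $|u|^2+|v|^2+|w|^2$ from the local uniformizing chart $V_i$, whereas the paper defines an intrinsic potential $\varphi_x:=\bigl(\sum|\partial f_i/\partial z_j|^2\bigr)^{1/2}$ built from the ideal of a local embedding $(X,x)\hookrightarrow(\C^N,0)$, with $dd^c\varphi_x$ comparable to the Euclidean orbifold metric by Claim~\ref{nothing}. The point of the intrinsic choice is that it yields, for free, the two-sided comparisons $C^{-1}\leq \varphi_{x_1}/\varphi_{x_2}\leq C$ and $C^{-1}dd^c\varphi_{x_2}\leq dd^c\varphi_{x_1}\leq C\,dd^c\varphi_{x_2}$ on overlaps, so that the gluing via partition of unity requires no absorption lemma at all. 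Your chart-dependent potential does not come with such comparability, which is why you are forced to introduce an extra parameter $\varepsilon$ and to argue that the cross-terms $d\chi_i\wedge d^c\widetilde\phi_i$ can be absorbed into $\omega_X$. This is considerably more delicate than you acknowledge, because $p_i^\star\omega_X$ degenerates to order two in the normal directions to the fixed locus, so the absorption is only possible after a careful eigenvalue comparison in the uniformizing chart; the single phrase ``smoothness of $\widetilde\phi_i$ allows one to absorb the cross-terms'' does not address this degeneration.

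There are also two concrete gaps. First, the assumption ``shrunk so that $U_i\cap U_j\subset X_{\rm reg}$'' is generically unsatisfiable: the transversal $A_1$ locus is a compact curve, and finitely many charts covering it with pairwise overlaps avoiding the curve would disconnect it. Dropping this hypothesis requires you to deal with two overlapping orbifold potentials $\widetilde\phi_i$ and $\widetilde\phi_j$ defined via a priori unrelated uniformizations, and the cross-terms involving $\chi_i$ and $\widetilde\phi_j$ (for $j\neq i$) are never discussed; this is precisely the situation the paper's intrinsic potential is designed to handle. Second, in verifying the $L^p$ bound you compute only the pure term $(dd^c\Phi)^3$ via the determinant $\tfrac14$, but $\pi^\star\omega^3$ also contains mixed terms $\pi^\star\omega_X^k\wedge(dd^c\Phi)^{3-k}$ and terms with derivatives of the cut-offs; these must be checked separately, since $dd^c\Phi$ is unbounded as a $(1,1)$-form (its $a\bar a$ component is $\sim|a|^{-1}$) even though its top power has bounded density.

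Finally, your closing explanation that the restriction to $A_1$ comes from crepancy is not the paper's reason and is, in fact, misleading: all $A_n$ singularities are crepant Du Val singularities. As the paper explains in Remark~\ref{rk1}, the actual obstruction for $A_n$, $n\geq 2$, is that the candidate potential $(|u|^{2(n+1)}+|v|^{2(n+1)}+|uv|^{2n})^{1/(n+1)}$ — the pull-back of $|\nabla f|^{2/(n+1)}$ — fails to be plurisubharmonic. Your chart-dependent potential $|u|^2+|v|^2$ remains psh for all $A_n$, so if your construction really went through it would prove a stronger statement than the theorem; this should have been a warning sign that the gluing step is more subtle than presented.
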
 

\noindent We will only point out next the main ideas of the proof, and leave some of the -rather tedious- calculations to the interested readers.   
The main observation is the following simple fact (which unfortunately seems to be specific to the case of 
$A_1$-singularities).

\begin{claim}\label{nothing}
The $(1,1)$-form 
\[\omega_0:= dd^c(|u|^4+ |v|^4+ |uv|^2)^{\frac{1}{2}}\]
is equivalent to the Euclidean metric in $\C^2$ near the origin, i.e. there exists a positive constant $C>0$ such that 
\[C^{-1}\omega_{\rm euc}\leq \omega_0\leq C\omega_{\rm euc}\]
for all $z\in \C^2\setminus \{0\}$ such that $|z|\ll 1.$
\end{claim}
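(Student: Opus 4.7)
My plan is to exploit the scaling symmetry of $\rho := (|u|^4+|v|^4+|uv|^2)^{1/2}$ to reduce the problem to positivity on the unit sphere, where it becomes a routine algebraic check. First, note that $|uv|^2=|u|^2|v|^2$, so setting $f=|u|^4+|u|^2|v|^2+|v|^4$ we have $\rho=f^{1/2}$, which is real-analytic on $\mathbb{C}^2\setminus\{0\}$ and satisfies the homogeneity $\rho(\lambda u,\lambda v)=|\lambda|^2\rho(u,v)$ for every $\lambda\in\mathbb{C}$. Combined with the naturality of $dd^c$ under holomorphic pullback, this implies that the complex Hessian matrix $H(\rho):=(\partial_i\partial_{\bar j}\rho)_{i,j}$ is invariant under dilations $z\mapsto\lambda z$, i.e.\ it factors through the Hopf projection $\mathbb{C}^2\setminus\{0\}\to\mathbb{CP}^1$.

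Because $\mathbb{CP}^1$ is compact, to conclude the equivalence $C^{-1}\omega_{\rm euc}\le\omega_0\le C\,\omega_{\rm euc}$ (globally on $\mathbb{C}^2\setminus\{0\}$, hence in particular near $0$) it suffices to show that $H(\rho)$ is pointwise positive definite on the unit sphere $\{|u|^2+|v|^2=1\}$. Uniform upper and lower bounds for its eigenvalues then follow from continuity and compactness.

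The positivity check goes through the chain-rule identity
\[
2\sqrt{f}\cdot H(\rho)\;=\;H(f)\;-\;\frac{1}{2f}\,\partial f\otimes\bar\partial f\;=:\;M.
\]
A direct computation gives $\partial_u f=\bar u(2|u|^2+|v|^2)$, $\partial_v f=\bar v(2|v|^2+|u|^2)$, and
\[
H(f)=\begin{pmatrix}4|u|^2+|v|^2 & v\bar u\\ u\bar v & |u|^2+4|v|^2\end{pmatrix}.
\]
Restricting to the sphere $|u|^2+|v|^2=1$ and introducing the single parameter $x:=|u|^2|v|^2\in[0,1/4]$, the off-diagonal phase cancels from $|M_{12}|$ and one finds, after expansion,
\[
\operatorname{tr} M=\frac{3(2-x)}{2(1-x)},\qquad \det M=2(1+2x),
\]
both strictly positive on $[0,1/4]$. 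Hence $M$, and therefore $H(\rho)=M/(2\sqrt f)$, is positive definite on the sphere, which finishes the proof by the compactness argument above.

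The step I expect to be the main obstacle is verifying the formula for $\det M$: the numerator, computed as a polynomial in $x$, has degree three and a priori need not be non-negative, so the identity $\det M=2(1+2x)$ (after all lower-degree terms collapse via $(1-x)^2$ factoring out) looks somewhat miraculous and must be carried out carefully. This fragility is consistent with the authors' warning that the construction is specific to $A_1$-singularities: for analogous expressions associated to other quotient models, the cancellation producing a clean positive determinant fails.
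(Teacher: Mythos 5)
Your computation is correct, and I checked the key identities numerically and symbolically: with $a=|u|^2$, $b=|v|^2$ and $a+b=1$, one indeed finds $M_{12}=-\frac{3x}{2(1-x)}\,\bar u v$ and hence $|M_{12}|^2=\frac{9x^3}{4(1-x)^2}$, and after clearing denominators the numerator of $\det M$ collapses to $4(1+x-2x^2)=4(1-x)(1+2x)$, giving $\det M=2(1+2x)$ as you claimed. The reduction-to-the-sphere step via the degree-$2$ homogeneity $\rho(\lambda z)=|\lambda|^2\rho(z)$ is also correct: it implies $\partial_i\partial_{\bar j}\rho(\lambda z)=\partial_i\partial_{\bar j}\rho(z)$, so the complex Hessian factors through $\mathbb{CP}^1$ and compactness gives uniform two-sided bounds.

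That said, your route differs from the paper's. The paper avoids the trace/determinant computation entirely: it sets $\psi:=\sum_\ell|f_\ell|^2$ (here $f_1=u^2,\,f_2=v^2,\,f_3=uv$, so $\psi$ is the same $f$), observes that $\log\psi$ is plurisubharmonic, hence $\psi\,dd^c\psi\ge d\psi\wedge d^c\psi\ge 0$, and from this deduces the two-sided sandwich
\[
\alpha^2\,\frac{dd^c\psi}{\psi^{1-\alpha}}\ \le\ dd^c(\psi^\alpha)\ \le\ \alpha\,\frac{dd^c\psi}{\psi^{1-\alpha}},
\]
for any $0<\alpha<1$. With $\alpha=1/2$ this reduces the claim to the equivalence $dd^c\psi\sim|z|^2\omega_{\rm euc}$ and $\psi^{1/2}\sim|z|^2$ near the origin, both of which are easy (the first one is essentially the fact that $(u,v)\mapsto(u^2,v^2,uv)$ is an immersion away from $0$, plus the same homogeneity/compactness argument you use). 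The paper's method is slicker and generalizes as stated to any finite family $(f_\ell)$ with $\bigcap_\ell\{f_\ell=0\}=\{0\}$, at the cost of hiding the positivity of $dd^c(\psi^{1/2})$ in the abstract sandwich; your method is more explicit and self-contained, at the cost of a determinant computation that must be carried out carefully. Both are valid. Your observation that the positivity is fragile (the numerator is a cubic in $x$ that happens to factor with a clean positive quotient) aligns with Remark~\ref{rk1}, where it is noted that the analogous potential for $A_2$-singularities fails to have positive $dd^c$.
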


\noindent This can be verified by a direct computation; more generally, let $(f_{\ell})$ be a finite set of holomorphic functions
defined locally near $0\in \C^n$, such that $\{0\}= \cap _{\ell} \{f_{\ell}=0\}$. Fix a parameter $0<\alpha<1$, and set
$$
\vp(z):=\left[ \sum_{\ell} |f_{\ell}(z)|^2 \right]^{\alpha}=\psi^{\alpha},
\; \; \text{ with } \; \;
\psi=\sum_{\ell} |f_{\ell}(z)|^2  \geq 0.
$$
Using that $\log \psi$ is psh we notice that
$\psi dd^c \psi -d\psi \wedge d^c \psi \geq 0$, hence
$$
\alpha^2 \frac{dd^c \psi}{\psi^{1-\alpha}} \leq 
dd^c \vp=\alpha \frac{dd^c \psi}{\psi^{1-\alpha}}-\alpha(1-\alpha) \frac{d\psi \wedge d^c \psi}{\psi^{2-\alpha}}
\leq \alpha \frac{dd^c \psi}{\psi^{1-\alpha}}.
$$
It follows that 
the behaviour of $dd^c \vp$ near $0$ is comparable to that of $\displaystyle \frac{dd^c \psi}{\psi^{1-\alpha}}$. In particular, this settles Claim \ref{nothing}.
\smallskip

\noindent Next we have to "guess" a way to recover the orbifold metric from the equation 
\[Z_1Z_2- Z_3^2= 0\]
of $(S, 0)\times \C\subset \C^4$.
This is easy: if we denote by $f(Z)= Z_1Z_2- Z_3^2$, then 
 \begin{equation}\label{exp1}
 \omega_0+ dd^c|z|^2\simeq \iota^\star\big(dd^c|\nabla f|+ C\omega_{\rm euc}\big)
\end{equation}
where we use the notations
\[|\nabla f|:= \left(\sum_{i=1}^4|f_{, Z_i}|^2\right)^{\frac{1}{2}}, \qquad \omega_{\rm euc}:= dd^c(\sum|Z_i|^2)\]
as well as 
\[\iota (u, v, z):= (u^2, v^2, uv, z).\]
This leads to the solution: the metric required by Theorem \ref{A1} is constructed by gluing the potentials in \eqref{exp1} by a partition of unity. Concretely, consider a local embedding 
\[(X, x)\hookrightarrow (\C^N, 0)\]
of $X$ near an arbitrary point $x\in X$ and denote by 
\[\mathcal I_{x}= (f_1,\dots, f_k)\]
the ideal corresponding to the analytic subset in $(\C^N, 0)$. 

\noindent We define the function
\[\varphi_{x}:= \Big(\sum \Big|\frac{\partial f_i}{\partial z_j}\Big|^2\Big)^{\frac{1}{2}}\]
and then the next assertions hold true:
\begin{itemize}

\item On the overlapping sets corresponding to two points $x_1, x_2$, we have
\[C^{-1}\leq \frac{\varphi_{x_1}}{\varphi_{x_2}}\leq C\]
for some positive constant $C$.

\item If $x_1$ is an orbifold singularity, and if we choose the embedding 
\[(u, v, z)\to (u^2, v^2, uv, z)\]
then $dd^c \varphi_{x_1}$ is the same as $\omega_0$ in the Claim \ref{nothing}.

\item If $x_1$ is an orbifold singularity, then on overlapping sets we have
\[C^{-1}dd^c \varphi_{x_2}\leq dd^c \varphi_{x_1}\leq C dd^c \varphi_{x_2}\]
for any other point $x_2$ (orbifold or not). 
\end{itemize}
\medskip

\noindent The metric $\omega$ is obtained by gluing together a finite subset of potentials $(\varphi_{x_i})$ by the familiar formula
\[\omega:= C\omega_X+ dd^c(\sum_i \theta_i^2\varphi_{x_i})\]
where $C\gg 0$ is a large enough constant and $(\theta_i)$ is a partition of unity subordinate to the definition domains of
$(\varphi_{x_i})$. The verification of the items above consists in a direct computation -which we skip-, using the compatibility relations between the ideals $\displaystyle \mathcal I_{x_i}$
corresponding to the local embeddings of $X$ mentioned above. 

\medskip

\begin{rem}\label{rk1}
This method seems to break down already for $A_2$ singularities. Indeed, the natural potential to consider would be 
\[\phi:= (|u|^6+ |v|^6+ |uv|^4)^{\frac{1}{3}}\]
given by the pull-back of the norm of the gradient of 
\[(x,y,z)\to xy- z^3\]
raised to the appropriate power. But then its $dd^c$ is not positive definite.
\end{rem}


\section{The Bogomolov-Gieseker inequality}\label{chern}

\subsection{Geometric setup and statement of the main result}
\label{geo setup}
Let $(X, \omega_X)$ be a compact Kähler threefold with log-terminal singularities. According to \cite[Proposition~9.3]{GKKP} (cf also \cite[Lemma~5.8]{GK20}), there exists a finite set of points
$\{p_1,\dots, p_k\}\subset X$, such that in the complement of this set, the space $X$ has at worse quotient singularities.   

We equally consider a coherent, reflexive sheaf $\cF$ of rank $r$ on $X$, such that the following are satisfied.
\begin{enumerate}
\smallskip

\item[(a)] For each point $x\in X\setminus \{p_1,\dots, p_k\}$ there exists an open subset $x\in V\subset X$ containing $x$ together with a finite quasi-étale Galois cover $p: B\to V$ with $B$ smooth such that the bi-dual of the sheaf $p^\star \cF$ is a vector bundle. In other words, $\cF$ is a so-called $\Q$-sheaf
when restricted to $X\setminus \{p_1, \ldots, p_k\}$.
\smallskip

\item[(b)] The sheaf $\cF$ is $[\omega_X]$-stable.
\end{enumerate}

\bigskip

\noindent
{\it Cut-off functions.} Recall that the metric $\omega_X$ is assumed to be locally $dd^c$-exact. Therefore, for each $i=1,\dots, k$ let $U_i\Subset U_i'\Subset U_i'' \subset X$ be a collection of open neighborhoods of $p_i\in X$ such that $U_i''\hookrightarrow \mathbb C^{N_i}$ admits an embedding in an Euclidean space. Let us define $U:=\cup_i U_i$ (resp. $U':=\cup_i U_i'$ and $U'':=\cup_i U_i''$) and let us consider $\varphi_p$ be a potential of $\omega_X$ on $U''$, i.e. $\omega_X|_{U''}=dd^c \varphi_p$. 

We let  $\theta$ (resp. $\theta'$) be a smooth cut-off function on $X$ with support in $U$ (resp. $U''$), such that $\theta= 1$ on $\frac 12 U$ and $\mathrm{Supp}(\theta)\subset U$ (resp. $\theta'=1$ on $U'$ and $\mathrm{Supp}(\theta')\subset U''$). 

Next, we set $\phi:=-\theta'^2 \varphi_p$ so that
\begin{equation}\label{chern1}
\omega_X+ dd^c\phi |_{U'}= 0
\end{equation}
That is to say, locally near each of the points $p_i$ the form $\omega_X+ dd^c\phi$ vanishes, whereas far from $p_i$ it coincides with the metric $\omega_X$.

\begin{rem}\label{higher}
The result in \cite{GKKP} holds without any restriction on the dimension of $X$, and shows that $X$ has at worse quotient singularities in 
the complement of a set of codimension at least three. But as soon as $\dim(X)\geq 4$, this means that we have to exclude not only points, but
also positive dimensional subsets of $X$ so that the resulting space is locally an orbifold. Assume that $C$ is a one-dimensional analytic 
space, contained in the non-quotient singularities of $X$. It would be very interesting to decide wether there exists an open subset $U\subset X$
together with a smooth $3$-form $\Psi$ defined on $U$ such that
\[\omega^2_X+ d\Psi|_U= 0, \qquad \Psi_x= 0\]
for all $x\in C$. We refer to the arguments at the end of the current section for the motivation.
\end{rem}
\smallskip

\noindent Given the property (a) above, one can construct a Hermitian orbifold metric $h$ on $\cF|_{X\setminus U}$. For each $i\geq 1$ we denote by $\theta_i(\cF, h)$ the Chern-Weil forms induced 
by the metric $h$, and if $r$ is the generic rank of $\cF$, let 
\begin{equation}\label{chern2}
\Delta(\cF, h):= 2r\theta_2(\cF, h)- (r-1)\theta_1(\cF, h)^2
\end{equation}
be the corresponding discriminant $(2,2)$-form. We notice that by \eqref{chern1} the integral
\begin{equation}\label{chern3}
\int_{\Xr}\Delta(\cF, h)\wedge (\omega_X+ dd^c\phi)
\end{equation}
is convergent, where $X_{\rm reg}\subset X$ is the set of regular points of our space $X$. Moreover, the above quantity only depends on $\mathcal F$ and the cohomology class $[\omega_X]\in H^2(X,\mathbb R)$, cf e.g. \cite[\textsection~5]{GK20}. That it, it is independent of the  particular choice of $U \Subset U'$, $h$ or $\phi$ as long as $h|_{X\setminus U}$ is an orbifold metric on $\cF|_{X\setminus U}$ and \eqref{chern1} holds. 

Therefore, it is legitimate to set
\begin{equation}\label{chern44}
\Delta(\cF) \cdot [\omega_X]:=\int_{\Xr}\Delta(\cF, h)\wedge (\omega_X+ dd^c\phi)
\end{equation}

\smallskip

\noindent The main result we will establish here states as follows.

\begin{thm}[Bogomolov-Gieseker inequality]
\label{thmBG} Let $(X, \omega_X)$ be a 3-dimensional compact Kähler space with at most klt singularities, and let $\cF$ be a coherent, reflexive sheaf on $X$ such that 
the properties (a) and (b) above are satisfied. Then we have 
\[\Delta(\cF) \cdot [\omega_X]\geq 0.\]
\end{thm}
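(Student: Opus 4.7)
Following the blueprint of \cite{C++}, the plan is to produce an admissible Hermite-Einstein metric $h_\cF$ on $\cF$ with respect to a carefully chosen positive representative $\omega_\theta$ of the class $[\omega_X]$, apply Lübke's pointwise Bogomolov-Gieseker identity for Hermite-Einstein bundles on $X_{\rm reg}$, and then identify the resulting non-negative integral with $\Delta(\cF)\cdot[\omega_X]$ via integration by parts. The first task is to construct a positive $(1,1)$-current $\omega_\theta = \omega_X + dd^c \psi$ with bounded potential which is a smooth Kähler form on $X_{\rm reg}$, an orbifold Kähler metric on $X\setminus U$ for some small open neighborhood $U$ of the non-quotient locus $\{p_1,\ldots,p_k\}$, and which is smoothed to a form with a $\mathcal{C}^\infty$ ambient potential on $U$. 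Under the threefold ODP hypothesis of Theorem~\ref{A1} such a metric is produced directly; more generally one glues a smooth form near the $p_i$ with an orbifold Monge-Ampère metric of the kind furnished by Theorem~\ref{thmLT} on the orbifold locus. Crucially $\omega_\theta$ is arranged so that Property (P) holds on $X\setminus U$ (Lemma~\ref{cutoff}), so that Corollary~\ref{LI estimates theta}, the Harnack-type consequence of Proposition~\ref{propC}, is at our disposal for $\omega_\theta$.

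\emph{Hermite-Einstein step.} Because $\cF$ is $[\omega_X]$-stable and is a $\Q$-sheaf away from $\{p_i\}$, the machinery of \cite{C++} yields an admissible Hermite-Einstein metric $h_\cF$ on $\cF$ with respect to $\omega_\theta$, smooth on the locally free locus of $\cF$ inside $X_{\rm reg}$, with $L^p$ curvature bounds against $\omega_\theta^3$ for some $p>1$. The Einstein condition $i\Lambda_{\omega_\theta} F_{h_\cF} = \lambda\,\mathrm{Id}$ together with the classical pointwise Lübke identity yields
$$\Delta(\cF, h_\cF)\wedge \omega_\theta \;=\; c_r\,\bigl|F^{\perp}_{h_\cF}\bigr|^2_{\omega_\theta}\,\omega_\theta^3 \;\ge\; 0$$
almost everywhere on the locally free locus, for an explicit positive constant $c_r$. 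In particular the integral $\int_{X_{\rm reg}} \Delta(\cF, h_\cF)\wedge \omega_\theta$ lies in $[0,+\infty]$ and is in fact finite by the $L^p$ curvature control.

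\emph{Identification step.} The heart of the argument is the equality
$$\int_{X_{\rm reg}} \Delta(\cF, h_\cF)\wedge \omega_\theta \;=\; \int_{X_{\rm reg}}\Delta(\cF, h)\wedge(\omega_X + dd^c\phi),$$
whose right-hand side is by definition $\Delta(\cF)\cdot[\omega_X]$, as in~\eqref{chern44}. On the locally free orbifold locus, where both $h_\cF$ and $h$ are smooth, Bott-Chern secondary calculus produces a form $\eta$ and a bounded function $v$ on $X_{\rm reg}$ with $\Delta(\cF, h_\cF) - \Delta(\cF, h) = dd^c\eta$ and $\omega_\theta - (\omega_X + dd^c\phi) = dd^c v$. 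Truncating by a family of cut-offs $\chi_\delta$ of the combined bad set $\Xs \cup \{p_i\}$ and integrating by parts generates error terms of two kinds: those involving $d\chi_\delta$ or $dd^c\chi_\delta$ paired with $\Delta(\cF, h_\cF)$-type currents, which are controlled via Property (P) combined with the $L^p$ bounds on $F_{h_\cF}$; and bulk terms involving $v$ and $\eta$, controlled by the $L^\infty$ bound on $v$ furnished by Corollary~\ref{LI estimates theta} together with an analogous control on $\eta$ extracted from the \cite{C++} estimates. Letting $\delta \to 0$ annihilates all error terms and produces the equality, and hence $\Delta(\cF)\cdot[\omega_X]\ge 0$.

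\emph{Main obstacle.} The positivity in the Hermite-Einstein step is classical; the real difficulty lies in the identification step, namely justifying the integration by parts both through the codimension-two orbifold strata (where $\omega_\theta$ is merely an orbifold metric and $\cF$ may fail to be locally free) and through the isolated non-quotient points $\{p_i\}$ (where $\omega_\theta$ has been smoothed out but neither the orbifold structure on $\cF$ nor the reference metric $h$ extends). This is exactly where the Green-function machinery of Section~\ref{sec Green}, powered by the recent \cite{GPSS22} estimates, and the Harnack-type Proposition~\ref{propC} become indispensable. The restriction to threefolds (Remark~\ref{higher}) enters at this very point, since then the non-quotient locus is zero-dimensional and one only needs to kill $v$ near a finite set; in higher dimension one would need an analogous vanishing along a positive-dimensional subset, which is currently out of reach.
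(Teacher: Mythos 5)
Your high-level blueprint — build an interpolating metric $\omega_\theta$, produce an admissible Hermite-Einstein metric via \cite{C++}, apply Lübke positivity pointwise, and close by integration by parts — is the paper's strategy. But two pieces of your plan are wrong or underspecified in ways that hide the real work. First, producing $\omega_\theta$ by ``gluing a smooth form near the $p_i$ with an orbifold Monge-Ampère metric of the kind furnished by Theorem~\ref{thmLT}'' cannot work as stated: Theorem~\ref{thmLT} requires $X$ projective, which is not assumed here, and nothing so heavy is needed. Since $X\setminus \frac12 U$ is relatively compact in $X_{\rm orb}$, one just patches local orbifold Kähler potentials by a partition of unity and glues that to $C\omega_X$ near the $p_i$; the delicate part of Section~\ref{ssec 1} is not the construction of $\omega_\theta$ but verifying the Green kernel estimates (G.1)--(G.4) for it, which takes an approximation argument on a resolution plus $L^{1+\ep_0}$ control on the density (Claim~\ref{claim Lp}).

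Second, your identification step is where the argument actually lives, and ``error terms controlled via Property~(P) combined with the $L^p$ curvature bounds'' is not sufficient. The $d\chi_\delta$-terms you need to kill involve the connection form of $h_\cF$ paired with the curvature, not the curvature alone, and a priori one has no control on $h_\cF$ near the orbifold strata (the closure of $\{0<\theta<1\}$). The chain of results that makes this work is: Corollary~\ref{LI estimates theta} (the Harnack estimate, applied to $|\tau|_{h_\cF^\star}^{2\ep}$, not to the relative potential $v$, which is trivially bounded) gives the quasi-isometry $C^{-1}h_0 \le h_\cE \le C h_0$ of Theorem~\ref{quasi orbi}; this feeds into Theorem~\ref{end,I}, the $\dbar$-closedness of $\Delta(\cF, h_\cF)$ on the orbifold locus, whose proof requires a very particular $\log\log$ cut-off and the explicit integral estimate of Lemma~\ref{flem1}. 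Your proposal does not mention the $\dbar$-closedness at all. Finally, near the $p_i$ the vanishing of the boundary term rests on the $L^2$ (not just $L^p$) curvature bound from Theorem~\ref{C++} together with a cut-off $\Xi_\ep(\log\log(1/|Z|^2))$ chosen so that $|\partial\chi_\ep\wedge\dbar\varphi_p|_{\omega_X}$ stays uniformly bounded, exploiting that the local potential $\varphi_p$ can be taken with $|\partial\varphi_p|=O(|Z|)$; this is the point (cf.~\eqref{chern8}) that forces the use of $\omega_\theta$ rather than a global canonical metric $\omega$.
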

We remark that this settles a conjecture proposed by Campana-Höring-Peternell in \cite{CHPad}. Theorem \ref{thm1} was proved in \cite{C++}
under the additional assumption that $X$ admits a \emph{partial resolution}. Here our arguments are purely analytic. Other than a few important technicalities, the main new input here
are the results in the previous sections, especially Proposition \ref{sup estimate}.  
\medskip

\begin{rem}
\label{semistable}
It is very likely that the Bogomolov-Gieseker inequality $\Delta(\cF)\cdot [\om_X] \ge 0$ holds in the more general case of an $[\omega_X]$-semistable sheaf $\cF$. A possible approach would be to use the Jordan-Hölder filtration and elementary Chern classes computations, as in \cite[\textsection~3.B]{CGG} (see also the references therein).
\end{rem}
\medskip

In case the inequality in Theorem~\ref{thm1} is an equality, i.e. when $\Delta(\cF) \cdot [\omega_X]=0$, then $\cF$ enjoys some additional properties thanks to the result below. 

\begin{cor}[Equality case]
\label{equality}
Let $(X,\omega_X, \cF)$ as in Theorem~\ref{thm1} above. Assume that $\Delta(\cF) \cdot [\omega_X]=0$. Then the following holds.
\begin{enumerate}[label=$\bullet$]
\item  $\cF|_{X_{\rm reg}}$ is locally free and projectively flat. 
\item There exists a finite, quasi-étale Galois cover $p:Y\to X$ such that $\mathbb P(p^\star(\cF|_{X_{\rm reg}}))$ extends to a locally trivial $\mathbb P^{r-1}$-bundle over $Y$. 
\end{enumerate}
\end{cor}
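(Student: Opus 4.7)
\textbf{Proof plan for Corollary~\ref{equality}.} The corollary contains two assertions: projective flatness of $\cF|_{X_{\rm reg}}$, and extension of $\PP(\cF|_{X_{\rm reg}})$ to a locally trivial projective bundle over a suitable quasi-étale cover. My plan is to deduce the first by inspecting the equality case of the pointwise Hermite--Einstein identity underlying the proof of Theorem~\ref{thmBG}, and the second by invoking the extension machinery available for klt spaces.

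\emph{From the equality to projective flatness.} The intersection number computed in Theorem~\ref{thmBG} is the integral
\[\Delta(\cF)\cdot[\omega_X] \,=\, \int_{X_{\rm reg}} \Delta(\cF, h_\cF) \wedge (\omega_X + dd^c\phi),\]
where $h_\cF$ is the admissible Hermite--Einstein metric on $(\cF, \omega_\theta)$ constructed in \cite{C++}. On the locally free locus of $\cF$, the standard pointwise identity
\[\Delta(\cF, h_\cF) \wedge \omega_\theta \,=\, c_r \, \big|F_{h_\cF}^{\circ}\big|^2_{\omega_\theta} \, \omega_\theta^n, \qquad c_r > 0,\]
with $F_{h_\cF}^\circ$ the trace-free part of the curvature, combined with the $L^\infty$ and Harnack estimates from Proposition~\ref{propC} (as exploited in the proof of Theorem~\ref{thmBG}), makes the discriminant integrand pointwise non-negative. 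Setting the integral to zero then forces $F_{h_\cF}^\circ \equiv 0$ almost everywhere, and hence identically on the locally free locus of $\cF|_{X_{\rm reg}}$ by smoothness of $h_\cF$ there. Bando--Siu's removable singularities theorem \cite{BS94} then extends the resulting flat $\mathrm{PGL}_r(\C)$-structure across the codimension-two non-locally-free locus of $\cF$ inside $X_{\rm reg}$, so that $\cF|_{X_{\rm reg}}$ is locally free and $\PP(\cF|_{X_{\rm reg}})$ is genuinely projectively flat.

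\emph{Extension after a quasi-étale cover.} The flat projective bundle $\PP(\cF|_{X_{\rm reg}})$ is classified by a representation $\rho: \pi_1(X_{\rm reg}) \to \mathrm{PGL}_r(\C)$. To globalize it to $X$, I would invoke the fundamental-group structure theorem of Greb--Kebekus--Peternell for klt spaces, itself resting on Braun's theorem on finiteness of local fundamental groups of klt singularities, which provides a finite quasi-étale Galois cover $p: Y \to X$ such that the canonical map $\pi_1(Y_{\rm reg}) \to \pi_1(Y)$ is an isomorphism. The pulled-back representation $\rho \circ p_\ast$ then factors through $\pi_1(Y)$ and defines a flat $\mathrm{PGL}_r$-bundle on $Y$ whose restriction to $p^{-1}(X_{\rm reg})$ agrees with $p^\star \PP(\cF|_{X_{\rm reg}})$. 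This is the desired locally trivial $\PP^{r-1}$-bundle.

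\emph{Main obstacle.} The delicate analytic point lies entirely in the first step: one has to verify that the HE identity written in terms of $\omega_\theta$ still controls the sign of the integrand when one integrates against the cohomologically equivalent representative $\omega_X + dd^c\phi$. This amounts to handling the discrepancy between these two closed $(1,1)$-forms together with the contribution of the cut-off $\phi$ near the non-orbifold singularities, which is precisely where the machinery of Section~\ref{sec Green} and the estimates surrounding $\omega_\theta$ come into play. The extension step is by comparison formal, contingent only on having a Kähler-analytic version of the Greb--Kebekus--Peternell theorem, which should follow from the corresponding finiteness of local fundamental groups in the analytic category.
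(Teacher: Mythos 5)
Your plan is broadly the same as the paper's, and the second part (quasi-étale cover via a Greb--Kebekus--Peternell type theorem, then flat descent) is essentially what the paper does, up to the fact that the paper carefully sources the Kähler-analytic version of the maximally quasi-étale cover via \cite{CGGN}, \cite{GKP16} and Fujino's extension of \cite{BCHM}, and that the representation obtained from the Hermite--Einstein metric lands in $\mathrm{PU}(r)$, not just $\mathrm{PGL}_r(\C)$.

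The first part has a real gap. The pointwise non-negativity of $\Delta(\cF,h_\cF)\wedge\omega_\theta$ follows directly from the Hermite--Einstein identity (Kobayashi's standard computation) and has nothing to do with the Harnack estimates of Proposition~\ref{propC}; the role of those estimates in the paper is to establish the $C^0$ comparison in Theorem~\ref{quasi orbi} and the $\dbar$-closedness in Theorem~\ref{end,I}, which in turn feed into the decomposition \eqref{split}. More seriously, once you know $\cF|_{X_{\rm reg}\setminus\{p_1,\ldots,p_k\}}$ is projectively hermitian flat, you assert that ``Bando--Siu's removable singularities theorem'' extends the flat $\mathrm{PGL}_r$-structure across the non-locally-free locus and forces $\cF|_{X_{\rm reg}}$ to be locally free. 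That is not what Bando--Siu gives you: their theorem extends an admissible Hermite--Einstein metric across a complex-codimension-two subset, but local freeness of the reflexive sheaf at the remaining points $p_i\in X_{\rm reg}$ (which form a finite set, hence codimension three, not two, in a threefold) requires a separate argument. The paper's argument is elementary and necessary: for a contractible Stein neighborhood $U$ of $p_i$, the punctured neighborhood $U^\circ=U\setminus\{p_i\}$ is simply connected, so the flat projective bundle $\PP(\cF|_{U^\circ})$ trivializes, giving $\cF|_{U^\circ}\simeq (L^\circ)^{\oplus r}$; reflexivity of $\cF$ and $i_*$ force $\cF|_U\simeq L^{\oplus r}$ with $L$ a reflexive rank-one sheaf on the smooth $U$, hence a line bundle, which is then trivial since $U$ is Stein and contractible. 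Without this step, your proof does not actually establish local freeness of $\cF$ on all of $X_{\rm reg}$, and the subsequent extension over the cover $Y$ (which the paper also needs the same trick for, on $Y_{\rm reg}$) would not go through.
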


\begin{rem}
In the second item, one cannot expect that $p^{[*]}\cF$ itself be locally free in general. Indeed, if $X=\{xy=zt\}\subset \mathbb C^4$ is a cone over a smooth quadric surface, then any quasi-étale cover of $X$ is étale. Next, it is well-known that the Weil divisor $D=(x=z=0)$ is not Cartier, i.e. $L:=\mathcal O_X(D)$ is not locally free. Now consider $\cF:=L^{\oplus 2}$ which is projectively flat on $X_{\rm reg}$ but not locally free even after finite (quasi-)étale covers. One can easily globalize the example. 
\end{rem}
The rest of this section is organized as follows. 
\begin{enumerate}[label=$\circ$]
\item In \textsection~\ref{ssec 1}, we  construct a Kähler metric $\omega_\theta\in [C\omega_X]$ for some large constant $C>0$. The metric $\omega_\theta$ is compatible with the orbifold structure in a complement of an open subset $U$ containing the set $\{p_1,\dots, p_k\}$ and such that locally near each of the $p_i$ it is equal to $(C+1)\omega_X$. 
Anyway, the main results in \cite{GPSS22} will be used at this point in order to show that the metric $\omega_\theta$ verifies the necessary hypotheses so that its corresponding Green kernel has the properties ${\rm (G.i)}$, for $i=1,\dots, 4$ listed in Section~\ref{sec Green}.
\item In \textsection~\ref{ssec 2}, we use \cite{C++} to construct a HE metric $h_{\cF}$ on $\cF$ with respect to $\omega_\theta$, which satisfies a few estimates. 
\item In  \textsection~\ref{ssec 3}, we refine these estimates to show that $h_\cF$ is quasi-isometric to an orbifold metric away from the set $\{p_1,\dots, p_k\}$. 
\item In   \textsection~\ref{ssec 4}, we capitalize on the results from the previous section to show that the current $\Delta(\cF, h_\cF)$ is $\dbar$-closed away from the set $\{p_1,\dots, p_k\}$. This is the key property needed in order to deal with the 
boundary terms which appear in the evaluation of the Bogomolov-Gieseker discriminant \eqref{chern3}. 
\item In \textsection~\ref{ssec 5}, we complete the proof of Theorem~\ref{thmBG}. 
\item In \textsection~\ref{ssec 6}, we give the proof of Corollary~\ref{equality}. 
\end{enumerate}

\subsection{Construction of an interpolating metric and its main properties} 
\label{ssec 1}
%
Since $X\setminus \frac 12 U \Subset X_{\rm orb}$, one can use a partition of unity to construct an orbifold Kähler metric 
\[\omega:= \omega_X+ dd^c\varphi_{\rm orb}\]
on the orbifold $X\setminus \frac 12 U$. The metric we are considering in this section is defined by the expression
\begin{equation}\label{chern4}
\omega_\theta:= C\omega_X+ dd^c\left(\theta^2\varphi_p+ (1-\theta)^2\varphi_{\rm orb}\right)
\end{equation}
where $C> 0$ is a large enough positive constant.

\bigskip

The main result of this section is Proposition~\ref{lemme1} which leads to the crucial Corollary~\ref{LI estimates theta} further below. Proposition~\ref{lemme1} describes some fine geometric properties of $\omega_\theta$. Although the first item below is not very difficult to check, the second one requires a fair amount of work. The rationale is that $\omega_\theta$ is constructed by gluing two metrics ($\omega_X$ and $\omega$) with significantly different behavior, hence the analysis of the Monge-Ampère operator of smooth approximants of $\omega_\theta$ becomes quite tricky. 

\begin{prop}\label{lemme1} The $(1,1)$-form $\omega_\theta$ defined in \eqref{chern4} satisfies the following:
\begin{enumerate}[label=$(\roman*)$]
\item It is quasi-isometric with the Hermitian metric $\omega_X+ (1-\theta)^2\omega$, so in particular, it is positive definite.
\item For any $x\in X_{\rm reg}$, there exists a Green function $G_x$ for $\omega_\theta$ satisfying the conditions (G.1)-(G.4) from \textsection~\ref{sec Green}.  
\end{enumerate}
\end{prop}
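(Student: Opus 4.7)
The plan for (i) is a direct computation expanding $dd^c u$ for $u := \theta^2\varphi_p + (1-\theta)^2\varphi_{\rm orb}$; for (ii) I will build smooth K\"ahler approximants $\omega_{\theta,\e}$ of $\pi^\star\omega_\theta$ on a log resolution $\pi\colon\wX\to X$ whose volume forms are uniformly $L^p$-bounded, so that the Green's function estimates of Guo-Phong-Song-Sturm (Theorem~\ref{GPSS}) apply uniformly in $\e$ and can be passed to the limit along the lines of Section~\ref{sec Green}.

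For (i), two applications of the product rule for $dd^c$ yield
\[ \omega_\theta = (C-1+2\theta)\,\omega_X + (1-\theta)^2\,\omega + R, \]
where $R$ collects the cross terms $\varphi_p\,dd^c\theta^2 + d\theta^2\wedge d^c\varphi_p + d\varphi_p\wedge d^c\theta^2$ coming from $\theta^2\varphi_p$, together with the analogous contributions from $(1-\theta)^2\varphi_{\rm orb}$. Since $\Supp(d\theta)$ is a compact subset of $X_{\rm orb}\cap(U\setminus\tfrac12 U)$ on which $\varphi_p$, $\varphi_{\rm orb}$ and their derivatives are uniformly bounded in the orbifold sense, the remainder $R$ has uniformly bounded orbifold $C^0$-norm. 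Choosing $C$ large enough that $(C-1)\omega_X$ dominates $R$ pointwise on $\Supp(d\theta)$ yields both the positivity of $\omega_\theta$ and the asserted quasi-isometry with $\omega_X+(1-\theta)^2\omega$ on $X_{\rm reg}$.

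For (ii), I follow the scheme of Section~\ref{sec canonical}: I regularize $\pi^\star\varphi_{\rm orb}$ on $\wX$ to a smooth function $\varphi_{{\rm orb},\e}$, for instance by solving a perturbed Monge-Amp\`ere equation of the type \eqref{MAep} whose right-hand side mollifies the (klt, hence $L^p$-integrable) density of $\pi^\star\omega^n$ with respect to $\omega_{\wX}^n$, and set
\[ \omega_{\theta,\e} := C\,\bigl(\pi^\star\omega_X + \e\,\omega_{\wX}\bigr) + dd^c\bigl(\theta^2\,\pi^\star\varphi_p + (1-\theta)^2\,\varphi_{{\rm orb},\e}\bigr). \]
Applying the expansion from (i) fiberwise in $\e$ shows, for $C$ large, that $\omega_{\theta,\e}$ is a smooth K\"ahler metric on $\wX$ which converges locally smoothly to $\pi^\star\omega_\theta$ on $\pi^{-1}(X_{\rm reg})$. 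Expanding the volume form $\omega_{\theta,\e}^n$ as a sum of wedge products, and combining the $L^\infty$-boundedness of $(\pi^\star\omega_X)^n/\omega_{\wX}^n$ on $\wX$ with the uniform $L^p$-bound in the spirit of \eqref{Lp unif} for the regularization of $\omega$, yields
\[ \|\omega_{\theta,\e}^n/\omega_{\wX}^n\|_{L^p(\omega_{\wX})}\le C_0 \qquad (p>1), \]
independently of $\e$.

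With these two ingredients in hand, Theorem~\ref{GPSS} applied uniformly in $\e$ provides uniform bounds on the Green's functions $G_x^\e$ of $(\wX,\omega_{\theta,\e})$ in $L^{1+\delta}$, on $|\nabla G_x^\e|_{\omega_{\theta,\e}}$ in $L^{1+\delta}$, and on the weighted integrals appearing in (G.4). A diagonal Kondrachov compactness argument on $X_{\rm reg}$ combined with Fatou's lemma, exactly as in Section~\ref{sec Green}, then produces a subsequential limit $G_x\in L^1_{\rm loc}(X_{\rm reg})$ satisfying (G.1), (G.3) and (G.4); property (G.2) follows from the locally smooth convergence $\omega_{\theta,\e}\to\pi^\star\omega_\theta$ on $\pi^{-1}(X_{\rm reg})$. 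The main technical obstacle I anticipate is the simultaneous realization of the smooth convergence and the uniform $L^p$-bound on the volume form: the interpolation mixes the smooth form $\omega_X$ near the non-quotient singularities covered by $U$ with the orbifold metric $\omega$, whose $L^p$-regularity on the resolution is only accessible through a separate Monge-Amp\`ere-type approximation.
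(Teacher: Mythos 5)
Your overall plan matches the paper's: a direct expansion of $dd^c$ for (i), and smooth K\"ahler approximants on a resolution with uniformly $L^p$-controlled volume densities so that Theorem~\ref{GPSS} applies and Kondrachov plus Fatou produce the limiting Green's function. The gaps are in how you execute both parts.

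For (i), the problem is in the assertion that $R$, though only ``uniformly bounded in the orbifold sense,'' can be dominated pointwise by $(C-1)\omega_X$. The support of $d\theta$ can meet the (quotient) singular set $X_{\rm sing}\setminus\{p_1,\dots,p_k\}$, and at those points $\omega_X$ is degenerate in the orbifold uniformizing chart while $d\varphi_{\rm orb}\wedge d^c\varphi_{\rm orb}$ is comparable to $\omega$, which is strictly larger. So a cross term like $d\theta\wedge d^c\varphi_{\rm orb}$ is \emph{not} bounded against $\omega_X$, no matter how large $C$ is, and your absorption fails exactly where the claimed quasi-isometry would degenerate. The paper's argument succeeds because the troublesome cross term appears with the extra factor $(1-\theta)$, namely $2(1-\theta)\,d\theta\wedge d^c\varphi_{\rm orb}$: by Cauchy--Schwarz this is controlled by $\ep(1-\theta)^2\,d\varphi_{\rm orb}\wedge d^c\varphi_{\rm orb}+\ep^{-1}d\theta\wedge d^c\theta\leq \ep C(1-\theta)^2\omega + \ep^{-1}C\omega_X$, and the $(1-\theta)$ factor is what lets you absorb into the $(1-\theta)^2\omega$ part of the comparison metric rather than into $\omega_X$. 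You must keep that factor explicit; treating $R$ as a lump and dominating by $\omega_X$ alone is not correct.

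For (ii), your high-level scheme is the right one, but there are two concrete problems with the specific approximant $\omega_{\theta,\e}:=C(\pi^\star\omega_X+\e\,\omega_{\wX})+dd^c\bigl(\theta^2\pi^\star\varphi_p+(1-\theta)^2\varphi_{{\rm orb},\e}\bigr)$. First, positivity: the absorption argument from (i) used that $\omega_X$ is K\"ahler on $X$; on $\wX$ the form $\pi^\star\omega_X$ degenerates along the exceptional divisor, and there is no reason the cross terms in $dd^c\bigl((1-\theta)^2\varphi_{{\rm orb},\e}\bigr)$ are absorbed by $\e\,\omega_{\wX}$ \emph{uniformly in $\e$}. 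Second, the uniform $L^p$-bound on $\omega_{\theta,\e}^3/\omega_{\wX}^3$: after expanding into wedge products, you need the mixed terms $(\pi^\star\omega_X)^k\wedge\omega^{3-k}/\omega_{\wX}^3$ (equivalently the $e^{f_k}$ of the paper) to lie in $L^{1+\e_0}$. This is not a consequence of the klt $L^p$-bound for $\omega^3$ alone; it is precisely Claim~\ref{claim Lp} and is proved in the paper via the Jacobian identity in orbifold charts. You identify this as ``the main technical obstacle'' but do not resolve it. The paper sidesteps the positivity issue entirely by \emph{not} gluing regularized potentials: it mollifies the densities $e^{f_k}$ directly to $F_{k,\e}$, solves a fresh Monge--Amp\`ere equation in the K\"ahler class $\{C\pi^\star\omega_X+\e\,\omega_{\wX}\}$ with RHS $\sum_k\binom{3}{k}\e^kF_{k,\e}\,\omega_{\wX}^3$ (so positivity is automatic by Yau), and then obtains uniform $C^0$ control from \cite{GZ11} and local $C^{2,\alpha}$ from \cite{Savin}. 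That reformulation is the missing ingredient that makes the approximation scheme actually close.
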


\noindent Before discussing the arguments for this statement, a quick observation:

\begin{rem}\label{Sob}
It is also the case that the manifold $(X_{\rm reg}, \omega_\theta)$ satisfies the Sobolev inequality: there exist constants $C> 0$ and $q> 1$ such that for any smooth function $f$ with compact support in $X_{\rm reg}$ we have 
\begin{equation}\label{chern5}
\Vert f\Vert_{L^{2q}(X, \omega_\theta)}^2\leq C\left(\Vert f\Vert_{L^{2}(X, \omega_\theta)}^2+ \Vert df\Vert_{L^{2}(X, \omega_\theta)}^2\right).
\end{equation} 
Since we do not really need this result, we will not prove it, but it can certainly be established along the arguments used for Proposition~\ref{lemme1}. We generously leave the details to the interested readers.
\end{rem}
\medskip

\begin{proof}[Proof of Proposition~\ref{lemme1}] We prove each item separately. \\

\noindent
{\bf Proof of $(i)$.} We simply do a direct calculation: the terms  
\[\varphi_p dd^c\theta^2+ \varphi_{\rm orb}dd^c(1-\theta)^2 \]
of the derivative of the potential defining $\omega_\theta$ are absorbed by $C\omega_X$, since $\varphi_{\rm orb}$ is bounded. The next ones
\[\theta^2 dd^c\varphi_p+ (1-\theta)^2dd^c\varphi_{\rm orb}= \big(\theta^2- (1-\theta)^2\big)\omega_X+ (1-\theta)^2\omega\]
to which we add $C\omega_X$ are quasi-isometric to the $\omega_X+ (1-\theta)^2\omega$, and finally the terms of type 
\[C\omega_X+ 2\theta d\theta\wedge d^c\varphi_p- 2(1-\theta)d\theta\wedge d^c\varphi_{\rm orb}\]
are also seen to be controlled by $\omega_X+ (1-\theta)^2\omega$, given that we have \[d\varphi_{\rm orb}\wedge d^c\varphi_{\rm orb}\leq 
C\omega\]
on the support of $d\theta$, as one sees easily e.g. in local uniformizing charts where $\omega$ becomes a smooth Kähler metric. 
\bigskip

\noindent
{\bf Proof of $(ii)$.} It would be sufficient to show that there exists a family of smooth Kähler metrics $\omega_\ep$ on a desingularization $\pi: \wX\to X$ such that 
\begin{enumerate}
\item When $\ep \to 0$, $\omega_\ep$ converges to $\pi^\star\omega_\theta$ weakly on $\wX$ and locally smoothly on $\pi^{-1}(X_{\rm reg})$.
\item There exists $p>1$ and a smooth volume form $dV_{\wX}$ on $\wX$ such that $\frac{\omega_\ep^3}{dV_{\wX}}$ is uniformly bounded in $L^p(\wX, dV_{\wX})$.  
\end{enumerate}
Indeed, it follows from \cite{GPSS22} (and the subsequent refinements \cite{GPSS24, GuedjTo24, Vu24} which removed the nondegeneracy assumption) that if we can approximate $\pi^\star\omega_\theta$ with the sequence $(\omega_\ep)_{\ep> 0}$ enjoying the properties 1. and 2. above, then Theorem~\ref{GPSS} holds true for the family of metrics $(\omega_\ep)_{\ep>0}$, and as a corollary, Proposition~\ref{lemme1} $(ii)$ above follows (see the beginning of Section~\ref{sec Green}). We proceed in two steps. 
\bigskip

\noindent
{\bf Step 1. Density of the measure $(\pi^\star\omega_\theta+\ep \wom)^3$.}
 The arguments we supply next are general, but we will only consider the $\dim= 3$ case here. Consider $\pi:\widehat X\to X$ a resolution of singularities of $X$. We fix a reference metric $\wom$ on $\widehat X$
and let $1> \ep> 0$ be a positive real number smaller than one. The functions $(f_k)$ in the following equality
\begin{equation}\label{chern09}
(\pi^\star \omega_\theta+ \ep\wom)^3= \sum_{k=0}^3 {3\choose k}\ep^ke^{f_k}\wom^3
\end{equation}
are defined by $\displaystyle e^{f_k}:= \frac{\wom^k\wedge \pi^\star \omega_\theta^{3-k}}{\wom^3}$. The main property we establish in this step is the following. 

\begin{claim}
\label{claim Lp}
 There exists $\ep_0> 0$ such that $e^{(1+\ep_0)f_k}\in L^{1}(\widehat X, \wom)$ for each $k=0, \ldots, 3$.
 \end{claim}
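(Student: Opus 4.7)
The plan is to exploit the interpolating structure of $\omega_\theta$, which by Proposition~\ref{lemme1}$(i)$ is quasi-isometric to $\omega_X + (1-\theta)^2\omega$, in order to reduce the $L^{1+\ep_0}$ bound on each $e^{f_k}$ to a family of bounds on mixed forms $\wom^{3-m}\wedge(\pi^\star\omega)^m$ on the orbifold locus, and then to handle those by passing to the local Galois cover at each quotient singularity together with the klt property of $X$.

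Concretely, the pointwise quasi-isometry gives
\[\wom^k \wedge (\pi^\star\omega_\theta)^{3-k} \le C\sum_{j=0}^{3-k}\binom{3-k}{j}\wom^k\wedge (\pi^\star\omega_X)^j \wedge(\pi^\star\omega)^{3-k-j}.\]
Since $\omega_X$ has smooth potentials in any local embedding of $X$, the form $\pi^\star\omega_X$ is smooth on $\wX$ and, by compactness, pointwise dominated by a constant multiple of $\wom$. Absorbing such factors reduces the claim to showing that $\wom^{3-m}\wedge(\pi^\star\omega)^m \in L^{1+\ep_0}(\wX,\wom^3)$ for each $m=0,1,2,3$. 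Near a quotient singularity $V\cong \C^3/G$ with Galois cover $q\colon\tilde V\to V$, the form $q^\star\omega$ extends smoothly to $\tilde V$; taking a smooth manifold $Y$ dominating both $\pi^{-1}(V)$ and $\tilde V$ (e.g.\ a resolution of the normalized fibre product), one obtains a finite Galois map $q_Y\colon Y\to\pi^{-1}(V)$ and a birational morphism $\pi_Y\colon Y\to\tilde V$ satisfying $q_Y^\star(\pi^\star\omega) = \pi_Y^\star(q^\star\omega)$, a smooth semipositive form on $Y$. Writing $q_Y^\star\wom^3 = |J_{q_Y}|^2\,\omega_Y^3$ for a smooth K\"ahler metric $\omega_Y$ on $Y$, the task becomes to show $|J_{q_Y}|^{-2}\in L^{1+\ep_0}(Y,\omega_Y^3)$, which follows from the standard klt argument since quotient singularities are canonical (all log discrepancies are non-negative).

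The main obstacle I expect lies in the intermediate cases $m=1,2$. The case $m=3$ is straightforward: the comparison $\omega^3\le C\mu_h$ on the orbifold locus (both being orbifold-smooth volume forms) combined with the klt discrepancy formula $\pi^\star\mu_h = \prod|s_j|^{2a_j}\,dV_{\wX}$ with $a_j>-1$ gives the $L^{1+\ep_0}$ bound directly. The mixed forms for $m=1,2$, however, encode trace-like information about the eigenvalues of $\pi^\star\omega$ relative to $\wom$ near the exceptional divisor of $\pi$, and one must verify---in local eigencoordinates after carefully resolving the ramification of $q_Y$---that the possible unbounded directions contribute only poles of order strictly less than $2$, keeping the forms $L^{1+\ep_0}$-integrable. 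The strict positivity of log discrepancies at quotient singularities is what ultimately drives the estimate, but a careful choice of $Y$ with simple normal crossing ramification and exceptional divisors is the technical heart of the argument.
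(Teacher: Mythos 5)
Your geometric setup (local uniformizing cover, resolution $Y$ of the fibre product, comparison of $q_Y^\star\wom$ and $q_Y^\star\pi^\star\omega_\theta$ with a smooth reference metric on $Y$) matches the paper's, which works with a resolution $\widehat B$ of $B\times_X \widehat X$ and the generically finite map $\widehat p\colon\widehat B\to\widehat X$. However, there is a genuine error in the last step. You write that after $q_Y^\star\wom^3 = |J_{q_Y}|^2\omega_Y^3$ ``the task becomes to show $|J_{q_Y}|^{-2}\in L^{1+\ep_0}(Y,\omega_Y^3)$.'' That requirement is both incorrect and, in general, false: if $q_Y$ is ramified along a divisor, $J_{q_Y}$ vanishes there to integer order $a\ge 1$, so $|J_{q_Y}|^{-2}\sim |z|^{-2a}$ is not even $L^1$ locally. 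The slip comes from forgetting the Jacobian factor in the change of variables. Correctly,
\begin{align*}
\deg(q_Y)\int_{\pi^{-1}(V')} e^{(1+\ep_0)f_k}\wom^3 &= \int_{q_Y^{-1}(\pi^{-1}(V'))} e^{(1+\ep_0)f_k\circ q_Y}\,|J_{q_Y}|^2\,\omega_Y^3,
\end{align*}
and since the pointwise bounds $q_Y^\star\wom\le C\omega_Y$, $q_Y^\star\pi^\star\omega_\theta\le C\omega_Y$ on compacts give $e^{f_k\circ q_Y}\,|J_{q_Y}|^2\le C_K$, the integrand is bounded by $C_K^{1+\ep_0}|J_{q_Y}|^{-2\ep_0}$. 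What is actually needed is $|J_{q_Y}|^{-2\ep_0}\in L^1_{\rm loc}$, a far weaker statement that holds for $\ep_0$ small by elementary order-of-vanishing considerations, with no appeal to klt or canonical singularities whatsoever. (Incidentally, your parenthetical ``quotient singularities are canonical'' is also inaccurate; they are klt but need not be canonical, e.g.\ $\frac{1}{3}(1,1)$.)

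A secondary remark: your decomposition of $\omega_\theta$ into mixed forms $\wom^{3-m}\wedge(\pi^\star\omega)^m$ and the separate treatment of $m=0,\dots,3$ is unnecessary. The paper sidesteps this by establishing the single estimate $\widehat p^\star\pi^\star\omega_\theta\le C\,\omega_{\widehat B}$ on compacts (which follows because $p^\star\omega_\theta$ is already a bounded form on the local cover $B$, since $\omega_\theta$ is orbifold there). This yields $e^{f_k\circ\widehat p}|J_{\widehat p}|^2\le C_K$ simultaneously for all $k$, and the rest of the argument is uniform in $k$. Your $m=3$ discussion via $\omega^3\le C\mu_h$ and the klt discrepancy formula is a correct but heavier route to the same local bound, and conflates the glued orbifold metric $\omega$ of Section~5 with the Monge--Amp\`ere solution of Section~2; locally the comparison is fine, but it is cleaner to just use that the orbifold pullback is bounded.
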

 
\begin{proof}[Proof of Claim~\ref{claim Lp}]
Recall that $\omega_\theta$ is quasi-isometric to $\omega_X+ (1-\theta)^2\omega$. In particular, the $\pi$-inverse image of the metric $\omega_\theta$ restricted to the set $\theta= 1$ (i.e. near 
the non-orbifold singularities) is bounded, and so is each of the functions $e^{f_k}$.

It remains to analyze what happens in the complement of some open subset containing the set $(p_i)$; we argue as follows. 
We consider a local uniformization $p: B\to V$ of $(X, x_0)$, where $x_0$ belongs to the set $X\setminus \{p_1,\dots, p_k\}$ and $B\subset \mathbb C^3$ is open. Define $\widehat B$ to be a resolution of singularities of the main component of $B \times_X\widehat X  $; it comes equipped with two maps
 \[
\begin{tikzcd}
\widehat B \arrow[d, "\pi_{B}", swap] \arrow[rr, "\widehat p"] && \widehat X \arrow[d,"\pi"]  \\
 B \arrow[r, " p", swap]    & V \arrow[hookrightarrow]{r} & X
 \end{tikzcd}
\]
where  $\pi_B$ is a modification and $\widehat p$ is generically finite between the manifolds $\widehat B$ and $\widehat X$. 
Let $J_{\widehat p}$ be the jacobian of $\widehat p$, viewed as holomorphic section of $K_{\widehat B/\widehat X}$. Once we fix a Kähler metric $\omega_{\widehat B}$ on $\widehat B$ one has the tautological formula
\begin{equation}
\label{jac}
\widehat p^\star\wom^3=|J_{\widehat p}|^2 \omega_{\widehat B}^3. 
\end{equation}
Given any compact subset $K\Subset \widehat B$ there are constants $C_1, C_2> 0$ so that we have 
\begin{equation}
\label{jac2}
\widehat p^\star\wom \le C_1 \omega_{\widehat B}, \quad \widehat p^\star\pi^\star\omega_{\theta} \le C_2\omega_{\widehat B}
\end{equation}
where the second identity follows from $\widehat p^\star\pi^\star\omega_{\theta}=\pi_B^*p^\star\omega_{\theta}$ while $p^\star\omega_{\theta}$ is a bounded $(1,1)$-form on $B$. Since $\wom^k\wedge \pi^\star \omega_\theta^{3-k}=e^{f_k}\wom^3$, \eqref{jac} and \eqref{jac2} imply
\begin{equation}\label{chern999}e^{f_k\circ \widehat p}|J_{\widehat p}|^2\leq C_K\end{equation}
on $K$, for some constant $C_K>0$ depending only on $K$. 
Now, let $V'\Subset V$ be any compact subset, let $L:=\pi^{-1}(V')$ and let $K:=\widehat p^{-1}(L)$. The sets $L$ and $K$ are compact by properness of $\pi$ and $\widehat p$, respectively. Since $\widehat p$ is generically finite, we have
\[\int_{L}e^{(1+ \eta)f_k}\wom^3\leq \int_{K}e^{(1+\eta)f_k\circ \widehat p}|J_{\widehat p}|^2\omega_{\widehat B}^3 \]
and this last quantity is smaller than
\[C_K^{1+\eta}\int_{K}|J_{\widehat p}|^{-2\eta}\omega_{\widehat B}^3\]
 by \eqref{chern999}. Finally, this integral is finite as soon as $0\leq \eta\ll 1$, and our claim is established.
\end{proof}

\noindent
{\bf Step 2. Construction of smooth approximants $\wom_\ep$.}
 We consider next a partition of unity $(\rho_i)_i$ subordinate to a finite covering of $\widehat X$ with coordinate sets, say $W_i$.
Consider the smooth function 
\begin{equation}\label{chern99}
F_{k, \ep}: = \sum_i\rho_ie^{f_k}\star K_{i, \ep}
\end{equation}
given by convolution with a smoothing kernel $K_{i, \ep}$ defined locally on each $W_i$. Then we have
\begin{equation}\label{chern101}\lim_\ep\Vert F_{k, \ep}- e^{f_k}\Vert_{L^{1+\ep_0}(\widehat X, \wom)}=0.\end{equation}
Next, we solve the Monge-Ampère equation for the class $\displaystyle \{C\pi^\star\omega_X+ \ep \wom\}$ and volume element 
$\displaystyle \sum_k {3\choose k}\ep^kF_{k, \ep}\wom^3$ -- call
$\wom_\ep$ the resulting solution so that 
\[\wom_\ep^3=c_\ep \sum_k {3\choose k}\ep^kF_{k, \ep}\wom^3\]
where $c_\ep$ is a harmless normalizing constant. 
\smallskip

\noindent Our next claim is that the family of Kähler metrics $(\wom_\ep)_{\ep> 0}$ satisfies the properties 1. and 2. stated at the beginning of the paragraph "Proof of $(ii)$". This is a consequence of the following two remarks, based on \cite[Theorem~C]{GZ11} and \cite[Theorem~1.3]{Savin}, respectively. 
\begin{enumerate}

\item[(a)] Assume that $\wom_\ep= C\pi^\star\omega_X+ \ep \wom+ dd^c\varphi_{\ep}$ is normalized by  $\max_{\widehat X}\varphi_{\ep}=0$. Then we have 
\[\sup_{\widehat X}|\varphi_{\ep}- \big(\theta^2\varphi_p+ (1-\theta)^2\varphi_{\rm orb}\big)\circ \pi|\leq C_\ep\]
where $C_\ep\to 0$ as $\ep\to 0$. We remark that here the parameter $\ep$ plays a double role: by adding $\ep\wom$ to $C\pi^\star\omega_X$ we transform it into a K\"ahler metric, and moreover it is used in the convolution. Luckily, the fact that 
$C\pi^\star\omega_X+ \ep\wom$ is approaching the boundary of the K\"ahler cone as 
$\ep\to 0$ \emph{does not affect the constants} in \cite{GZ11}. The main point is that the $L^{1+ \ep_0}$- norms above are finite, combined with \eqref{chern101}.

\item[(b)] Let $K\subset \widehat X$ be any compact subset contained in the inverse image of $X_{\rm reg}$. Then we have 
\[\|\varphi_{\ep}\|_{C^{2, \alpha}(K)}\leq C_K\]
for some constant independent of the parameter $\ep$. This is a direct application of \cite[Theorem~1.3]{Savin}  combined with the point (a) we have just discussed.
\end{enumerate}
The usual arguments in MA theory allow us to conclude.
\end{proof}  

The main application of Proposition~\ref{lemme1} is the Harnack type result described in Corollary~\ref{LI estimates theta} below. It requires the following intermediate result. 

\begin{lem}
\label{cutoff}
Let $x\in X\setminus \{p_1, \ldots, p_k\}$. Then, there exist $\ep>0$, a neighborhood $V$ of $x$ and a family of cut-off functions $(\rho_\delta)$ for $V_{\rm sing}=X_{\rm sing}\cap V$ such that 
\[\limsup_{\delta \to 0} \int_{V} \big( |\nabla \rho_\delta |_{\omega_{\theta}}^{2+\ep}+|\Delta_{\omega_\theta} \rho_\delta |^{1+\ep}\big) \omega_\theta^3 =0\]
\end{lem}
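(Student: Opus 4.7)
My plan is to separate two cases according to whether $x$ is smooth or a quotient singularity. If $x \in X_{\rm reg}$, pick $V \Subset X_{\rm reg}$ so that $V_{\rm sing} = \emptyset$ and set $\rho_\delta \equiv 1$; all integrals vanish identically. Otherwise, $x$ is a quotient singularity, and one can choose a local uniformization $p: B \to V$ with $B \subset \mathbb{C}^3$ a $G$-invariant open subset, $V = B/G$, and $G \subset \mathrm{GL}(3, \mathbb{C})$ a finite subgroup acting without pseudo-reflections. Then $\Sigma := p^{-1}(V_{\rm sing})$ is the union of fixed loci of non-identity elements of $G$, each of complex codimension at least $2$ in $B$.

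I would then construct the cut-off as follows. Choose holomorphic functions $s_1, \ldots, s_m$ on $B$ whose common zero set is $\Sigma$, and set $\Phi := \sum_{g \in G} \sum_i |s_i \circ g|^2$; this is smooth, $G$-invariant, non-negative, vanishes exactly on $\Sigma$, and satisfies $|d\Phi|^2_{\omega_{\rm euc}} \le C\Phi$ since $\Phi$ is a sum of $|\text{holomorphic}|^2$. Fix a smooth one-variable cut-off $\chi: \mathbb{R}_{\ge 0} \to [0,1]$ with $\chi \equiv 0$ on $[0,1]$ and $\chi \equiv 1$ on $[2, \infty)$, and put $\widetilde{\rho}_\delta := \chi(\Phi/\delta^2)$; being $G$-invariant it descends to $\rho_\delta$ on $V$. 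After pulling back via $p$ (which only multiplies the integrals by $|G|$) and using Proposition~\ref{lemme1}$(i)$, $p^*\omega_\theta$ is quasi-isometric to $p^*\omega_X + (1-\theta\circ p)^2\, \omega_{\rm orb}$ for some smooth K\"ahler metric $\omega_{\rm orb}$ on $B$. Starting from the identity
\[ |\nabla \widetilde{\rho}_\delta|^2_{\omega_\theta}\, \omega_\theta^3 = 3\, d\widetilde{\rho}_\delta \wedge d^c \widetilde{\rho}_\delta \wedge \omega_\theta^2 = \frac{3(\chi'(\Phi/\delta^2))^2}{\delta^4}\, d\Phi \wedge d^c\Phi \wedge \omega_\theta^2, \]
the bound $d\Phi \wedge d^c\Phi \le C\Phi\, \omega_{\rm euc}$ together with $\omega_\theta \le C\omega_{\rm orb}$ gives the $L^2$-gradient integrand of order $O(\delta^{-2})$ on the tube $\{\Phi \sim \delta^2\}$ of Euclidean volume $O(\delta^4)$, hence the $L^2$-bound $O(\delta^2)$. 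To upgrade to the $L^{2+\epsilon}$ estimate one combines this with a pointwise bound on $|\nabla \widetilde{\rho}_\delta|_{\omega_\theta}$: when $V \not\subset \tfrac{1}{2}U$, $\omega_\theta \gtrsim \omega_{\rm orb}$ on $V$ and $|\nabla \widetilde{\rho}_\delta|_{\omega_\theta} \lesssim \delta^{-1}$, giving $O(\delta^{2-\epsilon}) \to 0$ for $\epsilon < 2$. The Laplacian term is treated analogously from $\Delta_{\omega_\theta} \widetilde{\rho}_\delta \cdot \omega_\theta^3 = 3\, dd^c\widetilde{\rho}_\delta \wedge \omega_\theta^2$.

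The main obstacle is the sub-case $V \subset \tfrac{1}{2}U$, where $\omega_\theta = (C+1)\omega_X$ and the pullback $p^*\omega_X$ degenerates along $\Sigma$ (e.g.\ in the $A_1$-model $G = \mathbb{Z}/2 \cdot (-\mathrm{Id})$ with $\Sigma = \{0\}$, one has $p^*\omega_X \sim |z|^2 \omega_{\rm euc}$ near the origin). The pointwise bound $|\nabla \widetilde{\rho}_\delta|^2_{\omega_\theta} \lesssim \delta^{-4}$ then blows up, but the volume form $\omega_\theta^3 \lesssim \delta^6 \omega_{\rm euc}^3$ degenerates to the matching order, so the product integrand on the $\delta$-annulus around $\Sigma$ is $O(\delta^{8-2\epsilon})\,\omega_{\rm euc}^3$ and the full integral stays $O(\delta^{8-2\epsilon}) \to 0$ for $\epsilon < 4$. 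In general the same compensation should hold since both the degeneration of the inverse of $p^*\omega_X$ and that of the volume form $(p^*\omega_X)^3$ are controlled by the vanishing of the $G$-invariant embedding $V \hookrightarrow \mathbb{C}^N$ at points of $\Sigma$, compatibly with the codimension of $\Sigma$.
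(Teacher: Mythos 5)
Your construction differs from the paper's in a crucial way, and the difference leaves a genuine gap in the "hard" case. The paper does not use a polynomial cut-off $\rho_\delta=\chi(\Phi/\delta^2)$ with $\Phi\sim\mathrm{dist}(\cdot,\Sigma)^2$; instead it uses a \emph{logarithmic} cut-off
$\rho_\delta=\xi\bigl(\psi+\delta^{-1}\bigr)$ with $\psi=\sum_\alpha\log(|f_\alpha|^2+|g_\alpha|^2)$,
the $(f_\alpha,g_\alpha)$ being linear forms cutting out $W=p^{-1}(V_{\rm sing})$. This choice is what makes the argument uniform: one obtains pointwise
$|\nabla\rho_\delta|^2_{p^\star\omega_\theta}+|\Delta_{p^\star\omega_\theta}\rho_\delta|\le C\,e^{-(N+1)\psi}$ directly from $p^\star\omega_\theta\gtrsim p^\star\omega_X\gtrsim e^{N\psi}\omega_{\mathbb C^3}$ and $d\psi_\alpha\wedge d^c\psi_\alpha\le Ce^{-\psi_\alpha}\omega_{\mathbb C^3}$, and then
\[\int_B |\nabla\rho_\delta|^{2(1+\ep)}_{p^\star\omega_\theta}\,p^\star\omega_\theta^3\le C\sum_\alpha\int_{\mathrm{Supp}(d\rho_\delta)}e^{-(N+1)\ep\psi-\psi_\alpha}\,\omega_{\mathbb C^3}^3.\]
The integrand on the right is a \emph{fixed} $L^1$ function (independent of $\delta$), and $\mathrm{Supp}(d\rho_\delta)$ shrinks to the null set $W$, so the conclusion is immediate by dominated convergence. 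No case distinction, no order-matching, no dependence on the singularity type.

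Your polynomial cut-off does work cleanly when $V$ avoids $\tfrac12 U$, since then $p^\star\omega_\theta$ is quasi-euclidean on $B$ and the $O(\delta^{2-\ep})$ estimate is correct (modulo the fact that you should shrink $V$ so that $V\cap\tfrac12 U=\emptyset$; it is not enough that $V\not\subset\tfrac12 U$, because $\omega_\theta$ is not uniformly comparable to the orbifold metric as long as $V$ meets $\tfrac12 U$). The gap is in the case where $V$ meets the region $\theta>0$. There $p^\star\omega_\theta\sim p^\star\omega_X$ degenerates along $W$, and your claim that $|\nabla\rho_\delta|^2_{\omega_\theta}\lesssim\delta^{-4}$ with a compensating $\omega_\theta^3\lesssim\delta^6\omega_{\rm euc}^3$ is only an $A_1$ computation. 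In general the pointwise blow-up of $|\nabla\rho_\delta|^2_{p^\star\omega_\theta}$ on the tube $\{\Phi\sim\delta^2\}$ is not $\delta^{-4}$; it is $\delta^{-2}$ times the inverse of the \emph{smallest} eigenvalue of $p^\star\omega_\theta$ transverse to $W$, which near an $A_{k-1}$ curve can be as bad as $\delta^{-2k}$. The degeneration of the \emph{determinant} $(p^\star\omega_\theta)^3$ is governed by the \emph{product} of all eigenvalues, which is a priori a different quantity, and the assertion that "the same compensation should hold" because "both are controlled by the vanishing of the embedding" is not an argument. (It so happens that the final exponent does come out positive in the examples you can write down, via the identity $|d\rho|^2_\omega\,\omega^n=n\,d\rho\wedge d^c\rho\wedge\omega^{n-1}$ and the upper bound $p^\star\omega_\theta\lesssim\omega_{\rm euc}$, but turning this into a proof requires precisely the kind of pointwise bookkeeping that the paper's logarithmic cut-off was designed to avoid.) As written, the proposal does not establish the lemma outside the quasi-orbifold regime.
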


\begin{rem}
The above statement is also true near the points $p_i$ (for other reasons) but we will not need it. 
\end{rem}

\begin{proof}
The statement is local, so we an open euclidean neighborhood $V$ of $x$ such that $V\Subset X\setminus \{p_1, \ldots, p_k\}$ and there exists a surjective, finite, quasi-étale Galois cover $p:B\to V$ where $B$ is a neighborhood of the origin in $\mathbb C^3$ so that $G=\mathrm{Gal}(p)$ becomes a subgroup of $\mathrm{GL}(3,\mathbb C)$ which does not contain any reflexion. The analytic set 
\[W:=p^{-1}(V_{\rm sing})=\bigcup_{g\in G\setminus \{e\}} \mathrm{Fix}(g)\] 
is therefore a finite union of linear subspaces of codimension at least $2$. One can find a (finite) family $(f_\alpha, g_\alpha)_{\alpha\in A}$ of couples of linearly independent linear functions on $\mathbb C^3$ such that $\cup_{g\in G} gW \subset \cup_{\alpha\in A}(f_\alpha=g_\alpha=0)$ and such that the RHS is $G$-invariant. 

We set $\psi_\alpha:= \log (|f_\alpha|^2+|g_\alpha|^2)$ and $\psi:=\sum_\alpha \psi_\alpha$; by construction the latter function is $G$-invariant. Since $p$ is étale outside $W$, there exists $N\ge 1$ such that 
\begin{equation}
\label{ram}
p^*\omega_X \ge e^{-N\psi} \om_{\mathbb C^3},
\end{equation}
 where $\omega_{\mathbb C^3}$ is the euclidean metric on $B$, possibly scaled down. A simple computation shows that 
\begin{equation}
\label{gradient}
0\le  dd^c \psi_\alpha+d\psi_\alpha \wedge d^c \psi_\alpha \le C_2e^{-\psi_\alpha} \omega_{\mathbb C^3}.
\end{equation}
Now, let $\xi: \mathbb R \to \mathbb R$ be a non-decreasing smooth function such that $\xi\equiv 0$ on $(-\infty, -0]$ and $\xi\equiv 1$ on $[1, +\infty)$. We define $\rho_\delta(x)=\xi(\psi(x)+\delta^{-1})$ so that $\rho_\delta(x)= 1$ iff $\psi(x)\ge1-\delta^{-1}$ and $\rho_\delta(x)= 0$ iff $\psi(x)\le-\delta^{-1}$. The function $\rho_\delta$ is $G$-invariant hence it descends to a cut-off function on $V$. Since $p^*\omega_\theta$ dominates a multiple of $p^*\omega_X$, the inequalities \eqref{ram} and \eqref{gradient} yield 
\begin{equation}
\label{gradient2}
|\Delta_{p^*\omega_{\theta}} \rho_\delta|+|\nabla \rho_\delta|^2_{p^*\omega_{\theta}} \le C_3e^{-(N+1)\psi} \quad \mbox{on} \,\, B\setminus W.
\end{equation}
Since $p^\star\omega_\theta$ is dominated by a multiple of $\omega_{\mathbb C^3}$, we infer
\begin{eqnarray*}
\int_B |\nabla \rho_\delta|^{2(1+\ep)}_{p^*\omega_\theta} p^*\omega_{\theta}^3 &\le& C_3 \int_Be^{-(N+1)\ep \psi} d\rho_\delta \wedge d^c \rho_{\delta}\wedge p^*\omega_{\theta}^{2} \\
&\le &  C_4 \sum_{\alpha\in A} \int_{\mathrm{Supp}(d\rho_\delta)} e^{-(N+1)\ep \psi-\psi_\alpha} \omega_{\mathbb C^3}^3. 
\end{eqnarray*}
Now, the latter integral goes to zero because $e^{-(N+1)\ep \psi-\psi_\alpha} \in L^1(\omega_{\mathbb C^3}^3)$ for $\ep$ small enough, as one can check using Hölder inequality since $e^{-\psi_\alpha} \in L^p$ for any $0<p<2$. The same arguments works for the $L^{1+\ep}$ control of the laplacian of $\rho_\delta$. 
\end{proof}


Thanks to (the proof of) Proposition~\ref{sup estimate}, Proposition~\ref{lemme1} $(ii)$ and Lemma~\ref{cutoff}, we have

\begin{cor}
\label{LI estimates theta}
Let $x\in X\setminus \{p_1, \ldots, p_k\}$. Then, there exist a neighborhood $V$ of $x$ and a number $p>1$ satisfying the following. For any non-negative function $f\in \mathcal C^2(V_{\rm reg})$ with support in $V$, one has
\[f+(\Delta_{\omega_\theta} f)_- \in L^p(V_{\rm reg},\om_\theta^3) \quad \Longrightarrow \quad f\in L^{\infty}(V_{\rm reg}).\]
\end{cor}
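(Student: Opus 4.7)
The plan is to simply observe that Corollary~\ref{LI estimates theta} is precisely the local version of Proposition~\ref{sup estimate} (cf. Remark~\ref{local}) applied to the metric $\omega_\theta$ in place of the Monge-Amp\`ere solution $\omega$, and to check that all the ingredients needed for that proof are now available for $\omega_\theta$ on a neighborhood of $x$.

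First, I would recall what goes into the proof of Proposition~\ref{sup estimate}: the input is a Green function $G_y$ for the relevant metric satisfying properties (G.1)--(G.4) of Section~\ref{sec Green}, together with a family of cut-off functions $(\rho_\delta)$ of the singular set for which $|\nabla \rho_\delta|_\omega^{2+\ep}$ and $|\Delta_\omega \rho_\delta|^{1+\ep}$ integrate to zero. Given these, the representation formula and the Cauchy--Schwarz manipulations in \eqref{Gx}--\eqref{limsup2} yield the $L^\infty$ bound. Nothing in those manipulations is specific to $\omega$ being a Monge--Amp\`ere solution; only the Green function estimates and the cut-off estimates are used.

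Next I would collect the two inputs for $\omega_\theta$. By Proposition~\ref{lemme1}$(ii)$, the metric $\omega_\theta$ admits, for each $y\in X_{\rm reg}$, a Green function $G_y$ satisfying (G.1)--(G.4). By Lemma~\ref{cutoff}, around the given point $x\in X\setminus\{p_1,\dots,p_k\}$ there exist a neighborhood $V$ and a family of cut-off functions $(\rho_\delta)$ of $V_{\rm sing}=X_{\rm sing}\cap V$ with
\[
\limsup_{\delta\to 0} \int_V\bigl(|\nabla \rho_\delta|_{\omega_\theta}^{2+\ep}+|\Delta_{\omega_\theta}\rho_\delta|^{1+\ep}\bigr)\,\omega_\theta^3 = 0
\]
for some $\ep>0$. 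These are exactly the local analogues of the two ingredients feeding into Proposition~\ref{sup estimate}.

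Finally, fixing $p>1$ as the conjugate exponent of $1+\ep$ (with $p$ taken large enough as in the proof of Proposition~\ref{sup estimate}), I would run the argument of that proposition verbatim with $\omega$ replaced by $\omega_\theta$: for a non-negative $f\in\mathcal C^2(V_{\rm reg})$ with support in $V$, apply the Green representation to $\rho_\delta f$ as in \eqref{Gx}, integrate by parts as in \eqref{IPP}, and control the three resulting terms via (G.3)--(G.4) and the cut-off estimates to obtain
\[
f(y)\le \tfrac{1}{V_\theta}\int_V f\,\omega_\theta^3 + \|G_y\|_{L^{1+\ep}}\,\|f+(\Delta_{\omega_\theta} f)_-\|_{L^p}
\]
for almost every $y$, which gives the required $L^\infty$ bound since $\sup_{y\in X_{\rm reg}}\|G_y\|_{L^{1+\ep}}<+\infty$. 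The only mildly delicate point is to ensure that the cross terms in the integration by parts are genuinely localized inside $V$; this is automatic because $f$ is compactly supported in $V$ and $\rho_\delta$ is the cut-off of Lemma~\ref{cutoff}, so no boundary contributions appear. I do not expect a real obstacle here: the work has been front-loaded into Proposition~\ref{lemme1} and Lemma~\ref{cutoff}, and the corollary is essentially their combination via the (local form of the) proof of Proposition~\ref{sup estimate}.
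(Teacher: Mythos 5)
Your proposal matches the paper exactly: the authors state the corollary as an immediate consequence of (the proof of) Proposition~\ref{sup estimate} in its local form (Remark~\ref{local}), Proposition~\ref{lemme1}$(ii)$ for the Green function properties (G.1)--(G.4) of $\omega_\theta$, and Lemma~\ref{cutoff} for the cut-off estimates near $x$. You have correctly identified all three ingredients and the way they combine; nothing is missing.
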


\subsection{The induced Hermite-Einstein metric on $\cF$} 
\label{ssec 2}
We recall that in the article \cite{C++}, given a coherent, torsion-free sheaf $\cF$ a "natural" metric $h_{\cF, 0}$ is introduced by gluing via a partition of unity the quotient metric provided by local resolutions of $\cF$. The resulting curvature form has the remarkable property of being bounded from below in the sense of Griffiths.  Moreover it combines very nicely with the desingularization theorem of Rossi, cf. \cite{Rossi68}. 
\smallskip

\noindent Furthermore, the following result was established (notations as in \emph{loc. cit.}).
\begin{thm}\cite{C++}\label{C++} Let $X$ be a $3$-dimensional compact complex Kähler space with at most klt singularities. The restriction $\cF|_{X_0}$ admits a metric $h_{\cF}= h_{\cF, 0}\exp(s)$ with the following properties:
\begin{enumerate}
\smallskip

\item[\rm (i)] It is smooth and it satisfies the Hermite-Einstein equation on $\displaystyle (X_{0}, \omega_{\theta}|_{X_0})$, i.e. 
\[\Theta(\cF,h_{\cF})\wedge \omega_\theta^2 = \lambda \mathrm{Id}_{\cF} \otimes \omega_\theta^3,\]
where $\lambda= \frac{c_1(\cF)\cdot[\omega_\theta]^2}{r [\omega_\theta]^3}$. Moreover, $h_{\cF}$ is a smooth metric in the orbifold sense when restricted to $X\setminus U$.  
\smallskip

\item[\rm (ii)] There exists a constant $C>0$ such that the inequalities
$$\Tr \exp(s) \leq C,\qquad \exp(s)\geq  Ce^{N\phi_Z}\Id_{\cF}$$
hold pointwise on $X_0$, where $N> 0$ is a positive, large enough constant.
\smallskip

\item[\rm (iii)] We have $$\int_{X_0}
\frac{1}{-\phi_Z}|D's|^2\omega_\theta^3\leq C, \quad \int_{X_0} |\Theta(\cF, h_{\cF})|^2\omega_\theta^3< \infty$$
where the norm $|\cdot|$ is taken with respect to $h_{\cF, 0}$ and $\omega_\theta$.
\end{enumerate} 
\end{thm}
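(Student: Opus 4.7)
The plan is to construct $h_\cF$ via a Donaldson-Uhlenbeck-Yau argument adapted to the singular setting. The starting point is the reference metric $h_{\cF,0}$ from \cite{C++}, obtained by gluing quotient metrics from local resolutions of $\cF$, whose Chern curvature is Griffiths bounded from below. First I would approximate $\pi^\star \omega_\theta$ on a log resolution $\pi: \wX \to X$ by smooth K\"ahler metrics $\om_\ep$ with uniform $L^p$-bounded volume forms, exactly as produced in the proof of Proposition \ref{lemme1}$(ii)$, and apply Rossi's desingularization to replace $\pi^\star \cF$ by a genuine vector bundle while preserving stability. For each $\ep>0$, the Donaldson heat flow starting from $\pi^\star h_{\cF,0}$ would converge, by stability of $\cF$ together with the Uhlenbeck-Yau dichotomy, to a smooth Hermite-Einstein metric $h_\ep=h_{\cF,0}\exp(s_\ep)$ with respect to $\om_\ep$.

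The bulk of the work is to derive $\ep$-uniform a priori estimates for $s_\ep$. The upper bound $\Tr \exp(s_\ep)\le C$ in $(ii)$ should follow from the differential inequality $\Delta_{\om_\ep} \log \Tr \exp(s_\ep) \ge -C$, which holds because the initial curvature is Griffiths bounded from below and $\lambda_\ep$ is uniformly bounded, combined with a Moser iteration powered by the Sobolev inequality of \cite{GPSS23} whose hypotheses are guaranteed by the uniform $L^p$ control of the volume. The $L^2$-type estimates in $(iii)$ would then be obtained by pairing the HE equation with $s_\ep/(-\phi)$ and integrating by parts, using crucially that $\om_\theta + dd^c\phi$ vanishes identically near the finite non-orbifold locus $\{p_1,\dots,p_k\}$, so that the singular weight absorbs the boundary contributions.

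The most delicate point, and the expected main obstacle, is the weighted lower bound $\exp(s_\ep) \ge C e^{N\phi}\Id$. I would attack it by considering the function $f_\ep := \log \Tr \exp(-s_\ep) - N\phi$ on $X_0$, and showing that both the positive part of $f_\ep$ and the negative part of $\Delta_{\om_\ep} f_\ep$ lie in a uniform $L^p$ class: the first using the upper bound on $\Tr \exp(s_\ep)$, the second by combining the HE equation with a careful choice of $N$ large enough to dominate the contribution of the Griffiths lower bound of $\Theta(\cF, h_{\cF,0})$. Once these are in place, the Harnack-type Corollary \ref{LI estimates theta} applies locally at each point of $X\setminus \{p_1,\dots,p_k\}$ and yields the desired uniform $L^\infty$ bound. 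Passing to a weak limit as $\ep \to 0$ and invoking standard elliptic bootstrapping in local orbifold uniformizing charts where $\om_\ep$ converges smoothly to $\om_\theta$ would then produce $h_\cF$ with the orbifold regularity and Hermite-Einstein property of $(i)$.
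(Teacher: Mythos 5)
Theorem~\ref{C++} is not proved in this paper: it is quoted verbatim from \cite{C++}, so there is no in-house argument to compare your proposal against. Evaluated on its own terms, your sketch follows the expected template (approximating family $\omega_\ep$ as in the proof of Proposition~\ref{lemme1}, Rossi desingularization plus openness of stability to obtain smooth HE metrics $h_\ep$, uniform a priori estimates, diagonal limit), but it contains two substantive errors that would make the estimates of items (ii) and (iii) collapse.

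You systematically confuse $\phi$ with $\phi_Z$. The weight appearing in (ii) and (iii) is $\phi_Z$, a function with genuine $\log$ poles along the whole analytic set $Z$ (the union of $X_{\rm sing}$ and the non-locally-free locus of $\cF$); by contrast, $\phi:=-\theta'^2\varphi_p$ is a \emph{bounded} smooth function supported near the finitely many $p_i$. Replacing $\phi_Z$ by $\phi$ yields a statement far weaker than (ii)–(iii) and useless later, e.g.\ in Step~1 of the proof of Theorem~\ref{quasi orbi}, where the two-sided bound $C^{-1}e^{N\phi_Z\circ p}h_0\le h_\cE\le Ce^{-N\phi_Z\circ p}h_0$ is derived from Theorem~\ref{C++}(ii) and crucially exploits the log-pole structure of $\phi_Z$ along $Z$. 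Furthermore, your justification of (iii) rests on the assertion that $\omega_\theta+dd^c\phi$ vanishes identically near $\{p_1,\dots,p_k\}$; this is false. On that locus $\theta=\theta'=1$, so $\omega_\theta=(C+1)\omega_X$ and $dd^c\phi=-\omega_X$, hence $\omega_\theta+dd^c\phi=C\omega_X\neq 0$. The genuine identity is $\omega_X+dd^c\phi=0$ on $U'$, recorded as \eqref{chern1}, and its role is confined to killing boundary terms in the proof of Theorem~\ref{thmBG}, not to the weighted estimates of Theorem~\ref{C++}.

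Finally, invoking Corollary~\ref{LI estimates theta} to obtain the lower bound in (ii) is anachronistic here: that corollary is a product of this paper (via Proposition~\ref{lemme1}, Lemma~\ref{cutoff}, and ultimately \cite{GPSS22}), established precisely in order to \emph{upgrade} the conclusions of Theorem~\ref{C++} to the quasi-isometry of Theorem~\ref{quasi orbi}. Using it inside a proof of the cited theorem would require a careful independence check to rule out circularity, and in any case it cannot be what \cite{C++} does.
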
  
\medskip

\noindent The notations in the statement above are as follows:
\begin{itemize}

\item $Z\subset X$ is the proper analytic subset of $X$ given by the union of $X_{\rm sing}$ with the non locally-free locus of $\cF$;

\item $\phi_Z$ is a function on $X$ with log poles along $Z$;

\item $X_0:= X\setminus Z$ is the open subset of $X_{\rm reg}$ such that $\cF|_{X_0}$ admits a vector bundle structure and $D'$ is the Chern connection with respect to $h_{\cF, 0}$ on $X_0$;

\item $U\subset X$ is the support of the cut-off function $\theta$ in \eqref{chern4}, i.e. a small open subset containing the non-quotient singularities
of $X$.   
\end{itemize}
Note that the result in \cite{C++} holds true in arbitrary dimension (provided that one modifies the definition of $\omega_\theta$ accordingly), but this is the version we will need here. 

\begin{rem}
\label{improper}
An important consequence of the property $(iii)$ is that if $\eta$ is smooth $(1,1)$-form on $X_0$ such that $\sup_{X_0} |\eta|_{\omega_\theta}<+\infty$, then the improper integral
\[\int_{X_0} \Delta(\cF, h_{\cF}) \wedge \eta<\infty\]
is convergent. Indeed $\Delta(\cF, h_{\cF})$ involves quadratic terms in $\Theta(\cF, h_{\cF})$ hence it is enough to show that $|\Theta(\cF, h_{\cF})|^2_{h_{\cF}, \omega_\theta}$ is in $L^1$. But this follows from the first (resp. second) statement in item $(ii)$ (resp. item $(iii)$) in Theorem~\ref{C++} above. 

\noindent
In particular, since $\omega_\theta$ dominates a multiple of $\omega_X$, $\Delta(\cF, h_{\cF})$ induces a $(2,2)$-current on the whole space $X$. 
\end{rem}

\subsection{$C^0$ estimates on the orbifold locus}
\label{ssec 3}
The aim of this section is to show that $h_\cF$ is quasi-isometric to a smooth orbifold metric on $X\setminus \{p_1, \ldots, p_k\}$, therefore (partially) generalizing the second part of item $(i)$ in Theorem~\ref{C++}. In order to state the result, it is convenient to introduce the following

\begin{set}
\label{setup}
Let $x\in X\setminus \{p_1, \ldots, p_k\}$, and let $V$ be a small open neighborhood of $x$ such that $V\simeq B/G$ where $B\subset \C^3$ the unit ball of the Euclidean space, and $G$ is a finite group acting on $B$ holomorphically. We can assume that the action is free in the complement of a set $W$ of codimension at least two. Let $p: B\to V$ be the quotient map, which is étale over $V_{\rm reg}$. 

Let $\cE:=(p^\star\cF)^{**}$ which is a vector bundle on $B$ (by assumption (a) made at the beginning of Section~\ref{geo setup}) and which we endow with a smooth hermitian metric $h_0$. Since $\cF$ is reflexive, $\cF|_{X_{\rm reg}}$ is locally free in codimension two hence up to adding more points $(p_i)$, one can assume that $\cF$ is locally free on $X_{\rm reg} \setminus  \{p_1, \ldots, p_k\}$, hence on $V_{\rm reg}$, too. In particular, one has
\begin{equation}
\label{ross200}
\cE|_{B\setminus W}= p^\star (\cF|_{V_{\rm reg}})
\end{equation}
and this allows us to consider the smooth hermitian metric $h_\cE:=p^\star h_\cF$ defined over $B\setminus W$. 

Finally, we denote by 
\begin{equation}\label{ross110}\omega_B:= p^\star\omega_\theta
\end{equation}
the inverse image of the metric $\omega_\theta|_V$.
\end{set}

\begin{thm}
\label{quasi orbi}
In Setup~\ref{setup}, there exists a constant $C>0$ such that 
\begin{equation}\label{chern16}
C^{-1}h_0\leq h_{\cE}\leq Ch_0
\end{equation}
holds pointwise on $B\setminus W$.
\end{thm}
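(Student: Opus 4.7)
Our plan is to apply Corollary~\ref{LI estimates theta} (in its localized form, Remark~\ref{local}) to a suitable non-negative test function on $V_{\rm reg}$ controlling the discrepancy between $h_{\cE}$ and $h_0$. After averaging $h_0$ over the finite group $G:=\mathrm{Gal}(p)$, we may assume $h_0$ is $G$-invariant, so that the endomorphism $H:=h_{\cE}h_0^{-1}$ of $\cE|_{B\setminus W}$ is $G$-invariant. The functions $f:=\log\tr H$ and $g:=\log\tr H^{-1}$ then descend to smooth functions on $V_{\rm reg}$, and \eqref{chern16} is equivalent to the joint upper boundedness of $f$ and $g$ on $V_{\rm reg}$. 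The upper bound on $f$ is immediate from Theorem~\ref{C++}(ii): $\tr\exp(s)\le C$ yields $h_{\cF}\le Ch_{\cF,0}$, and pulling back to $B$ gives $h_{\cE}\le C'h_0$ on compact subsets of $B$---here we use that the quotient metric $h_{\cF,0}$ is, by construction, pointwise dominated by any smooth metric $h_0$ on $\cE$ near $W$ (it may degenerate along $W$ but cannot blow up).

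\smallskip

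The essential content of the theorem is therefore the upper bound on $g$. A standard sublaplacian estimate of Donaldson--Siu type, using the pulled-back Hermite--Einstein equation $\Lambda_{\omega_B}F(h_{\cE})=\lambda\Id$ on $B\setminus W$ (Theorem~\ref{C++}(i)) and the boundedness of $F(h_0)$ on compact subsets of $B$, yields
\[
\Delta_{\omega_B}g\ge -C_1 \qquad \text{on }B\setminus W,
\]
and hence $\Delta_{\omega_\theta}g\ge -C_1$ on $V_{\rm reg}$. To apply Corollary~\ref{LI estimates theta} we still need $L^p$-control of $g$ and a cutoff-adapted $L^p$-control of $\nabla g$. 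The lower bound on $g$ follows from the upper bound on $f$ together with AM--GM. The reverse inequality $\exp(s)\ge C e^{N\phi_Z}\Id$ of Theorem~\ref{C++}(ii), combined with the at most polynomial blow-up of the ratio $h_0(p^{\star}h_{\cF,0})^{-1}$ along the codimension-two set $W$, gives an estimate of the form $g\le C(1+|\phi_Z|+|\log d_W|)$, where $d_W$ is the distance to $W$; since $\phi_Z$ is qpsh, this places $g$ in $L^q(V,\omega_\theta^3)$ for every $q<\infty$. Moreover, the weighted energy estimate $\int_V|D's|^2/(-\phi_Z)\,\omega_\theta^3<\infty$ of Theorem~\ref{C++}(iii), combined with H\"older's inequality and the $L^r$-integrability of $\phi_Z$, yields $|\nabla g|_{\omega_\theta}\in L^q(V)$ for every $q<2$.

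\smallskip

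To conclude, we fix a small neighborhood $V_0\Subset V$ of the chosen point and a smooth cutoff $\chi$ on $X$ with $\chi\equiv 1$ on $V_0$ and $\Supp\chi\subset V$, and choose $C_0>0$ with $g+C_0\ge 0$. Then $u:=(g+C_0)\chi^2$ is a non-negative $C^2$ function on $V_{\rm reg}$, supported in $V$, lying in $L^q$ for every $q<\infty$. Expanding
\[
\Delta_{\omega_\theta}u=\chi^2\Delta_{\omega_\theta}g+(g+C_0)\Delta_{\omega_\theta}(\chi^2)+2\langle\nabla g,\nabla\chi^2\rangle_{\omega_\theta},
\]
the first term is $\ge -C_1$; the second is in $L^q$ for every $q<\infty$ because $\omega_\theta$ is quasi-isometric to an orbifold K\"ahler metric on $V$ (as $V$ avoids the non-quotient singularities), making $\Delta_{\omega_\theta}(\chi^2)$ bounded; the third is in $L^q$ for $q<2$ by the gradient integrability above. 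Altogether $u+(\Delta_{\omega_\theta}u)_-\in L^p$ for some $p>1$, and Corollary~\ref{LI estimates theta} yields $u\in L^{\infty}(V_{\rm reg})$, hence $g$ is bounded on $V_0$. This provides the lower bound $h_\cE\ge C^{-1}h_0$ on the preimage of $V_0$, and completes the proof once combined with the upper bound from the first paragraph. The principal technical difficulty is establishing the gradient integrability $|\nabla g|_{\omega_\theta}\in L^{1+\epsilon}$ via the fine weighted $L^2$ estimate of Theorem~\ref{C++}(iii); once this is in hand, the cutoff manipulation and Corollary~\ref{LI estimates theta} do the rest.
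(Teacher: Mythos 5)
Your strategy---reduce to Corollary~\ref{LI estimates theta} via a non-negative test function measuring the discrepancy between $h_\cE$ and $h_0$---is the paper's strategy, and the upper bound $h_\cE\le Ch_0$ indeed follows from Theorem~\ref{C++}(ii) as you say. The gap lies in your treatment of the cutoff cross term for the lower bound. Working with the logarithm $g=\log\tr H^{-1}$, the sublaplacian estimate $\Delta_{\omega_\theta}g\ge -C_1$ produces no favourable gradient term, so the cross term $2\langle\nabla g,\nabla\chi^2\rangle_{\omega_\theta}$ in $\Delta_{\omega_\theta}\bigl((g+C_0)\chi^2\bigr)$ must be controlled in $L^p$ directly. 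Your route, via the weighted estimate $\int|D's|^2/(-\phi_Z)\,\omega_\theta^3<\infty$ and H\"older, yields $|\nabla g|\in L^q$ only for $q<2$; the available estimates simply do not allow you past $q=2$. But the exponent $p>1$ of Corollary~\ref{LI estimates theta} is \emph{not at your disposal}: unwinding Proposition~\ref{sup estimate}, one needs $p\ge\max\{(1+\ep)/\ep,\,2(2+\ep)/\ep\}$ where $\ep$ is the small exponent in the Green-function estimates (G.3)--(G.4) and in Lemma~\ref{cutoff}; since $|\nabla G_x|$ can lie only barely above $L^1$ in complex dimension three, and since $e^{-\psi_\alpha}\in L^q$ only for $q<2$ in Lemma~\ref{cutoff}, that $p$ is much larger than $2$. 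Your phrase ``$u+(\Delta u)_-\in L^p$ for some $p>1$'' thus conceals the mismatch.

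The paper sidesteps this by working with a small power $\psi^\ep$ of $\psi=|\tau|^2_{h_\cE}$ ($\tau$ a local holomorphic section) rather than a logarithm. The inequality $\Delta_{\omega_B}(\psi^\ep)\ge\ep^2\psi^{\ep-2}|\partial\psi|^2_{\omega_B}-C_0\ep\psi^\ep$ carries a \emph{nonnegative} term that absorbs the cutoff cross term $2\Re\bigl(\Tr_{\omega_B}\sqrt{-1}\partial\chi\wedge\dbar\psi^\ep\bigr)$ via Young's inequality and the cutoff property $|\nabla\chi|^2_{\omega_B}\le C\chi$, yielding $(\Delta_{\omega_B}(\chi\psi^\ep))_-\le C_\ep\widetilde\chi\,\psi^\ep$. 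Since $\psi\le Ce^{-2N\phi_Z}$ by the weak Step~1 estimate, $\psi^\ep$ lies in $L^{p_0}$ for \emph{any} prescribed $p_0$ once $\ep$ is chosen small enough; no gradient integrability of the test function is needed at all. Raising to a small power instead of taking the log is precisely the missing idea, and repairing your argument along these lines collapses it into the paper's proof.
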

\begin{rem}
In other words, the HE metric $h_\cF$ is quasi-isometric to an orbifold metric on $\cF|_V$. Notice as well that one can obtain 
\eqref{chern16} along the lines sketched in \cite{C++}, since the Sobolev inequality for $(X, \omega_\theta)$ holds true. 
\end{rem}


\begin{proof}
We proceed in three steps. \\

\noindent
{\bf Step 1. {\it A weaker $C^0$ estimate.}}

\medskip
\noindent 
We first claim that the metric $h_{\cE}$ compares with a fixed, arbitrary Hermitian metric $h_0$ on $\cE$ as follows
\begin{equation}\label{ross201}
C^{-1}\exp(N\phi_Z\circ p)h_0\leq h_{\cE}\leq C\exp(-N\phi_Z\circ p)h_0.
\end{equation}
To see this, we first observe that such a two-sided bound is valid for the pull-back of $h_{\cF, 0}$, as it follows by using the definition of this metric. Indeed, we can assume that $V$ is sufficiently small, so that the restriction $\cF|_V$ is given by the exact sequence
\[\mathcal O_V^p\to\mathcal O_V^q\to \cF|_V\to 0.\]
Given the equality \eqref{ross200}, the pull-back of this sequence to $U$ combined with the natural map $p^*\cF\to \cE$ gives
\begin{equation}\label{ross500}\mathcal O_B^p\to\mathcal O_B^q\to \cE|_B\to 0\end{equation} 
which is moreover exact when restricted to the complement $B\setminus W$ of $W$ in $B$. The pull-back of $h_{\cF, 0}$ on
$\displaystyle \cE|_{B\setminus W}$
is the quotient metric induced by \eqref{ross500} above by construction. The important point is that 
$\codim_B (W)\geq 2$ is at least two, and therefore the holomorphic maps defining the morphisms in \eqref{ross500} locally near a 
point of $W$ extend. Then e.g the RHS of \eqref{ross201} with the pull-back of $h_{\cF, 0}$ instead of $h_{\cE}$ is consequence of a direct evaluation of the 
quotient metric: say that $(e_i)_{i= 1, \dots, q}$ is the canonical basis of the trivial bundle of rank $r$, and $(s_j)_{j= 1, \dots, r}$ is a 
trivialising frame of $\cE$. If we denote by $\pi$ the second arrow (from the left) in \eqref{ross500}, then we have
\[\pi(e_i)= \sum f_i^js_j\]
for some holomorphic functions $(f^j_i)$ defined in the complement of $W$. By Hartogs, these functions extend holomorphically on $B$. 
Moreover, the rank of the matrix $(f^j_i)$ is maximal, equal to $r$ except maybe on $W$. Therefore, the sum of absolute value square of its $r\times r$
minors is greater than $\exp(N\phi_Z\circ p)$ for some $N\gg 0$. It follows that the norm of each $s_i$ with respect to the quotient metric is  
smaller than $\exp(-N\phi_Z\circ p)$, which establish our claim. Similar arguments show that the first inequality in \eqref{ross500} equally holds true.  
Then the assertion \eqref{ross500} follows from the estimates in Theorem \ref{C++}. 
\medskip

\noindent
{\bf Step 2. {\it A differential inequality.}}

\medskip
\noindent 
Let $\tau$ be any (non identically zero) holomorphic section of $\cE^\star$ over $B$, and let $\psi:=|\tau|_{h_\cE^\star}^2$. One has
 \begin{equation}\label{ross111} \Delta''_{\omega_B}\log\psi \geq -C_0
\end{equation}
pointwise on $B\setminus W$ for some constant $C_0\geq 0$ independent of $\tau$. \footnote{One can show more generally that this holds in the sense of distributions on $B$.}
We will not provide an argument for \eqref{ross111}, since it is a standard calculation combined with the fact that 
the dual metric $h_\cE^\star$ is HE with respect to $\omega_B=p^\star\omega_\theta$. Note that the inequality \eqref{ross111} is equivalent to
\begin{equation}\label{chern9}
\psi \Delta''_{\omega_B}\psi\geq |\partial\psi|^2_{\om_B}- C_0 \psi^2.
\end{equation}

Next, let $\ep> 0$ be a positive constant. We introduce a cut-off function as follows.  Let $V'\Subset V$ be a relatively compact open subset and let  $\chi$ be a non-negative smooth function on $V$ such that $\chi\equiv 1$ on $V'$ and $\mathrm{Supp}(\chi)\subset V$, and satisfying $|\nabla \chi|_{\omega_X}^2\le C(n) \chi$ an $|dd^c \chi|_{\omega_X} \le C(n).$ To prove the existence of such a function $\chi$ in the singular setting, we first embed $V$ in some $\mathbb C^N$ and then take the restriction to $U$ of a standard cut-off function on the euclidean space. In what follows, we identify $\chi$ with its pullback to $B$. Since $\omega_\theta$ dominates a multiple of $\omega_X$, there exists $C_1>0$ such that 
\begin{equation}
\label{cutoff gradient}
|\nabla \chi|_{\omega_B}^2\le C_1\chi, \quad \mbox{and} \quad |dd^c \chi|_{\omega_B} \le C_1. 
\end{equation}

 We claim that there exist a 
positive constant $C_\ep$ and another cut-off function $\widetilde \chi$ so that we have 
\begin{equation}\label{chern10}
\Delta''_{\omega_B}(\chi \psi^\ep)\geq -C_\ep\widetilde \chi \psi^\ep
\end{equation}
pointwise on $B\setminus W$. \\

The rest of this step is devoted to the proof of \eqref{chern10}. 
The identity
\begin{equation}\label{chern11}
\Delta''_{\omega_B}(\chi \psi^\ep)= \psi^\ep\Delta''_{\omega_B}(\chi)+ \chi \Delta''_{\omega_B}(\psi^\ep)+ 2\Re \left(\Tr_{\omega_B}\sqrt{-1}\partial\chi \wedge \dbar \psi^\ep\right)
\end{equation}
holds for every $\ep> 0$, and so does the following one
\begin{equation}\label{chern12}
\Delta''_{\omega_B}(\psi^\ep)= \ep\psi^{\ep- 1}\Delta''_{\omega_B}(\psi)-\ep(1-\ep) \psi^{\ep-2} |\partial\psi|^2_{\om_B}.
\end{equation}
By combining \eqref{chern12} with \eqref{chern9} we infer that the inequality
\begin{equation}\label{chern13}
\Delta''_{\omega_B}(\psi^\ep)\geq \ep^2\psi^{\ep- 2}\ |\partial\psi|^2_{\om_B}-C_0\ep \psi^\ep
\end{equation}
holds. 

The inequalities \eqref{cutoff gradient} will take care of the first term in \eqref{chern11}. 
For the rest of them, remark that we have
\[
\chi \Delta''_{\omega_B}(\psi^\ep)+ 2\Re \left(\Tr_{\omega_B}\sqrt{-1}\partial\chi \wedge \dbar \psi^\ep\right)\geq  \ep^2\chi\psi^{\ep- 2} |\partial\psi|^2_{\om_B}- C\ep \chi^{1/2}\psi^{\ep- 1}|\partial \psi|_{\omega_B}-C_0\ep \chi \psi^\ep
\]  
but the mean inequality shows for any $\delta:=\ep C^{-2}$, one has 
\begin{eqnarray*}
C\ep \chi^{1/2}\psi^{\ep- 1}|\partial \psi|_{\omega_B} &\le& \delta C^2\ep \chi \psi^{\ep-2}|\partial \psi|_{\omega_B}^2+\delta^{-1} \ep\psi^{\ep}\\
&\le & \ep^2\chi\psi^{\ep- 2} |\partial\psi|^2_{\om_B}+C'\tilde \chi \psi^\ep.
\end{eqnarray*}
The inequality \eqref{chern10} now follows.\\

\noindent
{\bf Step 3. {\it End of the proof.}}

\medskip
\noindent 
In order to show that the LHS inequality in \eqref{chern16} holds true, we apply Corollary~\ref{LI estimates theta} and the function $f:= \chi \psi^\ep$
(or better say, the $G$-invariant version of it). Notice that $f$ belongs to the space $L^{p_0}(B, \omega_B)$ 
as soon as $\ep\ll 1$ is sufficiently small: this is a consequence of the weaker estimate \eqref{ross201} obtained in the first step. 

The proof of the RHS of \eqref{chern16} is completely similar but instead one considers a local holomorphic section $\widetilde \tau$ of $\mathcal E$ over $B$ and apply the above computations to $\widetilde \psi:= |\widetilde \tau|^2_{h_{\mathcal E}}$. 
\end{proof}



\subsection{$\dbar$-closedness of the current induced by $\Delta(\cF, h_\cF)$ on the orbifold locus}
\label{ssec 4}
 The main result we are after in this section is the following.

\begin{thm}\label{end,I}
We assume that $(X, \omega_X)$ and $\cF$ are satisfying the hypothesis of Theorem \ref{C++}. Let $h_{\cF}$ be the metric on $\cF$ which verifies the 
HE equation with respect to the metric $\omega_\theta$. Let $\alpha$ be a 
$(1,0)$-form with support in the open subset $X\setminus \{p_1, \ldots, p_k\}$ such that it is smooth on $X_{\rm reg}$ and such that
\[\sup_{X_{\rm reg}}\big(|\alpha|_{\omega_{\theta}}+ |\dbar \alpha|_{\omega_{\theta}}\big)< \infty.\] 
Then the equality
\begin{equation}\label{chern40}
\int_{X_0}\Delta(\cF, h_{\cF})\wedge \dbar \alpha= 0\end{equation}
holds.
\end{thm}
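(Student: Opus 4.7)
The plan is to prove~\eqref{chern40} via Stokes' theorem on $X_0$ combined with a family of cut-off functions adapted to the singular set. As a preliminary, I would observe that since $\cF$ is a reflexive $\Q$-sheaf on $X\setminus\{p_1,\ldots,p_k\}$, assumption~(a) applied at a point $x\in X_{\rm reg}\setminus\{p_1,\ldots,p_k\}$ yields a finite quasi-étale Galois cover $p\colon B\to V$ which, by purity of the branch locus (both $V$ and $B$ being smooth), is actually étale; hence the local freeness of $(p^\star\cF)^{**}=p^\star\cF$ descends to $\cF$ on $V$. Consequently $\cF$ is locally free on $X_{\rm reg}\setminus\{p_1,\ldots,p_k\}$, the analytic set $Z$ of the setup of Theorem~\ref{C++} satisfies $Z=X_{\rm sing}$, and $X_0=X_{\rm reg}$. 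On $X_0$ the metric $h_\cF$ is smooth, so $\Delta(\cF,h_\cF)$ is a smooth $d$-closed $(2,2)$-form. Since $\dim X=3$ and $\alpha$ is of type $(1,0)$, bidegree considerations yield $\Delta\wedge d\alpha=\Delta\wedge\dbar\alpha$ and $d\rho\wedge\Delta\wedge\alpha=\dbar\rho\wedge\Delta\wedge\alpha$ for any smooth function $\rho$ (the $(4,2)$ components vanish in dimension three).

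Next, I would construct a family $(\rho_\delta)_{\delta>0}$ of smooth cut-offs valued in $[0,1]$, equal to $1$ outside a $\delta$-neighborhood of $X_{\rm sing}\cap\Supp(\alpha)$ and to $0$ on a smaller neighborhood, satisfying $\int_X|\nabla\rho_\delta|^{2+\e}_{\omega_\theta}\,\omega_\theta^3\to 0$ as $\delta\to 0$ for some $\e>0$. Since $\Supp(\alpha)$ is compact and disjoint from $\{p_1,\ldots,p_k\}$, Lemma~\ref{cutoff} combined with a finite partition of unity supplies such a family. The form $\rho_\delta\Delta(\cF,h_\cF)\wedge\alpha$ is smooth with compact support in $X_0$, so Stokes' theorem together with $d\Delta(\cF,h_\cF)=0$ on $X_0$ yields
\[
\int_X\rho_\delta\,\Delta(\cF,h_\cF)\wedge\dbar\alpha=-\int_X d\rho_\delta\wedge\Delta(\cF,h_\cF)\wedge\alpha.
\]
By Remark~\ref{improper}, $\Delta(\cF,h_\cF)\wedge\dbar\alpha$ is integrable on $X_0$, so the left-hand side converges to $\int_{X_0}\Delta(\cF,h_\cF)\wedge\dbar\alpha$ by dominated convergence.

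The crux is to show that the right-hand side tends to zero. For this I would establish the $L^\infty$-bound
\[
\sup_{\Supp(\alpha)\cap X_0}|\Delta(\cF,h_\cF)|_{\omega_\theta}<+\infty.
\]
This is where Theorem~\ref{quasi orbi} plays a central role: on any local quasi-étale cover $p\colon B\to V$ of a point in the orbifold locus $X\setminus\{p_1,\ldots,p_k\}$, the lift $h_\cE=p^\star h_\cF$ is quasi-isometric to a smooth Hermitian reference metric $h_0$ on the vector bundle $\cE$. Writing $h_\cE=h_0\exp(s)$ with $s\in L^\infty(B\setminus W)$, the Hermite-Einstein equation with respect to the smooth Kähler metric $\omega_B=p^\star\omega_\theta$ becomes a quasilinear elliptic system for $s$; elliptic bootstrap on $B\setminus W$ together with a removable singularity argument across the codimension-$\geq 2$ branch locus $W$ (in the spirit of Bando-Siu \cite{BS94}) produces a smooth extension of $h_\cE$ to $B$. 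Hence $\Theta(\cE,h_\cE)$ is locally bounded on $B$, and this descends to the stated bound on $|\Delta(\cF,h_\cF)|_{\omega_\theta}$. I expect this step---upgrading the merely quasi-isometric bound of Theorem~\ref{quasi orbi} to a genuine $C^2$-estimate through the ramification locus---to be the main technical obstacle.

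With this $L^\infty$-bound at hand and the hypothesis $\sup_{X_{\rm reg}}|\alpha|_{\omega_\theta}<+\infty$, Hölder's inequality yields
\[
\Bigl|\int_X d\rho_\delta\wedge\Delta(\cF,h_\cF)\wedge\alpha\Bigr|\leq C\int_X|\nabla\rho_\delta|_{\omega_\theta}\,\omega_\theta^3\leq C\Bigl(\int_X|\nabla\rho_\delta|^{2+\e}_{\omega_\theta}\,\omega_\theta^3\Bigr)^{\frac{1}{2+\e}}\vol_{\omega_\theta}(X)^{\frac{1+\e}{2+\e}},
\]
which tends to zero as $\delta\to 0$ by the defining property of $(\rho_\delta)$. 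Combined with the convergence of the left-hand side, this establishes~\eqref{chern40}.
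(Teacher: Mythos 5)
Your overall architecture (Stokes on $X_0$ with cut-offs $\rho_\delta$, dominated convergence on the left-hand side, and a decay estimate on the boundary term $\int d\rho_\delta\wedge\Delta(\cF,h_\cF)\wedge\alpha$) is the natural first attempt and agrees with how the paper frames the problem, and your preliminary reduction via purity of branch locus to $\cF$ being locally free on $X_{\rm reg}\setminus\{p_1,\ldots,p_k\}$ is fine. The Stokes step is legitimate: $\rho_\delta\Delta(\cF,h_\cF)\wedge\alpha$ is smooth with compact support in $X_0$, $d\Delta(\cF,h_\cF)=0$ there, and the bidegree observations you make are correct. The problem is the step you yourself flag as ``the main technical obstacle'': proving
\[
\sup_{\Supp(\alpha)\cap X_0}|\Delta(\cF,h_\cF)|_{\omega_\theta}<+\infty.
\]
This is not a minor technicality to be dispatched by ``elliptic bootstrap and a removable singularity argument in the spirit of Bando--Siu''; it is exactly the difficulty the paper explicitly says it cannot resolve and designs its whole argument to avoid. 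Right before the proof, the authors write that Theorem~\ref{end,I} ``does not follow immediately from [Theorem~\ref{quasi orbi}], since it only gives $C^0$ estimates, whereas \eqref{chern40} involves the curvature of $h_\cF$.'' Theorem~\ref{quasi orbi} gives a two-sided $C^0$ bound $C^{-1}h_0\le h_\cE\le Ch_0$ on $B\setminus W$, and Theorem~\ref{C++}(iii) gives only an $L^2$ bound on $\Theta(\cE,h_\cE)$. The classical removable-singularity results that would let you conclude smooth extension across a codimension-two set $W\subset B$ in complex dimension $3$ need the curvature in $L^p$ with $p>\dim_\C B=3$, whereas here $p=2$. Nor does a bounded endomorphism $s$ satisfying a quasilinear elliptic system on $B\setminus W$ automatically have bounded second derivatives near $W$; bootstrapping from $L^\infty(s)$ to $L^\infty(\Theta)$ requires additional input (either higher $L^p$-integrability of $\Theta$, or a weighted Sobolev/Moser iteration scheme adapted to $W$) that is not established in the paper and is not supplied in your sketch.

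The paper's proof takes a genuinely different and more economical route precisely because of this obstruction. After the same Stokes step and Cauchy-Schwarz against the $L^2$ curvature bound, the task becomes estimating $\int_B|A|^2_{h_\cE,\omega_B}\,|p^\star\dbar\chi_\ep|^2_{\omega_B}\,\omega_B^3$, where $A$ is the connection $1$-form. Rather than bounding $|A|$ pointwise, the authors write $|A|^2\lesssim\sum_i|D'e_i|^2_{h_\cE}$ for a holomorphic frame $(e_i)$, invoke the differential inequality $\Delta''_{\omega_B}|\tau|^2_{h_\cE}=|D'\tau|^2_{h_\cE}-C_1|\tau|^2_{h_\cE}$, and integrate by parts \emph{against the weight $\psi_\ep=|p^\star\dbar\chi_\ep|^2$} to push the derivatives onto $\psi_\ep$. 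Theorem~\ref{quasi orbi} then enters only through $C^0$ information ($|\tau|_{h_\cE}\in L^\infty$). The whole thing reduces to showing $\limsup_\ep\int_B(\psi_\ep+|\partial\psi_\ep|+|\Delta''_{\omega_B}\psi_\ep|)\,\omega_B^3<\infty$, which is proved by choosing the very particular iterated-log cut-offs $\chi_\ep=\Xi_\ep(\log\log(1/\rho))$ adapted to the shape of $\omega_B$ near the ramification locus, and then checking a concrete integral bound (Lemmas~\ref{flem2} and \ref{flem1}). Your cut-offs from Lemma~\ref{cutoff} give $\int|\nabla\rho_\delta|^{2+\varepsilon}\to 0$, which is tailored to the Green-function Harnack argument, but they are \emph{not} verified to satisfy the stronger $\partial\psi_\ep$ and $\Delta''\psi_\ep$ integrability conditions the paper actually needs in this proof. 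So both the choice of cut-off family and the way the $L^2$ curvature bound is exploited differ from your proposal, and both differences are essential to making the argument close without a pointwise curvature bound.
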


\noindent
Note that the integral in the above statement is convergent thanks to Remark~\ref{improper}. It is, however, not immediate that we can perform the aforementioned integration by parts a priori, since we ignore the regularity properties of $h_\cF$ on the closure of the annulus $0< \theta< 1$, this is the main source of difficulties. Even though Theorem~\ref{quasi orbi} plays an important role in the proof which will follow, Theorem~\ref{end,I}
does not follow immediately from it, since it only gives $C^0$ estimates, whereas \eqref{chern40} involves the curvature of $h_{\cF}$.

\begin{proof}
Again, we proceed in several steps. \\

\noindent
{\bf Step 1. {\it Reduction to some integrability properties of cut-off functions.}}

\medskip
\noindent 
First of all, the result is local. Using a partition of unity, one can assume that $\alpha$ has support in a set $V$ as in Setup~\ref{setup}. Without loss of generality, $\cE$ is trivial on $B$ with frame $(e_1, \ldots, e_r)$. We let $A$ be the connection $1$-form on $B\setminus W$ associated to $h_\cE$ and the chosen basis. We have $\Theta(\cE, h_\cE)=\dbar A$ on $B\setminus W$. 

Next, let $(\chi_\ep)$ be a family of cut-off functions for $V_{\rm sing}$ which will be specified later. The statement that needs to be proved is equivalent to 
\begin{equation}\label{3folds4}
\lim_{\ep\to 0}\int_{V}\Delta(\cF, h_\cF)\wedge \dbar\chi_\ep\wedge \alpha= 0.
\end{equation}
By pulling back to $B$ via the map $p$, the main assignment in \eqref{3folds4} is the following: show that the equality
\begin{equation*}\label{3folds5}
\lim_{\ep\to 0}\int_{B}\Tr\big(\Theta(\cE, h_\cE)\wedge \Theta(\cE, h_\cE)\big)\wedge p^\star(\dbar\chi_\ep\wedge \alpha)=0
\end{equation*}
holds, which in turn is equivalent to 
\begin{equation*}\label{3folds6}
\lim_{\ep\to 0}\int_{B}\Tr\big(A\wedge \Theta(\cE, h_\cE)\big)\wedge p^\star(\dbar\chi_\ep\wedge \dbar \alpha)=0.
\end{equation*} 

\noindent By using Cauchy-Schwarz inequality, it would be sufficient to prove that 
\begin{equation}\label{3folds77}
\lim_{\ep\to 0}\int_{B}|A|^2_{h_\cE, \omega_B}\cdot |p^\star(\dbar\chi_\ep)|^2_{\omega_B}\omega_B^3<\infty.
\end{equation} 
since the $L^2$ norm of the curvature tensor $\Theta(\cE, h_\cE)$ with respect to the inverse image metric $\omega_B$ is finite. From now on, set \[\psi_\ep:=|p^\star(\dbar\chi_\ep)|^2_{\omega_B},\]
keeping in mind that it could be replaced by any function dominating it. 
\smallskip

\noindent Our first observation is that $\displaystyle |A|^2_{h_\cE, \omega_B}\leq C |A|^2_{h_0, \omega_B}$  by Theorem~\ref{quasi orbi}. We 
can assume that the holomorphic frame $(e_i)_{i=1, \dots, r}$ is normal with respect to $h_0$ (i.e. $\langle e_i,e_j\rangle_{h_0}=O(|z|^2)$), and then we have 
\[|A|^2_{h_0, \omega_B}\leq C\sum_i |D'e_i|^2_{h_0, \omega_B}\]
for some constant $C> 0$ which is uniform on $B\setminus W$ (since it only depends on the metric $h_0$, and not on $A$).
Another application of Theorem~\ref{quasi orbi}
shows that it is enough to control the integral $\displaystyle \int_{B}|D' \tau|^2_{h_\cE, \omega_B}\cdot \psi_\ep \, \omega_B^3$ for $\tau=e_1, \ldots, e_r$. We have the elementary formula
\begin{equation}
\label{chern21}
\Delta''_{\omega_B} |\tau|_{h_\cE}^2=  |D'\tau|_{h_\cE}^2- C_1|\tau|_{h_\cE}^2
\end{equation}
which holds pointwise on $B\setminus W$ for some constant $C_1$.  After multiplying with $\chi \psi_\ep$ where $\chi$ is a cut-off function for $B$ as in Step 2 of Theorem~\ref{quasi orbi} whose gradient and Hessian are bounded with respect to $\omega_B$, one can perform an integration by parts. Since $|\tau|_{h_\cE}$ is bounded (cf \eqref{ross201}) 
\eqref{3folds77} reduces to showing that 
\begin{equation}\label{3folds7}
\limsup_{\ep\to 0}\int_{B}(\psi_\ep + |\partial \psi_\ep|+|\Delta''_{\omega_B} \psi_\ep|) \, \omega_B^3<\infty.
\end{equation} 
\medskip

\noindent
{\bf Step 2. {\it Estimating the integral \eqref{3folds7}.}}

\medskip
\noindent 
Recall that $(V,x)\simeq (\mathbb C,0) \times (S,s)$ where $i:(S,s)\hookrightarrow (\mathbb C^3,0)$ is a germ of surface with log terminal singularities. Let $q:(\mathbb C^2, 0) \to (S,s)$ be a uniformizing chart and let $f:=i\circ q:\mathbb C^2\to \mathbb C^3$ be the composition. Therefore, up to shrinking $V$, the ramified cover $p:B\to V$ can be written as follows 
\begin{equation}\label{3folds8}
(z, u, v)\to \big(z, f_1(u, v), f_2(u, v),f_3(u, v)\big)
\end{equation}
\smallskip 

\noindent Consider next the truncation function
\begin{equation}\label{3folds9}
\chi_\ep:= \Xi_\ep\big(\log\log(1/\rho)\big),
\end{equation}
where the function $\rho$ is defined by the formula 
\[\rho(z, u, v):= (1-\theta)^2(|u|^2+ |v|^2)+ \sum_{i=1}^3|f_i(u, v)|^2\]
and  $\Xi_\ep: \mathbb R_+\to \mathbb R_+$ is equal to 1 on the interval $[0, \ep^{-1}[$ and with zero on $[1+\ep^{-1}, \infty[$. We will also assume that the absolute value of the first and second derivative of $\Xi_\ep$ is uniformly bounded. 

The reason for the choice of $\rho$ as above is that we obviously have
\begin{equation}
\label{chern45}
\omega_B\simeq dd^c\rho+ \sqrt{-1}dz\wedge d\ol z,
\end{equation}
given Proposition~\ref{lemme1} (since the functions $f_i$ above are in fact restrictions of the coordinates $Z_i$ in $\C^4$ to $X$).
\smallskip

\noindent A rough estimate gives the inequality
\begin{equation}\label{3folds10}
|\dbar\chi_\ep|_{\omega_B}\leq \frac{-\Xi'_\ep \big(\log\log(1/\rho)\big)}{\rho^{1/2}\log \frac{1}{\rho}}=: \psi_\ep^{1/2}
\end{equation}
up to constant independent of $\ep$. Indeed, the LHS of \eqref{3folds10} is equal to \[ \frac{-\Xi'_\ep \big(\log\log(1/\rho)\big)}{\rho \log \frac{1}{\rho}}|\dbar \rho|_{\omega_B},\] and 
we use the fact that there exists a positive constant $C> 0$ such that the following pointwise inequality
\begin{equation}
\label{dbar rho}
|\partial\rho|_{\omega_B}\leq C\rho^{1/2}
\end{equation}
holds. This last inequality is elementary to check, as consequence of the fact that the norm of the gradient of the cutoff function $\theta$ 
with respect to the metric $\omega_X$ (hence with respect to $\omega_B$, too) is bounded.
\smallskip

\noindent According to Step 1 (more precisely, \eqref{3folds7}) we have to show that the following 
\begin{equation}\label{3folds11}
\lim\sup_{\ep\to 0}\int_{\supp \Xi_\ep'}(\psi_\ep+|\partial \psi_\ep|_{\omega_B}+|\Delta_{\omega_B}(\psi_\ep) |) \, \omega_B^3< \infty,
\end{equation}
holds true.
This basically boils down to showing that the following inequality
\begin{equation}\label{3folds14}
\lim_{\ep\to 0}\int_{\supp \Xi_\ep'}\frac{\big((1-\theta)^2\omega_{\rm orb}+ p^\star\omega_X\big)^2\wedge \sqrt{-1}dz\wedge d\ol z}{\rho^2 \log^2{(|u|^2+|v|^2)}}< \infty
\end{equation}
is true, where we use the notation 
\[\omega_{\rm orb}:= dd^c(|u|^2+ |v|^2).\] We will show at the beginning of Step 3 that \eqref{3folds14} implies \eqref{3folds11}, cf Lemma~\ref{flem2}. For the moment let us focus on proving that 
\eqref{3folds14} holds true.
 \smallskip
  
To this end, we remark that $\frac{1}{\rho^2} \big((1-\theta)^2\omega_{\rm orb}+ p^\star\omega_X\big)^2$ is dominated by 
\[\frac{\omega_{\rm orb}^2}{(|u|^2+ |v|^2)^2}+\frac{2\omega_{\rm orb}\wedge p^\star\omega_X}{(|u|^2+ |v|^2)\cdot (\sum_{i=1}^3|f_i(u, v)|^2)}+\frac{p^\star\omega_X^2}{(\sum_{i=1}^3|f_i(u, v)|^2)^2}  \]
so that we basically to bound expressions of the form
\begin{equation}\label{chern551}
\int_{\fA_\ep}\frac{1}{\log^2 \varphi_1}\frac{dd^c\varphi_1\wedge dd^c\varphi_2}{\varphi_1\varphi_2}
\end{equation}
independently of $\ep$, where we use the following notations.


\begin{enumerate}

\item We integrate over the domain 
\[\fA_\ep:= \Big\{a+\frac{1}{\ep}\leq \log\log\frac{1}{|u|^2+|v|^2}\leq b+ \frac{1}{\ep}\Big\}\]
for some constants $a< b$.

\item The functions $\varphi_\ell$ for $\ell=1,2$ are given by 
\begin{equation}\label{chern52}
\varphi_\ell:= |u|^2+ |v|^2, \quad \mbox{or} \quad \varphi_\ell:= \sum_i|f_i|^2.
\end{equation}
\end{enumerate}
This type of integral is being taken care of in the next and final step, cf Lemma~\ref{flem1} below. \\

\noindent
{\bf Step 3. {\it The final lemmas.}}

\medskip
\noindent 

In this paragraph, we prove the auxiliary results, Lemma~\ref{flem2} and Lemma~\ref{flem1} that were used in Step 2 above.  

First, we prove the following. 

\begin{lem}
\label{flem2}
\eqref{3folds14} implies \eqref{3folds11}.
\end{lem}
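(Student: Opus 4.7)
The plan is to derive \eqref{3folds11} from \eqref{3folds14} by a pointwise comparison of the integrands on $\supp \Xi_\ep'$. I would start by noting that $\alpha$ is supported away from $\{p_1,\ldots,p_k\}$, so after shrinking $V$ one may assume $\theta\equiv 0$ there; hence $\rho=|u|^2+|v|^2+\sum_i|f_i|^2$ and, since each $f_i$ vanishes at the origin, $\rho\sim |u|^2+|v|^2$ near $0$, so $\log^2(1/\rho)\sim \log^2(1/(|u|^2+|v|^2))$. Combined with \eqref{chern45} and with $(dd^c\rho)^3=0$ (as $\rho$ depends only on the two variables $(u,v)$), this yields $\omega_B^3\simeq ((1-\theta)^2\omega_{\rm orb}+p^\star\omega_X)^2\wedge\sqrt{-1}dz\wedge d\ol z$, so that the integrand of \eqref{3folds14} is comparable to $\omega_B^3/(\rho^2\log^2(1/\rho))$. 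The lemma thus reduces to the pointwise estimate
\[
\psi_\ep+|\partial\psi_\ep|_{\omega_B}+|\Delta_{\omega_B}\psi_\ep|\ \le\ \frac{C}{\rho^2\log^2(1/\rho)}\qquad\text{on }\supp(\Xi_\ep'),
\]
with $C$ independent of $\ep$.

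Setting $h(\rho):=\rho^{-1}\log^{-2}(1/\rho)$, so that $\psi_\ep=\big(\Xi_\ep'(\log\log(1/\rho))\big)^2 h(\rho)$, the bound on $\psi_\ep$ is immediate because $|\Xi_\ep'|\le C$ and $h(\rho)\le \rho^{-2}\log^{-2}(1/\rho)$ for $\rho\ll 1$. For $|\partial\psi_\ep|_{\omega_B}$, I would compute $h'(\rho)=O(\rho^{-2}\log^{-2}(1/\rho))$ and invoke the crucial estimate $|\partial\rho|_{\omega_B}\le C\rho^{1/2}$ from \eqref{dbar rho}: the main contribution $h'(\rho)\,\partial\rho$ has $\omega_B$-norm $O(\rho^{-3/2}\log^{-2}(1/\rho))$, which is dominated by the desired bound for $\rho\ll 1$. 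The chain-rule contribution from differentiating $\Xi_\ep'$ produces an additional factor $|\partial\rho|_{\omega_B}/(\rho\log(1/\rho))=O(\rho^{-1/2}\log^{-1}(1/\rho))$ times $h(\rho)$, and is absorbed similarly thanks to the uniform bound on $|\Xi_\ep''|$.

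The step requiring the most care, and which I expect to be the main obstacle, is the Laplacian. Expanding
\[
dd^c\psi_\ep = h''(\rho)\,\partial\rho\wedge\ol\partial\rho+ h'(\rho)\,dd^c\rho + (\text{lower-order terms involving }\Xi_\ep',\Xi_\ep''),
\]
with $|h''(\rho)|=O(\rho^{-3}\log^{-2}(1/\rho))$, one must bound each summand after tracing against $\omega_B$. The point is that $|\partial\rho\wedge\ol\partial\rho|_{\omega_B}\le|\partial\rho|_{\omega_B}^2\le C\rho$ exactly cancels one power of $\rho^{-1}$ coming from $h''$, producing $O(\rho^{-2}\log^{-2}(1/\rho))$; and $|dd^c\rho|_{\omega_B}\le C$, which follows from \eqref{chern45}, together with $|h'(\rho)|=O(\rho^{-2}\log^{-2}(1/\rho))$, handles the second term. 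The cross terms, where at least one derivative falls on $\Xi_\ep'$, are strictly lower order by the same matching of a factor $|\partial\rho|_{\omega_B}^2\le C\rho$ against a power of $\rho^{-1}$, using the uniform bounds on $\Xi_\ep',\Xi_\ep''$. Multiplying the combined pointwise estimate by $\omega_B^3$ and integrating over $\supp(\Xi_\ep')$ then yields $\int(\psi_\ep+|\partial\psi_\ep|_{\omega_B}+|\Delta_{\omega_B}\psi_\ep|)\,\omega_B^3\le C\int\omega_B^3/(\rho^2\log^2(1/\rho))$, whose right-hand side is controlled by \eqref{3folds14}, completing the plan.
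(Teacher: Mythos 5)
Your argument follows essentially the same route as the paper's: you differentiate $\psi_\ep$ and control each term using the two key inputs $|\partial\rho|_{\omega_B}\le C\rho^{1/2}$ from \eqref{dbar rho} and $|dd^c\rho|_{\omega_B}\le C$ from \eqref{chern45}, arriving at the same dominant pointwise bound $C\rho^{-2}\log^{-2}(1/\rho)$ on $\supp\Xi_\ep'$. One caveat: your opening reduction ``after shrinking $V$ one may assume $\theta\equiv 0$'' is not available in general, since $\alpha$ is only required to be supported away from $\{p_1,\ldots,p_k\}$ and the localizing chart $V$ may meet $U$ (indeed $\theta$ could even be $\equiv 1$ on $V$); the paper accordingly avoids this reduction and works with the general $\rho=(1-\theta)^2(|u|^2+|v|^2)+\sum|f_i|^2$. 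This does not damage your argument, because $\log(1/\rho)\sim\log(1/(|u|^2+|v|^2))$ holds regardless of $\theta$ (as $\sum|f_i|^2$ has finite polynomial vanishing order at the origin), and the quantities $(dd^c\rho)^3$ and the $z$-components of $dd^c\rho$ that appear when $\theta$ is non-constant are $O(\rho^{1/2})$ relative to $(dd^c\rho)^2\wedge\sqrt{-1}\,dz\wedge d\bar z$ and therefore harmless.
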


\begin{proof}[Proof of Lemma~\ref{flem2}]
In order to simplify the writing, we introduce the notation $\xi:= (\Xi_\ep')^2$, so that we have 
\[\psi_\ep= \frac{\xi\big(\log\log(1/\rho)\big)}{\rho \log^2 {\rho}}.\]
The first derivative of $\psi_\ep$ is equal to
\[\dbar \psi_\ep= \Big(-\frac{\xi'\big(\log\log(1/\rho)\big)}{\rho^2 \log^3 {\frac 1 \rho}}- \frac{\xi\big(\log\log(1/\rho)\big)}{\rho^2 \log^2{\frac 1 \rho}}
+2\frac{\xi\big(\log\log(1/\rho)\big)}{\rho^2 \log^3{\frac 1 \rho}}\Big)\dbar \rho.
\]
We will not compute explicitly the $\ddbar \psi_\ep$ since it would be too painful, but simply remark that the most singular terms are
\[\frac{1}{\rho^2 \log^2{\rho}}\ddbar\rho, \qquad \frac{1}{\rho^3 \log^2{\rho}}\partial\rho\wedge \dbar\rho.\]
Now by \eqref{chern45} and \eqref{dbar rho} we have 
\[|\ddbar\rho|_{\omega_{\theta}}\leq C, \qquad |\partial\rho\wedge \dbar\rho|_{\omega_{\theta}}\leq C\rho\]
from which it follows that 
\[|\partial \psi_\ep|\leq C\frac{1}{\rho^{\frac{3}{2}} \log^2{\rho}}, \qquad |\Delta''_{\omega_\theta}\psi_\ep|\leq C\frac{1}{\rho^{{2}} \log^2{\rho}}.\]
\end{proof}

Next, we show that the family of real numbers \eqref{chern551} is bounded. 

\begin{lem}
\label{flem1}
Let $n\ge 2$ and let $I=\{1,\ldots, n\}$. Let $u_1, \ldots, u_n$ be psh functions with analytic singularities on a ball $B\subset \mathbb C^n$ centered at the origin such that $(u_i=-\infty)=\{0\}$ for all $i\in I$. Assume that $u_i \le -1$ for all $i\in I$, pick real numbers $a<b$  and consider the domain
 \[A_\ep=\Big\{e^{a+\frac 1\ep} \le -\log |z|^2 \le e^{b+\frac 1\ep}\Big\}.\]
  With the notation above, we have 
 \[\limsup_{\ep\to 0} \int_{A_\ep} \frac{(-1)^n}{u_1\cdots u_n} \frac{dd^c e^{u_1}}{e^{u_1}}\wedge \ldots \wedge \frac{dd^c e^{u_n}}{e^{u_n}}<+\infty.\]
 \end{lem}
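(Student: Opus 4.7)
The plan is to combine two ingredients: a \L{}ojasiewicz-type estimate giving $|u_i|\sim -\log|z|^2$ near $0$, and an explicit bound for the wedge $\bigwedge_i(dd^c u_i+du_i\wedge d^c u_i)$ obtained by passing to a common log resolution. Together these will yield the desired uniform bound on the integral as $\ep\to 0$.

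Since each $u_i$ has analytic singularities with polar locus exactly $\{0\}$, applying \L{}ojasiewicz's inequality to the defining holomorphic functions produces positive constants $c,C$ with $-c\log|z|^2\le|u_i|\le -C\log|z|^2$ on a neighborhood of the origin. Hence on $A_\ep$ we have $\prod_i|u_i|^{-1}\le C_0(-\log|z|^2)^{-n}\le C_0 e^{-n(a+1/\ep)}$, which is the gain we will exploit.

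For the second ingredient I would take a common log resolution $\pi:\widetilde B\to B$ simultaneously resolving the singularities of the $u_i$'s. In local SNC coordinates $(w_1,\dots,w_n)$ on $\widetilde B$ near a point of the exceptional divisor $E=\{w_1\cdots w_k=0\}$, the polar-locus hypothesis yields $\pi^\star u_i=\sum_{j=1}^k a_{ij}\log|w_j|^2+g_i$ with $a_{ij}<0$ and $g_i$ smooth bounded. Setting $t_j:=-\log|w_j|^2$ and $\tau_i:=-\sum_j a_{ij}t_j>0$, one has $dd^c\pi^\star u_i$ bounded and $d\pi^\star u_i\wedge d^c\pi^\star u_i=d\tau_i\wedge d^c\tau_i$ modulo bounded corrections. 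The key algebraic observation is that each $\partial\tau_i$ lies in the $k$-dimensional complex span of $\{dw_j/w_j\}_{j\le k}$, so $\bigwedge_{i\in S}(d\tau_i\wedge d^c\tau_i)=0$ whenever $|S|>k$; for $|S|\le k$ the magnitude of this wedge is $\prod_{j\in T}|w_j|^{-2}$ for some $T\subset\{1,\dots,k\}$ of size $|S|$, weighted by squares of minors of $(-a_{ij})$.

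Finally, changing variables $t_j=-\log|w_j|^2$ and rescaling $s_j:=e^{-1/\ep}t_j$, and using that $-\log|z|^2\circ\pi\sim\sum_j M_jt_j$ near $E$ for some positive constants $M_j$, the leading ($|S|=k$) contribution to $\int_{\pi^{-1}(A_\ep)\cap\mathrm{chart}}$ becomes
\[
e^{(k-n)/\ep}\int_{\{\sum_j M_js_j\sim 1\}}\frac{\prod_{j=1}^k ds_j}{\prod_{i=1}^n\sum_j(-a_{ij})s_j}\cdot dV',
\]
where $dV'$ is a bounded measure on the non-exceptional coordinates. For $k=n$ the rescaled integrand is bounded on the compact region (since all $-a_{ij}>0$) and integrates to a finite quantity; for $k<n$ the prefactor $e^{(k-n)/\ep}\to 0$. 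Sub-leading terms in the expansion produce strictly smaller singularities and are treated identically. Summing over a finite cover of $E$ yields the required uniform bound. The main technical difficulty I foresee lies in the careful accounting of sub-leading and cross-terms, as well as possible vanishing of some minors $\det(a_{ij})_{S,T}$; such degeneracies only reduce contributions, so they do not worsen the overall singularity.
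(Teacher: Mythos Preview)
Your approach is essentially the paper's: pass to a common log resolution, write $e^{-u_i}dd^c e^{u_i}=dd^c u_i+du_i\wedge d^c u_i$ in local SNC coordinates, observe that at most $k$ factors can contribute the singular part, and then estimate the resulting explicit integral over the annular region. One correction: the coefficients satisfy $a_{ij}>0$, not $a_{ij}<0$ (since $u_i\to-\infty$ exactly on $\mu^{-1}(0)=E$, each component of $E$ carries strictly positive Lelong number for every $u_i$); you use $-a_{ij}>0$ throughout, so this is only a sign slip, but it is precisely the positivity of \emph{all} $a_{ij}$ that makes your rescaled integrand bounded on the compact simplex, so it deserves an explicit justification rather than being asserted. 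The paper's endgame is a bit more direct than your rescaling: it simply drops $n-k$ of the factors $|u_i|\ge 1$ from the denominator, bounds the remaining $k$ by $(-\log|z_1\cdots z_k|^2)^{k}\ge e^{k(a_x+1/\ep)}$, and computes $\prod_{j=1}^k\int dr_j/r_j\le 2^{-k}e^{k(b_x+1/\ep)}$, yielding the uniform bound $2^{-k}e^{k(b_x-a_x)}$ without separating the cases $k=n$ and $k<n$.
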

 
 \medskip
 
\begin{proof}[Proof of Lemma~\ref{flem1}]
 First, since $u_i$ have analytic singularities concentrated at the origin, there exists constant $\alpha,\beta, C>0$ such that up to shrinking the ball slightly
 \begin{equation}
 \label{diff}
 \forall i\in I, \quad \alpha u_1-C  \le u_i  \le \beta u_1+C. 
 \end{equation}
There exists a finite set $\Gamma$ such that for each index $i\in I$, there exists holomorphic functions $(f_{i,\gamma})_{\gamma\in \Gamma}$ on $B$ such that 
\[u_i=\log (\sum_{\gamma\in \Gamma} |f_{i,\gamma}|^2)+\mathrm{smooth},\]
 where we allowed $f_{i,\gamma}\equiv 0$ so that $\Gamma$ can be chosen independently of $i$. Now, let $\mu:X\to B$ be a log resolution of the ideal sheaf $(f_{i,\gamma})_{i\in I, \gamma \in \Gamma}\subset \mathcal O_B$ and let $E=\mu^{-1}(0)$ be the exceptional divisor. We need to show that 
 \[\limsup_{\ep \to 0} \int_{\mu^{-1}(A_\ep)} \frac{(-1)^n}{\mu^*u_1\cdots \mu^*u_n} \mu^*\Big(\frac{dd^c e^{u_1}}{e^{u_1}}\wedge \ldots \wedge \frac{dd^c e^{u_n}}{e^{u_n}}\Big)<+\infty.\]
Let $x\in E$. We can find a neighborhood $U_x$ of $x\in X$, an integer $1\le k \le n$, a system of holomorphic coordinates $z_1, \ldots, z_n$ on $U_x$ centered at $x$ and positive numbers $a_{i1}, \ldots, a_{ik}$  such that
\begin{equation}
\label{pullback}
\forall i\in I, \quad \mu^*u_i = \sum_{j=1}^k a_{ij} \log |z_j|^2+\mathrm{smooth} \quad \mathrm{on } \, U_x.
\end{equation}
The fact that all coefficients $a_{ij}$ are positive follows from \eqref{diff}. Moreover, there exist constants $a_x,b_x$ such that 
\[\mu^{-1}(A_\ep) \cap U_x \subset \Big(e^{a_x+\frac 1\ep} \le -\log |z_1\cdots z_k|^2 \le e^{b_x+\frac 1\ep}\Big)=:A_{x,\ep}.\]
Thanks to \eqref{pullback} and the formula 
\[e^{-u}dd^c e^u=dd^c u+du\wedge d^cu,\]
it is sufficient to bound uniformly the integral
\[\int_{A_{x,\ep}} \frac{(-1)^{n^2/2}}{(-\log |z_1\cdots z_k|^2)^k} \bigwedge_{j=1}^k\frac{dz_j\wedge d\bar z_j}{|z_j|^2}\wedge \bigwedge_{j=k+1}^n dz_j\wedge d\bar z_j.\]
Set $r_i=|z_i|$ for $1\le i \le k$, which are subject to the conditions $r_i\le 1$ and $ e^{-e^{b_x+\frac 1\ep}}\le r_1^2\cdots r_k^2 \le e^{-e^{a_x+\frac 1\ep}}$. In particular, we have $r_i^2 \ge e^{-e^{b_x+\frac 1\ep}}$ for all $i$. Therefore, the integral above is dominated by
\[e^{-ka_x-\frac k\ep} \prod_{i=1}^k\int_{e^{-e^{b_x+\frac 1\ep}} \le r_i^2 \le 1} \frac{dr_i}{r_i}=2^{-k}e^{k(b_x-a_x)}.\]
Hence our integral is under control over $\mu^{-1}(A_\ep)\cap U_x$, and we can conclude by compactness of $E$.
\end{proof}
\noindent
This concludes the proof of Theorem \ref{end,I}. 
\end{proof}

\subsection{Proof of Theorem~\ref{thm1}} 
\label{ssec 5}
Consider the metric $h_{\cF}$ on $\cF$ constructed at the 
previous step. Since it is an orbifold hermitian metric on $X\setminus U$ and $\phi$ satisfies \eqref{chern1}, we have for any $\ep$ small enough
{\small
\begin{eqnarray}
C\Delta(\cF)\cdot [\omega_X] &=& C\int_{X}\Delta(\cF, h_\cF)\wedge (\omega_X+ dd^c\phi) \nonumber \\
&= & \int_{X}\chi_\ep \Delta(\cF, h_\cF)\wedge \omega_\theta -  \int_{X}\chi_\ep \Delta(\cF, h_\cF)\wedge dd^c\left((C\theta'^2+\theta^2)\varphi_p+ (1-\theta)^2\varphi_{\rm orb}\right) \label{split}
\end{eqnarray}
}
where $(\chi_\ep)_{\ep> 0}$ is converging to the characteristic function of $X\setminus \{p_1, \ldots, p_k\}$. Note that the integrals above are convergent thanks to Remark~\ref{improper}. 

The first integral on the RHS of the above equality is semi-positive since its integrand is pointwise semi-positive by the HE condition, cf e.g. the proof of \cite[Theorem~4.4.7]{Koba}. 

Concerning the second one, we show next that one can integrate by parts thanks to Theorem~\ref{end,I}. Indeed, if we define
\[\alpha:= \chi_\ep \partial \left((C\theta'^2+\theta^2)\varphi_p+ (1-\theta)^2\varphi_{\rm orb}\right),\]
then we have to make sure that $\alpha$ and its $\dbar$ are bounded with respect to $\omega_\theta$ --note that the bound in question is not required to be independent of $\ep$. But this is clear, since $\varphi_p$ is a potential for $\omega_X$, and moreover the derivative of $\theta$ is bounded.  
\smallskip

Thus, the quantity we have to analyze 
becomes the following expression
\begin{equation}\label{chern6}
\int_{X}\Delta(\cF, h_\cF)\wedge \partial \chi_\ep\wedge \dbar\left((C\theta'^2+\theta^2)\varphi_p+ (1-\theta)^2\varphi_{\rm orb}\right).
\end{equation}
For $\ep\ll 1$, the integral \eqref{chern6} coincides with
\begin{equation}\label{chern7}
(C+1)\int_{U}\Delta(\cF, h_\cF)\wedge \partial \chi_\ep\wedge \dbar\varphi_p,
\end{equation}
and this last expression converges to zero provided that we choose carefully the truncation family $(\chi_\ep)_{\ep> 0}$ as follows. For each point $p_i$ we take 
\[\Xi_\ep\big(\log\log(1/|Z|^2)\big)\]
where $Z$ above are coordinates centred at $p_i$.  
The function $\varphi_p$ is supposed to be a potential for $\omega_X$ locally near each of $p_i$ and we certainly can construct one such that 
$|\partial\varphi_p|= \mathcal O(|Z|)$, i.e. its gradient is vanishing at $p$. It follows that
\begin{equation}\label{chern8}
\sup_{U_{\rm reg}}|\partial \chi_\ep\wedge \dbar\varphi_p|_{\omega_X}\leq C
\end{equation}
for some constant $C$ independent of $\ep$. Then, since the curvature form belongs to $L^2$ we can conclude that \eqref{chern7} tends to zero
as $\ep\to 0$.

\begin{rem}
The property \eqref{chern8} is precisely what forces us to consider the 
metric $\omega_\theta$: if we simply work with $\omega$, then we have no trouble with the integration by parts, 
but it is not clear how to evaluate
\[\sup_{U_{\rm reg}}|\partial \chi_\ep\wedge \dbar\varphi_{\rm orb}|_{\omega}\]
as $\ep\to 0$. Of course, the $L^1$-norm of the function $|\partial \chi_\ep\wedge \dbar\varphi_{\rm orb}|_{\omega}$ is converging to zero,
but then in order to obtain the desired conclusion, we would need the curvature of $(\cF, h_\cF)$ to be bounded, instead of $L^2$. 
\end{rem}
\medskip


\subsection{Proof of Corollary~\ref{equality}}
\label{ssec 6}
We have seen in \eqref{split} that the first term of the RHS is non-negative while the second one goes to zero when $\ep\to 0$. Therefore, in the equality case we must have
\[\int_X \Delta(\cF, h_\cF) \wedge \om_{\theta} = 0.\] 
Recall from Setup~\ref{setup} that $\cF$ is locally free on $X_{\rm reg}^\circ:=X_{\rm reg}\setminus \{p_1, \ldots, p_k\}$, hence the integrand is smooth on that locus. Since  $h_\cF$ is HE with respect to $\omega_{\theta}$, the integrand coincides up to a dimensional constant with the squared norm of $\Theta(\cF, h_\cF)-\frac 1r \mathrm{tr}_{\rm End}(\Theta(\cF, h_\cF)) \cdot \mathrm{Id}_{\cF}$, cf the proof of \cite[Theorem~4.4.7]{Koba}. Therefore $\Theta(\cF, h_\cF)$ is diagonal on $X_{\rm reg}^\circ$, i.e. $\cF|_{X_{\rm reg}^\circ}$ is projectively hermitian flat there. In other words, there is a representation $\rho: \pi_1(X_{\rm reg}^\circ)\to \mathrm{PU}(r,\mathbb C)$ such that $\mathbb P(\cF|_{X_{\rm reg}^\circ})\simeq (\widetilde{X_{\rm reg}^\circ}\times \mathbb P^{r-1})/\rho$ where $\rho$ acts diagonally in the natural way, cf e.g. \cite[Proposition~1.4.22]{Koba}. 

Now, let $p\in X_{\rm reg}\setminus X_{\rm reg}^\circ$ and let $U$ be a (small) contractible Stein neighborhood of $p$. Set $U^\circ:=U\setminus \{p\}$, which is still simply connected. We have $\mathbb P(\cF|_{U^\circ}) \simeq \mathbb P(\mathcal O_{U^\circ}^{\oplus r})$, hence there exists a line bundle $L^\circ$ on $U^\circ$ such that $\cF|_{U^\circ}\simeq (L^\circ)^{\oplus r}$. Let $i:U^\circ \hookrightarrow U$ be the inclusion. We claim that $L:=i_*(L^\circ)$ is coherent. Indeed since $\cF$ is coherent and reflexive, we have $\cF|_U=i_*(\cF|_{U^\circ})=L^{\oplus r}$ hence $L$ is coherent (and reflexive, too).  
Since $L$ has rank one and $U$ is smooth, $L$ is a line bundle. Moreover, $L$ is trivial since $U$ is Stein and contractible. In particular, $\cF$ is locally free near $p$. Moreover, the isomorphism
\[\pi_1(X_{\rm reg}^\circ) \to \pi_1(X_{\rm reg})\]
induced by the inclusion $X_{\rm reg}^\circ\hookrightarrow X_{\rm reg}$ shows that $\cF|_{X_{\rm reg}}$ is projectively hermitian flat, which shows the first item in the corollary. 

Next, we claim that $X$ admits a maximally quasi-étale cover $p:Y\to X$ in the sense of e.g. \cite[Definition~5.3]{CGGN}. This is a consequence of the proof of the existence of such a cover when $X$ is quasi-projective given by \cite{GKP16} and the validity of the results of \cite{BCHM} for projective morphisms between complex spaces as proved by Fujino \cite{Fujino22}, as explained in \cite[Remark~6.10]{CGGN}. From the previous step, $p^\star(\cF|_{X_{\rm reg}})$ is locally free and projectively hermitian flat on $p^{-1}(X_{\rm reg})$, and for the same reasons as above, it extends to a projectively hermitian flat bundle on $Y_{\rm reg}$ which is nothing but $p^{[*]}\cF$ by reflexivity of both sheaves. The second item of the corollary is now straightforward by the definition of maximally quasi-étale cover, cf e.g. \cite[Proposition~3.10]{GKP22} for the relevant details.

\pagestyle{empty}
\section*{Appendix by F. Campana\protect\footnote[1]{Fr\'ed\'eric Campana, Institut Elie Cartan,
Universit\'e Henri Poincar\'e, BP 239, F-54506. Vandoeuvre-les-Nancy C\'edex, \emph{email:} frederic.campana@univ-lorraine.fr},
A. H\"oring\protect\footnote[2]{Universit\'e C\^ote d'Azur, CNRS, LJAD, France, \emph{email:} andreas.hoering@univ-cotedazur.fr} and T. Peternell\protect\footnote[3]{Mathematisches Institut, Universit\"at Bayreuth, 95440 Bayreuth, Germany, \emph{email:}  thomas.peternell@uni-bayreuth.de}}

\addtocontents{toc}{\protect\setcounter{tocdepth}{0}}

\label{appendix}
\addcontentsline{toc}{section}{Appendix by F. Campana, A. Höring and T. Peternell}

\subsection*{Abundance on K\"ahler threefolds}

In the paper \cite{CHP16}, the following abundance type theorem (Theorem 8.2) is stated

\begin{theoremapp} \label{thm1}   Let $X$ be a normal  $\mathbb Q$-factorial compact K\"ahler threefold with at
most terminal singularities 
such that $K_X$ is nef. If the numerical dimension $\nu(X) \geq 2$, then the Kodaira dimension $\kappa (X) \geq 1$.
\end{theoremapp}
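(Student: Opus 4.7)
The plan is to establish a Miyaoka-type inequality $K_X\cdot c_2(X)\ge 0$ by applying the Bogomolov-Gieseker inequality of Theorem~A to the Harder-Narasimhan pieces of $\Omega_X^{[1]}$, and then conclude by Riemann-Roch. This parallels the projective proof of \cite{CHP16}, the essential new input being that Theorem~A now makes BG available on singular Kähler threefolds.

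First I would reduce: if $\nu(X)=3$ then $K_X$ is big and the non-vanishing/semiampleness results already proven for Kähler terminal threefolds give $\kappa(X)=3$. So assume $\nu(X)=2$, i.e.\ $K_X^3=0$ while $K_X^2\cdot\alpha>0$ for every Kähler class $\alpha$. Since $X$ is terminal it is smooth in codimension two, so every reflexive sheaf on $X$ is automatically a $\mathbb Q$-sheaf in the sense of Theorem~A, and in particular the Chern numbers $c_1^2\cdot[\omega]$ and $c_2\cdot[\omega]$ are well defined.

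The core step is to prove $K_X\cdot c_2(X)\ge 0$. Pick an auxiliary Kähler class $\alpha_0$ and set $[\omega_t]:=[K_X]+t[\alpha_0]$, which is Kähler for $t>0$. Take the Harder-Narasimhan filtration $0=\mathcal F_0\subset\mathcal F_1\subset\cdots\subset\mathcal F_k=\Omega_X^{[1]}$ with respect to $[\omega_t]$, with semistable graded pieces $\mathcal G_i$ of rank $r_i$ and slope $\mu_i$. Refining to Jordan-Hölder factors, Theorem~A applies and gives $\Delta(\mathcal G_i)\cdot[\omega_t]\ge 0$. The standard rank-weighted Chern class identity
\[\Delta(\Omega_X^{[1]})=\sum_i\tfrac{r}{r_i}\Delta(\mathcal G_i)-\tfrac{1}{r}\sum_{i<j}r_ir_j\Big(\tfrac{c_1(\mathcal G_i)}{r_i}-\tfrac{c_1(\mathcal G_j)}{r_j}\Big)^2,\]
combined with the generic nefness of $\Omega_X^{[1]}$ against $K_X$ (the terminal Kähler analogue of Miyaoka's classical theorem, obtainable by pulling back to a partial orbifold resolution à la \cite{Ou} where Miyaoka's argument applies verbatim), gives, upon intersecting with $K_X$ and letting $t\to 0$, the inequality $6\,c_2(X)\cdot K_X\ge 2 K_X^3=0$.

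Finally, since $K_X^3=0$ and $c_1(X)=-K_X$, Riemann-Roch reads
\[\chi(X,mK_X)=\frac{m}{12}\,K_X\cdot c_2(X)+\chi(\mathcal O_X).\]
Serre duality yields $h^3(X,mK_X)=0$ for $m\ge 2$ (using that $K_X^2\cdot\alpha>0$ rules out sections of $(1-m)K_X$), and $h^2(X,mK_X)$ is bounded uniformly in $m$ by a Kawamata-Viehweg type vanishing on a resolution. Hence either $K_X\cdot c_2(X)>0$ and $h^0(mK_X)\to\infty$, or $K_X\cdot c_2(X)=0$ and Corollary~\ref{equality} applied to the HN-pieces constrains the structure of $\Omega_X^{[1]}$ very precisely (projective flatness on $X_{\rm reg}$ after a quasi-étale cover), from which $\kappa(X)\ge 1$ follows by the geometric analysis already carried out in the projective case. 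The main obstacle is the Miyaoka-type inequality itself: its Kähler version requires both Theorem~A for the semistable pieces and, in the borderline regime $K_X\cdot c_2(X)=0$, the equality description of Corollary~\ref{equality}; absent these, the HN filtration argument cannot be closed up in the singular Kähler setting.
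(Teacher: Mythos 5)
Your plan and the paper's agree on the essential point that Theorem~A provides the missing Bogomolov--Gieseker input, but after that the routes diverge and your version has a genuine gap.

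The paper does not prove a bare Miyaoka inequality $K_X\cdot c_2(X)\ge 0$ for the original terminal $X$. The gap in \cite{CHP16} occurs after a reduction (not repeated in the appendix) to a normal $\Q$-factorial compact K\"ahler threefold $X$ with \emph{klt} (possibly non-terminal, and in particular possibly singular in codimension two) singularities together with a divisor $D\in|mK_X|$, $B=\operatorname{Supp}D$, $(X,B)$ lc, $K_X+B$ nef with $\nu=2$. The inequality one actually needs is $L\cdot(K_X^2+c_2(X))\ge 0$ with $L=m(K_X+B)$, which by \cite[Lemma~14.3.2]{Uta92} reduces to $L\cdot c_2(\Omega_X(\log B))\ge 0$. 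The paper then applies Theorem~\ref{thm3} (a consequence of Theorem~A) to the reflexive $\Q$-sheaf $\cF=\Omega_X(\log B)$ on this klt $X$; it is precisely in this regime that the ``$\Q$-sheaf on a klt K\"ahler space'' hypothesis of Theorem~A is essential, and your observation that terminal threefolds are smooth in codimension two (so every reflexive sheaf is a $\Q$-sheaf for free) does not help, because the object the proof ultimately acts on is not $\Omega_X^{[1]}$ on the original terminal model.

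The sharper gap, however, is the generic nefness step. You assert that generic nefness of $\Omega_X^{[1]}$ with respect to $K_X$ is ``the terminal K\"ahler analogue of Miyaoka's classical theorem, obtainable by pulling back to a partial orbifold resolution \`a la \cite{Ou} where Miyaoka's argument applies verbatim.'' This is not correct: Miyaoka's generic semi-positivity theorem is proved by bend-and-break on curves cut out by very ample divisors, a technique that is unavailable on a non-algebraic K\"ahler threefold, and an orbifold resolution does not change that. The paper's Proposition~\ref{prop1} instead establishes generic nefness of $\Omega_X$ against a nef class $\alpha$ by passing to a resolution $\pi\colon\wX\to X$, using Lemma~\ref{lemma-kappa} (that $\kappa(\wX)\ge 0$ because the non-algebraic hypothesis rules out uniruledness of $\wX$ via the MRC fibration) to write $K_{\wX}=L+D$ with $L=0$ nef and $D$ effective, and then invoking Enoki's theorem \cite{Enoki}. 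This replacement of Miyaoka's algebraic argument by the Enoki/Kodaira-dimension argument is exactly the non-trivial point that your sketch waves away. Relatedly, your treatment of the borderline case $K_X\cdot c_2(X)=0$ via Corollary~\ref{equality} is not how the paper closes the argument; the paper's Theorem~\ref{thm3} keeps the term $L^3$, uses $L^3=0$, and then feeds the resulting inequality into the \cite{Uta92} Riemann--Roch machinery, rather than invoking the projective-flatness description of the equality case.
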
 

This is an important step in the proof the Abundance Conjecture  for K\"ahler threefolds
\cite[Thm.1.1]{CHP16}: 

\begin{theoremapp} \label{thm-abundance}   Let $X$ be a normal $\mathbb Q$-factorial compact K\"ahler threefold with at
most terminal singularities such that $K_X$ is nef. Then $K_X$ is semi-ample. 
\end{theoremapp}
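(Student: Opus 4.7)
The plan is to split into the four cases $\nu(X)\in\{0,1,2,3\}$, using Theorem~\ref{thm1} as the decisive input whenever $\nu(X)\geq 2$. The overall strategy mirrors the classical Miyaoka--Kawamata proof of abundance for projective threefolds, but each ingredient must be replaced by its K\"ahler-threefold counterpart: the $\mathbb{Q}$-factorial minimal model program, the cone and contraction theorems, and the base-point-free theorem for K\"ahler threefolds established in earlier work of H\"oring--Peternell, Campana--H\"oring--Peternell, and Das--Hacon. The extremal cases $\nu(X)=3$ and $\nu(X)=0$ are handled first: in the former, $K_X$ is nef and big, and the K\"ahler base-point-free theorem in dimension three produces the required semi-ample morphism; in the latter, $K_X\equiv 0$, and the structure theorem for K\"ahler threefolds with numerically trivial canonical class yields a finite quasi-\'etale cover splitting as a product of factors with torsion canonical class, forcing $K_X$ itself to be torsion.

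For $\nu(X)=2$, Theorem~\ref{thm1} gives $\kappa(X)\geq 1$, so there is a non-trivial Iitaka fibration $f\colon X\dashrightarrow Y$ onto a variety $Y$ of dimension one or two. After resolving indeterminacies and replacing $f$ by an Iitaka model, a Fujino--Mori type canonical bundle formula expresses $K_X$ as the pullback of a nef $\mathbb{Q}$-divisor on $Y$ plus a moduli/discriminant contribution controlled by the variation of the general fiber. Combining abundance on $Y$ (which has dimension at most two and is therefore classical) with a relative base-point-free argument in the K\"ahler category then yields semi-ampleness of $K_X$.

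The remaining case $\nu(X)=1$ is not addressed by Theorem~\ref{thm1}, and I expect this to be the main obstacle. One must first prove non-vanishing $\kappa(X)\geq 0$, which in the projective setting rests on Shokurov-type arguments that do not transpose directly to the K\"ahler category; instead one would combine Riemann--Roch asymptotics on a desingularization with Serre duality and the nefness of $K_X$, possibly routed through stability properties of $\Omega^1_X$ and the orbifold Bogomolov--Gieseker inequality of the present paper (Theorem~\ref{thmA}), which feeds back into positivity statements for the cotangent sheaf. Once $\kappa(X)\geq 0$ is secured, the Iitaka fibration maps to a curve, and the equality $\kappa(X)=\nu(X)=1$ follows via semi-positivity of relative dualizing sheaves together with the K\"ahler base-point-free theorem. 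The conceptual difficulty throughout is that the K\"ahler MMP machinery in dimension three is itself deep, and coherently assembling its pieces into a complete abundance proof, together with the non-vanishing step in the $\nu(X)=1$ case, is where the genuine work lies.
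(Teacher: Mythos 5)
The paper does not actually prove Theorem~\ref{thm-abundance}: it is quoted from \cite{CHP16} (Theorem~1.1 there). What the appendix contributes is a repair of a gap, communicated by Das and Ou, in the proof of Theorem~\ref{thm1} (i.e.\ Theorem~8.2 of \cite{CHP16}), which is the step ``$\nu(X)\geq 2 \Rightarrow \kappa(X)\geq 1$'' used inside the proof of abundance. The logical chain in the appendix is: the orbifold Bogomolov--Gieseker inequality (Theorem~\ref{thm2}) gives the orbifold version of \cite[Theorem~7.2]{CHP16} (Theorem~\ref{thm3}), which, combined with Proposition~\ref{prop1} on generic nefness of $\Omega_X$, establishes Inequality~\eqref{Eq3}, hence \eqref{Eq2} and \eqref{Eq1}, completing the proof of Theorem~\ref{thm1}. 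Your proposal instead attempts to sketch a from-scratch proof of abundance for K\"ahler threefolds by casework on $\nu(X)$, which is a vastly larger undertaking and not what the paper is doing.

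Beyond the mismatch in scope, your placement of the paper's main result is inverted. You suggest routing the orbifold Bogomolov--Gieseker inequality through the $\nu(X)=1$ case (via non-vanishing and stability of $\Omega^1_X$), and describe $\nu(X)=1$ as the main obstacle. In fact, the Bogomolov--Gieseker inequality enters exactly in the case you flag as already settled: it is the key input to Theorem~\ref{thm1}, i.e.\ the $\nu(X)\geq 2$ case, where the gap actually was. The argument there is a Miyaoka-type inequality for $c_2$ of $\Omega_X(\log B)$ on a klt threefold with possibly non-canonical singularities in codimension two, which is precisely where orbifold Chern classes and the orbifold BG inequality become necessary. Your $\nu=2$ paragraph, by contrast, takes Theorem~\ref{thm1} as a black box and hands off to a canonical-bundle-formula argument, while your $\nu=1$ paragraph is an acknowledged wish list rather than a proof. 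As a review of the actual proof, the honest summary is: Theorem~\ref{thm-abundance} is cited; the new content is the repair of Theorem~\ref{thm1} via Theorems~\ref{thm2}--\ref{thm3} and Proposition~\ref{prop1}, and your proposal does not engage with that repair.
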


Note that Theorem \ref{thm1} was already shown, \cite{Pet01},  for all compact K\"ahler threefolds $X$ unless there is no positive-dimensional compact subvariety through the very general point of $X$ and
$X$ is not bimeromorphic to an \'etale quotient of a torus, a case which needs to be ruled out. 

The proof of Theorem \ref{thm1} however contains a gap, noticed and communicated to us by Das and Ou.  We explain the gap and show how to fix it using the main result of the companion paper, that is Theorem~\ref{thmA} above.  For the reader's convenience, we recall below Theorem~\ref{thmA} which is the "orbifold version" of \cite[Theorem 7.1]{CHP16}:

\begin{theoremapp} \label{thm2}
Let $X$ be a normal compact K\"ahler threefold with klt singularities,
and let $\omega_X$ be a K\"ahler form on $X$.  Let $\cF$ be a reflexive coherent $\Q$-sheaf on $X$ of rank $r>0$ that is $[\omega_X]$-stable.
Then we have
$$
[\omega_X] \cdot c_2(\cF)  \geq \big(\frac{r-1}{2r}\big)  [\omega_X] \cdot  c_1^2(\cF).
$$ 
\end{theoremapp}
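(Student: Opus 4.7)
The plan is to prove the equivalent statement that $\Delta(\cF)\cdot[\omega_X]\geq 0$, where $\Delta(\cF)=2rc_2(\cF)-(r-1)c_1(\cF)^2$, by computing this intersection number via Chern-Weil theory with respect to a carefully chosen metric on $\cF$. The main difficulty is that the $\Q$-sheaf hypothesis only applies away from a finite set $\{p_1,\dots,p_k\}$ of non-quotient singular points. To handle this, I would first construct an interpolating Kähler metric $\omega_\theta$ in the class $C[\omega_X]$ which is orbifold Kähler on $X\setminus U$ (where $U$ is a small neighborhood of the $p_i$) and flattens out (in the sense that $\omega_\theta = dd^c$ of a globally defined potential modulo the background) near each $p_i$. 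The key analytic input at this stage is to verify that $\omega_\theta$ admits smooth Kähler approximants on a resolution whose Monge-Ampère densities are uniformly bounded in $L^p$ for some $p>1$; this allows one to apply the Guo-Phong-Song-Sturm estimates to obtain a Green function $G_x$ satisfying the properties (G.1)--(G.4), and, combined with a construction of explicit cut-off functions near quotient singularities, yields a local Harnack-type $L^\infty$ estimate (Corollary~\ref{LI estimates theta}).

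Next, I would invoke the Chen-Guenancia-Paun-type construction (Theorem~\ref{C++}) to produce a Hermite-Einstein metric $h_\cF$ on $\cF|_{X_0}$ with respect to $\omega_\theta$, orbifold on $X\setminus U$, satisfying uniform $\Tr$-bounds and $L^2$ curvature control. The key improvement I would then prove is that $h_\cF$ is in fact quasi-isometric to an auxiliary smooth orbifold metric on the entire locus $X\setminus \{p_1,\dots,p_k\}$ (Theorem~\ref{quasi orbi}). The strategy here is to bound $|\tau|^2_{h_\cF}$ and $|\tau|^2_{h_\cF^\star}$ for holomorphic frames by applying the $L^\infty$ Harnack estimate to $\chi\psi^\e$ where $\psi$ is such a squared norm: the HE equation forces $\Delta''_{\omega_B}\log\psi$ to be bounded below, which after a cut-off manipulation turns into $\Delta''_{\omega_B}(\chi\psi^\e) \geq -C_\e \tilde\chi\psi^\e$, and the initial $L^p$ integrability follows from the weaker polynomial bound obtained from the quotient metric construction.

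With these $C^0$ estimates in hand, the crucial step is to establish that the current $\Delta(\cF,h_\cF)$ satisfies an integration-by-parts identity: for any bounded $(1,0)$-form $\alpha$ with bounded $\bar\partial$ and support in $X\setminus\{p_1,\dots,p_k\}$, one has $\int_{X_0}\Delta(\cF,h_\cF)\wedge\bar\partial\alpha=0$ (Theorem~\ref{end,I}). This reduces, after pulling back via the local uniformizing cover and a Cauchy-Schwarz argument, to controlling the $L^2(\omega_B^3)$ norm of $|\bar\partial\chi_\e|$ for a suitable family of cut-offs $\chi_\e$ for the ramification locus; I would use the pluripotential-style truncations $\chi_\e = \Xi_\e(\log\log(1/\rho))$ with $\rho$ a weighted sum of the pull-backs of local coordinates, and reduce the uniform boundedness to a resolution-of-singularities calculation bounding integrals of the shape $\int_{A_\e}\prod\frac{dd^c e^{u_i}}{-u_i\, e^{u_i}}$ (Lemma~\ref{flem1}). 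I expect this last step, establishing $\bar\partial$-closedness across the quotient singular locus despite only having $L^2$ curvature and $C^0$ control on $h_\cF$, to be the main obstacle.

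Finally, to conclude, I would decompose
\[
C\,\Delta(\cF)\cdot[\omega_X] \;=\; \int_X \chi_\e\,\Delta(\cF,h_\cF)\wedge\omega_\theta \;-\; \int_X \chi_\e\,\Delta(\cF,h_\cF)\wedge dd^c\Phi,
\]
where $\Phi$ is the interpolated potential making $\omega_\theta-C\omega_X=dd^c\Phi$. The first integrand is pointwise nonnegative by the standard pointwise Chern-Weil identity for HE bundles (Kobayashi's computation). For the second, the integration-by-parts result converts it into $\int\Delta(\cF,h_\cF)\wedge\partial\chi_\e\wedge\bar\partial\Phi$ which, after choosing $\chi_\e$ of the $\log\log$ type centered at the finitely many points $p_i$ and using that $\partial\varphi_p=O(|Z|)$ at each $p_i$ together with the $L^2$ bound on the curvature tensor, tends to zero as $\e\to 0$. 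This yields $\Delta(\cF)\cdot[\omega_X]\geq 0$, as desired.
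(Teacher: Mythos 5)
Your proposal follows essentially the same route as the paper: constructing the interpolating metric $\omega_\theta$, invoking the Guo-Phong-Song-Sturm estimates to obtain Green functions and the Harnack-type $L^\infty$ bound, producing the Hermite-Einstein metric from the construction in \cite{C++}, upgrading it to quasi-isometry with an orbifold metric, proving $\bar\partial$-closedness of $\Delta(\cF,h_\cF)$ via the $\log\log$-truncation argument, and concluding with the decomposition and limit computation. The outline is correct and tracks the paper's proof step by step.
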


If $X$ is projective, Theorem \ref{thm2} is well-known, 
see \cite[chap.10]{Uta92} for the surface case and \cite[Thm.6.1]{GKPT15}.  The technique is to reduce to the surface case by 
taking hyperplane sections. This is of course not possible in the K\"ahler case. 

\subsection*{Proof of Theorem \ref{thm1}} 

The issue in the proof of \cite[Thm.8.2]{CHP16}  is concerned with the proof of Equality (33).  To be precise, we are in the following situation. 
Let $X$ be a normal $\mathbb Q$-factorial compact K\"ahler threefold with klt singularities. Further, $X$ carries a
divisior $D \in \vert mK_X \vert$ with the following properties.
\begin{enumerate}
\item Set $B := {\rm Supp} D$. The pair $(X,B) $ is lc and $X \setminus B$ has terminal singularities.
\item $K_X + B$ is nef with $\nu(K_X + B) = 2$; further, $\kappa (X) = \kappa (K_X+B)$.
\item For all irreducible components $T \subset B$, we have $(K_X + B)_T \ne 0$.
\item $(K_X + B) \cdot K_X^2 \geq 0$.
\end{enumerate}
Then we claim that  $\kappa (X) \geq 1$. 

To prove this, we consider a terminal modification $\mu: X' \to X$, set $L : = m(K_X+B) $; $L' = \mu^*(L) $ 
 and things come down to prove Equality (33) in \cite{CHP16}, stating 
\begin{equation} 
\label{Eq1} L' \cdot (K_{X'}^2 + c_2(X')) \geq 0. 
\end{equation} 

To prove Equation (\ref{Eq1}), we claimed that we may assume that $X$ has canonical singularities in codimension two. This however is not true in general. 
Once Equation (\ref{Eq1}) is settled (which holds  when 
$X$ has canonical singularities), the proof given in \cite{CHP16} is complete.
Thus we have to treat the case that $X$ has (possibly) non-canonical singularities. 

Recall that a klt space has has quotient singularities in codimension two,
so we can use the orbifold Chern classes defined in an analytic context
in \cite{GK20}. Let $S \subset X$ be the finite locus where $X$ does not have quotient singularities. We 
define the intersection $\omega \cdot c_2(\cF)$ for a K\"ahler form $\omega $ on $X$ and a reflexive sheaf $\cF$ on $X$ which is a
$\Q$-vector bundle on $X \setminus S$ as follows:
the orbifold Chern class $c_2(\cF) \in H^4(X \setminus S,\mathbb R)$ extends uniquely to a class $\gamma $ on $X$, and we simply define
$$ \omega \cdot c_2(\cF) := [\omega] \cdot \gamma,$$
where $[\omega] \in H^2(X,\mathbb R)$ is the class of $\omega$. 

For the proof of Theorem \ref{thm1}, instead of taking a terminalization, we let $\mu: \wX \to X$ be a desingularization. 
We proceed following the lines of \cite[chapter14]{Uta92}. As in \cite[Lemma 14.3.1]{Uta92}, things come down to prove 
\begin{equation} \label{Eq2} L \cdot (K_X^2 + c_2(X)) \geq 0. \end{equation} 
Since
$$ 
L \cdot (K_X+B) \cdot B = 0,
$$ 
it suffices
by \cite[Lemma 14.3.2]{Uta92} to show that
\begin{equation} \label{Eq3} L \cdot c_2(\Omega_X(\log B)) \geq 0.\end{equation} 

To prove Equation \ref{Eq3}, we proceed as in \cite{CHP16} in an orbifold context:
as a first step note that Theorem \ref{thm2} yields the following orbifold version of
\cite[Theorem 7.2]{CHP16}.

\begin{theoremapp} \label{thm3} 
Let $(X, \omega)$ be a compact K\"ahler threefold with klt singularities, 
and let $\cF \rightarrow X$ be a non-zero reflexive coherent $\Q$-sheaf 
such that $\det \cF$ is $\Q$-Cartier. Suppose that there exists a pseudoeffective class $P \in N^1(X)$ such that
$$
L := c_1(\cF) + P
$$
is a nef class. Suppose furthermore that for all $0 < \varepsilon \ll 1$ the sheaf $\cF$ is $(L+\varepsilon \omega)$-generically nef. Then we have
$$
L \cdot c_2(\cF) \geq \frac{1}{2} (L \cdot c_1^2(\cF) - L^3).
$$
In particular, if $L \cdot c_1^2(\cF) \geq 0$ and $L^3=0$, then 
\begin{equation} \label{nonnegative}
L \cdot c_2(\cF) \geq 0.
\end{equation}
\end{theoremapp}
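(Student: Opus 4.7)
The plan is to use the Harder--Narasimhan filtration of $\cF$ with respect to the Kähler class $L_\varepsilon:=L+\varepsilon\omega$ (for small $\varepsilon>0$) to reduce to the $L_\varepsilon$-semistable case, apply the orbifold Bogomolov--Gieseker inequality of Theorem~\ref{thm2} (extended to $L_\varepsilon$-semistable reflexive $\mathbb Q$-sheaves via the Jordan--Hölder filtration as in Remark~\ref{semistable}) to each piece, assemble the resulting inequalities using the additivity of $2c_2-c_1^2=-2\,\mathrm{ch}_2$ under extensions, and let $\varepsilon\to 0$. The engine is that $\mathrm{ch}_2$ is additive while the Hodge index inequality controls each piece's contribution individually, and the pseudo-effectivity of $P$ produces the clean bound $L^3$ on the right-hand side.

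Fix $\varepsilon>0$ and consider the $L_\varepsilon$-HN filtration
$$0=\cF_0\subsetneq\cF_1\subsetneq\cdots\subsetneq\cF_k=\cF,$$
with $L_\varepsilon$-semistable quotients $\cQ_i:=\cF_i/\cF_{i-1}$, ranks $r_i$, and slopes $\mu_i:=L_\varepsilon^2\cdot c_1(\cQ_i)/r_i$ satisfying $\mu_1>\cdots>\mu_k$; this filtration stabilises for small $\varepsilon$. The $(L+\varepsilon\omega)$-generic nefness of $\cF$ forces $\mu_k\geq 0$, hence $\mu_i\geq 0$ for every $i$, and each $\cQ_i$ is then itself $L_\varepsilon$-generically nef. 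Passing to the reflexive hull $\cQ_i^{**}$---which is a reflexive $\mathbb Q$-sheaf on $X\setminus\{p_1,\dots,p_k\}$, inherits $L_\varepsilon$-semistability, has the same $c_1$ and rank, and satisfies $L_\varepsilon\cdot\Delta(\cQ_i^{**})\leq L_\varepsilon\cdot\Delta(\cQ_i)$---Theorem~\ref{thm2} combined with Remark~\ref{semistable} yields $L_\varepsilon\cdot\Delta(\cQ_i)\geq L_\varepsilon\cdot\Delta(\cQ_i^{**})\geq 0$, which I rewrite as
$$L_\varepsilon\cdot\bigl(2c_2(\cQ_i)-c_1(\cQ_i)^2\bigr)\;\geq\;-\tfrac{1}{r_i}\,L_\varepsilon\cdot c_1(\cQ_i)^2.$$
The Hodge index inequality on the compact Kähler threefold $X$ applied to the Kähler class $L_\varepsilon$ (so $L_\varepsilon^3>0$) and the class $c_1(\cQ_i)$ gives $L_\varepsilon\cdot c_1(\cQ_i)^2\leq (L_\varepsilon^2\cdot c_1(\cQ_i))^2/L_\varepsilon^3=r_i^2\mu_i^2/L_\varepsilon^3$, hence
$$L_\varepsilon\cdot\bigl(2c_2(\cQ_i)-c_1(\cQ_i)^2\bigr)\;\geq\;-\,r_i\mu_i^2/L_\varepsilon^3.$$

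Summing over $i$ and using the additivity identity $2c_2(\cF)-c_1(\cF)^2=\sum_i\bigl(2c_2(\cQ_i)-c_1(\cQ_i)^2\bigr)$ (equivalent to additivity of $\mathrm{ch}_2$ under short exact sequences), one obtains
$$L_\varepsilon\cdot\bigl(2c_2(\cF)-c_1(\cF)^2\bigr)\;\geq\;-\frac{1}{L_\varepsilon^3}\sum_i r_i\mu_i^2.$$
Now I exploit $c_1(\cF)=L-P$:
$$\sum_i r_i\mu_i=L_\varepsilon^2\cdot c_1(\cF)=L_\varepsilon^2\cdot L-L_\varepsilon^2\cdot P\;\leq\; L_\varepsilon^3,$$
since $L_\varepsilon^2\cdot L\leq L_\varepsilon^3$ (expand $L_\varepsilon=L+\varepsilon\omega$; the mixed intersections $L^{3-j}\omega^j$ with $j\geq 1$ are non-negative as $L$ is nef and $\omega$ Kähler) and $L_\varepsilon^2\cdot P\geq 0$ (Kähler squared against pseudo-effective). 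With $\mu_i\geq 0$, this forces $r_1\mu_1\leq L_\varepsilon^3$, so $\mu_i\leq\mu_1\leq L_\varepsilon^3$ for all $i$, and therefore $\sum_i r_i\mu_i^2\leq\mu_1\sum_i r_i\mu_i\leq (L_\varepsilon^3)^2$. Substituting gives $L_\varepsilon\cdot(2c_2(\cF)-c_1(\cF)^2)\geq -L_\varepsilon^3$, i.e.\ $L_\varepsilon\cdot c_2(\cF)\geq\tfrac12(L_\varepsilon\cdot c_1(\cF)^2-L_\varepsilon^3)$. Letting $\varepsilon\to 0$ yields the main inequality, and specialising to $L^3=0$ with $L\cdot c_1(\cF)^2\geq 0$ proves the final assertion.

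The hard part will be the technical verifications in the orbifold setting rather than the cohomological calculus above: the existence and local constancy (in $\varepsilon$) of the $L_\varepsilon$-HN filtration for a reflexive $\mathbb Q$-sheaf on a singular compact Kähler threefold, the fact that passage to reflexive hulls preserves both the $\mathbb Q$-sheaf structure on $X\setminus\{p_1,\dots,p_k\}$ and $L_\varepsilon$-semistability, and the validity of the additivity identity $2c_2(\cF)-c_1(\cF)^2=\sum_i\bigl(2c_2(\cQ_i)-c_1(\cQ_i)^2\bigr)$ for the orbifold Chern classes defined in \cite{GK20}. These points are essentially bookkeeping once one works within the framework of \cite{GK20} and the semistable extension of BG alluded to in Remark~\ref{semistable}, but they will constitute the bulk of a fully rigorous write-up.
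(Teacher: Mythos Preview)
Your proposal is correct and follows the same route that the paper implicitly invokes: the appendix does not give a separate proof of this statement but simply records it as the orbifold version of \cite[Theorem~7.2]{CHP16}, obtained by replacing the classical Bogomolov--Gieseker input there with Theorem~\ref{thm2}. Your write-up is precisely this argument (Harder--Narasimhan filtration with respect to $L_\varepsilon$, orbifold BG on the semistable graded pieces, Hodge index bound, summation via additivity of $c_1^2-2c_2$, and the pseudo-effectivity of $P$ to close the estimate), so nothing further is needed beyond the technical checks you already flag.
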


Now we aim to apply Theorem \ref{thm3} for the reflexive sheaf $\cF = \Omega_X(\log B)$ to prove Equation \ref{Eq3}. Therefore, we need to check generic nefness of $\cF$ which 
follows from the generic nefness of $\Omega_X$. 
This is done by the following adaption of \cite[Proposition 8.2]{CHP16}, thereby 
finalizing the proof of Theorem \ref{thm1}. 

\begin{proposition_app} \label{prop1}
Let $X$ be a normal non-algebraic compact K\"ahler space of dimension three with klt singularities such that $\kappa(X) \geq 0$.
Then $\Omega_X$ is generically nef with respect to any nef class $\alpha$, i.e. for every torsion-free quotient sheaf
$$
\Omega_X \rightarrow \mathcal Q \rightarrow 0,
$$
we have $\alpha^{2} \cdot c_1(\mathcal Q) \geq 0$.
\end{proposition_app} 

As a preparation we show a technical lemma:

\begin{lemma_app} \label{lemma-kappa}
Let $X$ be a normal non-algebraic compact K\"ahler space of dimension three with klt singularities such that $\kappa(X) \geq 0$.
Let $\holom{\mu}{ \wX}{X}$ be a resolution of singularities by a compact K\"ahler threefold. Then we have $\kappa( \wX) \geq 0$.
\end{lemma_app}

It is well-known that the statement is not true for projective threefolds: 
there are projective threefolds with numerically trivial canonical class that are klt and uniruled, in particular every resolution has Kodaira dimension $-\infty$.

\begin{proof} We can suppose without loss of generality that 
$\mu$ is a composition of blowups. Then the $\mu$-exceptional locus $E$ is a union of projective varieties: indeed the morphism is projective since it is a composition of blowups. Moreover $\mu(E)$ is a finite union of varieties of dimension at most one, so
$\mu(E)$ is projective.

Arguing by contradiction we assume that $\kappa (\wX) = - \infty$.
Since $\wX$ is a smooth compact K\"ahler threefold, this is equivalent
to $\wX$ being uniruled \cite[Cor.1.4]{a21}. 

Since $\wX$ is not algebraic, $\wX$ cannot be rationally connected.
In fact the MRC fibration is an almost holomorphic fibration
$$ 
\widehat f: \wX \dasharrow \widehat S 
$$
onto a non-projective surface \cite[Intro]{a25}. Since $E$ is a union of projective surfaces, it does not surject onto $\widehat S$.
Therefore $\widehat f$ descends to an almost holomorphic map $f: X \dasharrow \widehat S$ whose general fiber is a smooth rational curve. This contradicts $\kappa (X) \geq 0$. 
\end{proof}

\begin{proof} [Proof of Proposition \ref{prop1}]
Let $\pi: \wX \to X$ be a desingularisation by a compact K\"ahler manifold, and let
$\cK $ be the kernel of the induced epimorphism 
$$
(\pi^\star \Omega_X) / {\rm torsion} \to (\pi^\star \cQ) / {\rm torsion} \to 0. 
$$
Using the injective map $(\pi^\star \Omega_X) / {\rm torsion} \hookrightarrow \Omega_{\wX}$ we may view $\cK$ as a subsheaf
of $\Omega_{\wX}$, and we denote by $\hat \cK$ its saturation in $\Omega_{\wX}$. Set 
$$
\hat \cQ := \Omega_{\wX}/\hat \cK,
$$
then $\hat \cQ$ is a torsion-free quotient of $\Omega_{\wX}$ coinciding with $(\pi^\star \cQ) / {\rm torsion}$
in the complement of the exceptional locus. In particular we have
$$
\alpha^{2} \cdot c_1(\cQ) = (\pi^\star \alpha)^{2} \cdot c_1((\pi^\star \cQ) / {\rm torsion}) = (\pi^\star \alpha)^{2} \cdot c_1(\hat \cQ).
$$
Thus we are left to show that $(\pi^\star\alpha)^{2} \cdot c_1(\hat Q) \geq 0.$ 
To do this, we apply Enoki's theorem \cite[Theorem 1.4]{Enoki}, and need to write as $\mathbb Q$-divisors, 
$$ K_{\wX} = L + D$$
with $L$ nef and $D$ effective. By Lemma \ref{lemma-kappa} we have $\kappa (\wX) \geq 0$. Thus we can just set $L=0$ and $D$ an effective $\Q$-divisor such that $D \sim_\Q K_X$.
\end{proof}

\begin{remark_app}
In their recent preprints \cite{DO22, DO23} Das and Ou have extended Theorem \ref{thm-abundance} to log-canonical pairs, in particular they give an independent proof
of Theorem \ref{thm1}.
\end{remark_app}

\bibliographystyle{smfalpha}
\bibliography{biblio}

\newcommand{\etalchar}[1]{$^{#1}$}
\providecommand{\bysame}{\leavevmode ---\ }
\providecommand{\og}{``}
\providecommand{\fg}{''}
\providecommand{\smfandname}{\&}
\providecommand{\smfedsname}{\'eds.}
\providecommand{\smfedname}{\'ed.}
\providecommand{\smfmastersthesisname}{M\'emoire}
\providecommand{\smfphdthesisname}{Th\`ese}
\begin{thebibliography}{GPSS24b}

\bibitem[Agm60]{Agmon}
{\scshape S.~Agmon} -- {\og The {{\(L_p\)}} approach to the {Dirichlet}
  problem\fg}, \emph{Ann. Sc. Norm. Super. Pisa, Sci. Fis. Mat., III. Ser.}
  \textbf{13} (1960), p.~405--448 (English).

\bibitem[Art69]{Artin69}
{\scshape M.~Artin} -- {\og Algebraic approximation of structures over complete
  local rings\fg}, \emph{Inst. Hautes \'Etudes Sci. Publ. Math.} (1969),
  no.~36, p.~23--58.

\bibitem[BBE{\etalchar{+}}19]{BBEGZ19}
{\scshape R.~J. Berman, S.~Boucksom, P.~Eyssidieux, V.~Guedj {\normalfont
  \smfandname} A.~Zeriahi} -- {\og K\"{a}hler-{E}instein metrics and the
  {K}\"{a}hler-{R}icci flow on log {F}ano varieties\fg}, \emph{J. Reine Angew.
  Math.} \textbf{751} (2019), p.~27--89.

\bibitem[BCHM10]{BCHM}
{\scshape C.~Birkar, P.~Cascini, C.~Hacon {\normalfont \smfandname}
  J.~McKernan} -- {\og {Existence of minimal models for varieties of log
  general type}\fg}, \emph{J. Amer. Math. Soc.} \textbf{23} (2010),
  p.~405--468.

\bibitem[BS94]{BS94}
{\scshape S.~Bando {\normalfont \smfandname} Y.-T. Siu} -- {\og Stable sheaves
  and {Einstein}-{Hermitian} metrics\fg}, in \emph{Geometry and analysis on
  complex manifolds. Festschrift for Professor S. Kobayashi's 60th birthday},
  Singapore: World Scientific, 1994, p.~39--59 (English).

\bibitem[CC21]{CC21}
{\scshape X.~Chen {\normalfont \smfandname} J.~Cheng} -- {\og On the constant
  scalar curvature {K{\"a}hler} metrics. {I}: {A} priori estimates\fg},
  \emph{J. Am. Math. Soc.} \textbf{34} (2021), no.~4, p.~909--936 (English).

\bibitem[CGG22]{CGG}
{\scshape B.~Claudon, P.~Graf {\normalfont \smfandname} H.~Guenancia} -- {\og
  {Numerical characterization of complex torus quotients}\fg}, \emph{{Comment.
  Math. Helv.}} (2022), no.~4, p.~pp. 769–799.

\bibitem[CGGN22]{CGGN}
{\scshape B.~Claudon, P.~Graf, H.~Guenancia {\normalfont \smfandname}
  P.~Naumann} -- {\og K\"{a}hler spaces with zero first {C}hern class:
  {B}ochner principle, {A}lbanese map and fundamental groups\fg}, \emph{J.
  Reine Angew. Math.} \textbf{786} (2022), p.~245--275.

\bibitem[CGN{\etalchar{+}}23]{C++}
{\scshape J.~Cao, P.~Graf, P.~Naumann, M.~P{\u a}un, T.~Peternell {\normalfont
  \smfandname} X.~Wu} -- {\og {Hermite--Einstein metrics in singular
  settings}\fg}, Preprint
  \href{http://arxiv.org/abs/2303.08773}{arXiv:2303.08773}, 2023.

\bibitem[Che22]{Chen22}
{\scshape X.~Chen} -- {\og {Admissible Hermitian-Yang-Mills connections over
  normal varieties}\fg}, Preprint
  \href{http://arxiv.org/abs/2205.12305}{arXiv:2205.12305}, to appear in Math.
  Ann., 2022.

\bibitem[CHP16]{CHP16}
{\scshape F.~Campana, A.~H{\"o}ring {\normalfont \smfandname} T.~Peternell} --
  {\og Abundance for {K{\"a}hler} threefolds\fg}, \emph{Ann. Sci. {\'E}c. Norm.
  Sup{\'e}r. (4)} \textbf{49} (2016), no.~4, p.~971--1025 (English).

\bibitem[CHP23]{CHPad}
{\scshape F.~Campana, A.~Höring {\normalfont \smfandname} T.~Peternell} --
  {\og Erratum and addendum to the paper: Abundance for {K}\"ahler
  threefolds\fg}, Preprint
  \href{https://arxiv.org/abs/2304.10161}{arXiv:2304.10161}, 2023.

\bibitem[CW24]{CWcomplex}
{\scshape X.~Chen {\normalfont \smfandname} R.~A. Wentworth} -- {\og A
  {Donaldson}-{Uhlenbeck}-{Yau} theorem for normal varieties and semistable
  bundles on degenerating families\fg}, \emph{Math. Ann.} \textbf{388} (2024),
  no.~2, p.~1903--1935 (English).

\bibitem[DO23a]{DO22}
{\scshape O.~Das {\normalfont \smfandname} W.~Ou} -- {\og On the log abundance
  for compact {K}{\"a}hler $3$-folds\fg}, \emph{arXiv preprint, to appear in
  manuscripta mathematica} \textbf{2201.01202} (2023).

\bibitem[DO23b]{DO23}
\bysame , {\og On the log abundance for compact {K}{\"a}hler threefolds
  {II}\fg}, \emph{arXiv preprint} \textbf{2306.00671} (2023).

\bibitem[Don85]{Don85}
{\scshape S.~Donaldson} -- {\og {Anti self-dual Yang Mills connections over
  complex algebraic surfaces and stable vector bundles}\fg}, \emph{Proc. Lond.
  Math. Soc., III. Ser.} \textbf{50} (1985), p.~1--26.

\bibitem[EGZ09]{EGZ09}
{\scshape P.~Eyssidieux, V.~Guedj {\normalfont \smfandname} A.~Zeriahi} -- {\og
  {Singular K{\"a}hler-Einstein metrics}\fg}, \emph{{J. Amer. Math. Soc.}}
  \textbf{22} (2009), p.~607--639.

\bibitem[Eno88]{Enoki}
{\scshape I.~Enoki} -- {\og Stability and negativity for tangent sheaves of
  minimal {K}{\"a}hler spaces\fg}, in \emph{Geometry and analysis on manifolds
  ({K}atata/{K}yoto, 1987)}, Lecture Notes in Math., vol. 1339, Springer,
  Berlin, 1988, p.~118--126.

\bibitem[Fuj22]{Fujino22}
{\scshape O.~Fujino} -- {\og {Minimal model program for projective morphisms
  between complex analytic spaces}\fg}, Preprint
  \href{https://arxiv.org/abs/2201.11315}{arXiv:2201.11315}, 2022.

\bibitem[GK20]{GK20}
{\scshape P.~Graf {\normalfont \smfandname} T.~Kirschner} -- {\og {Finite
  quotients of three-dimensional complex tori}\fg}, \emph{Ann.~Inst.~Fourier
  (Grenoble)} \textbf{70} (2020), no.~2, p.~881--914.

\bibitem[GKKP11]{GKKP}
{\scshape D.~Greb, S.~Kebekus, S.~J. Kov{\'a}cs {\normalfont \smfandname}
  T.~Peternell} -- {\og Differential forms on log canonical spaces\fg},
  \emph{Publ. Math. Inst. Hautes {\'E}tudes Sci.} (2011), no.~114, p.~87--169.

\bibitem[GKP16]{GKP16}
{\scshape D.~Greb, S.~Kebekus {\normalfont \smfandname} T.~Peternell} -- {\og
  \'{E}tale fundamental groups of {K}awamata log terminal spaces, flat sheaves,
  and quotients of abelian varieties\fg}, \emph{Duke Math. J.} \textbf{165}
  (2016), no.~10, p.~1965--2004.

\bibitem[GKP22]{GKP22}
\bysame , {\og Projective flatness over klt spaces and uniformisation of
  varieties with nef anti-canonical divisor\fg}, \emph{J. Algebr. Geom.}
  \textbf{31} (2022), no.~3, p.~467--496 (English).

\bibitem[GKPT19]{GKPT15}
{\scshape D.~Greb, S.~Kebekus, T.~Peternell {\normalfont \smfandname} B.~Taji}
  -- {\og The {M}iyaoka-{Y}au inequality and uniformisation of canonical
  models\fg}, \emph{{Ann. Scient. Éc. Norm. Sup. (4)}} \textbf{52} (2019),
  no.~6, p.~1487–1535.

\bibitem[GPSS23]{GPSS23}
{\scshape B.~Guo, D.~Phong, J.~Song {\normalfont \smfandname} J.~Sturm} -- {\og
  {Sobolev inequalities on Kähler spaces}\fg}, Preprint
  \href{https://arxiv.org/abs/2311.00221}{arXiv:2311.00221}, 2023.

\bibitem[GPSS24a]{GPSS22}
{\scshape B.~Guo, D.~H. Phong, J.~Song {\normalfont \smfandname} J.~Sturm} --
  {\og Diameter estimates in {K{\"a}hler} geometry\fg}, \emph{Commun. Pure
  Appl. Math.} \textbf{77} (2024), no.~8, p.~3520--3556 (English).

\bibitem[GPSS24b]{GPSS24}
\bysame , {\og Diameter estimates in {K{\"a}hler} geometry. {II}: {Removing}
  the small degeneracy assumption\fg}, \emph{Math. Z.} \textbf{308} (2024),
  no.~3, p.~7 (English), Id/No 43.

\bibitem[GPT23]{GPT23}
{\scshape B.~Guo, D.~Phong {\normalfont \smfandname} F.~Tong} -- {\og On
  {{\(L^\infty\)}} estimates for complex {Monge}-{Amp{\`e}re} equations\fg},
  \emph{Ann. Math. (2)} \textbf{198} (2023), no.~1, p.~393--418 (English).

\bibitem[GT77]{Gilb}
{\scshape D.~Gilbarg {\normalfont \smfandname} N.~Trudinger} -- \emph{Elliptic
  partial differential equations of second order}, Springer-Verlag, 1977.

\bibitem[GT24]{GuedjTo24}
{\scshape V.~Guedj {\normalfont \smfandname} T.~D. Tô} -- {\og {Kähler
  families of Green's functions}\fg}, Preprint
  \href{http://arxiv.org/abs/2405.17232}{arXiv:2405.17232}, to appear in J.
  {\'E}c. Polytech. Math., 2024.

\bibitem[GZ12]{GZ11}
{\scshape V.~Guedj {\normalfont \smfandname} A.~Zeriahi} -- {\og Stability of
  solutions to complex {M}onge-{A}mp\`ere equations in big cohomology
  classes\fg}, \emph{Math. Res. Lett.} \textbf{19} (2012), no.~5,
  p.~1025--1042.

\bibitem[HP15]{a25}
{\scshape A.~H{\"o}ring {\normalfont \smfandname} T.~Peternell} -- {\og Mori
  fibre spaces for {K}\"ahler threefolds\fg}, \emph{J. Math. Sci. Univ. Tokyo}
  \textbf{22} (2015), no.~1, p.~219--246.

\bibitem[HP16]{a21}
{\scshape A.~H\"oring {\normalfont \smfandname} T.~Peternell} -- {\og Minimal
  models for {K}\"ahler threefolds\fg}, \emph{Invent. Math.} \textbf{203}
  (2016), no.~1, p.~217--264.

\bibitem[Kob87]{Koba}
{\scshape S.~Kobayashi} -- \emph{{Differential geometry of complex vector
  bundles.}}, {Princeton, NJ: Princeton University Press; Tokyo: Iwanami Shoten
  Publishers}, 1987 (English).

\bibitem[Kwc92]{Uta92}
{\scshape J.~Koll{\'a}r~(with 14~coauthors)} -- \emph{Flips and abundance for
  algebraic threefolds}, Soci\'et\'e Math\'ematique de France, Paris, 1992,
  Papers from the Second Summer Seminar on Algebraic Geometry held at the
  University of Utah, Salt Lake City, Utah, August 1991, Ast{\'e}risque No. 211
  (1992).

\bibitem[LT19]{LiTian19}
{\scshape C.~Li {\normalfont \smfandname} G.~Tian} -- {\og Orbifold regularity
  of weak {K}\"{a}hler-{E}instein metrics\fg}, in \emph{Advances in complex
  geometry}, Contemp. Math., vol. 735, Amer. Math. Soc., [Providence], RI,
  [2019] \copyright 2019, p.~169--178.

\bibitem[MP97]{MP97}
{\scshape Y.~Miyaoka {\normalfont \smfandname} T.~Peternell} -- \emph{Geometry
  of higher dimensional algebraic varieties}, DMV Semin., vol.~26, Basel:
  Birkh{\"a}user, 1997 (English).

\bibitem[Mum83]{MR717614}
{\scshape D.~Mumford} -- {\og Towards an enumerative geometry of the moduli
  space of curves\fg}, in \emph{Arithmetic and geometry, {V}ol. {II}}, Progr.
  Math., vol.~36, Birkh{\"a}user Boston, Boston, MA, 1983, p.~271--328.

\bibitem[Ou24]{Ou}
{\scshape W.~Ou} -- {\og {Orbifold modifications of complex analytic
  varieties}\fg}, Preprint
  \href{http://arxiv.org/abs/2401.07273}{arXiv:2401.07273}, 2024.

\bibitem[Pet01]{Pet01}
{\scshape T.~Peternell} -- {\og Towards a {M}ori theory on compact {K}\"ahler
  threefolds. {III}\fg}, \emph{Bull. Soc. Math. France} \textbf{129} (2001),
  no.~3, p.~339--356.

\bibitem[Ros68]{Rossi68}
{\scshape H.~Rossi} -- {\og Picard variety of an isolated singular point\fg},
  \emph{Rice Univ. Stud.} \textbf{54} (1968), no.~4, p.~63--73 (English).

\bibitem[Sav07]{Savin}
{\scshape O.~Savin} -- {\og Small perturbation solutions for elliptic
  equations\fg}, \emph{Commun. Partial Differ. Equations} \textbf{32} (2007),
  no.~4, p.~557--578 (English).

\bibitem[UY86]{UY}
{\scshape K.~Uhlenbeck {\normalfont \smfandname} S.~Yau} -- {\og {On the
  existence of Hermitian-Yang-Mills connections in stable vector bundles.}\fg},
  \emph{Commun. Pure Appl. Math.} \textbf{39} (1986), p.~S257--S293.

\bibitem[Vu24]{Vu24}
{\scshape D.-V. Vu} -- {\og {Uniform diameter estimates for Kähler
  metrics}\fg}, Preprint
  \href{http://arxiv.org/abs/2405.14680}{arXiv:2405.14680}, 2024.

\bibitem[Yau75]{Yau}
{\scshape S.-T. Yau} -- {\og {Harmonic functions on complete Riemannian
  manifolds.}\fg}, \emph{Commun. Pure Appl. Math.} \textbf{28} (1975),
  p.~201--228.

\end{thebibliography}

\end{document}